\numberwithin{equation}{section}
\renewcommand{\P}{\mathbb{P}}
\newcommand{\D}{\mathbb{D}}
\renewcommand{\S}{\mathbf{S}}
\newcommand{\Sph}{\mathbb{S}}
\newcommand{\bF}{\mathbf{F}}
\newcommand{\cB}{\mathbf{B}}
\newcommand{\M}{\mathbb{M}}
\newcommand{\R}{\mathbb{R}}
\newcommand{\E}{\mathbb{E}}
\newcommand{\1}{\mathbf{1}}
\newcommand{\bv}{\mathbf{v}}
\newcommand{\ol}{\overline}
\newcommand{\sig}{y}
\newcommand{\eps}{\varepsilon}
\newtheorem{thm}{Theorem}[section]
\newtheorem{lemma}[thm]{Lemma}
\newtheorem{corol}[thm]{Corollary}
\newtheorem{propo}[thm]{Proposition}
\theoremstyle{definition}
\newtheorem{defin}[thm]{Definition}
\newtheorem{remark}[thm]{Remark}
\title{Random reflections in a high dimensional tube}
\author{Krzysztof Burdzy and Tvrtko Tadi\'c}
\address{KB: Department of Mathematics, Box 354350, University of Washington, Seattle, WA 98195, USA}
\email{burdzy@math.washington.edu}
\address{TT: Department of Mathematics, University of Zagreb, Bijenička cesta 30,
10000 Zagreb, Croatia \and Microsoft Corporation (City Center Plaza Bellevue), One Microsoft Way, Redmond, WA 98052}
\email{tvrtko@math.hr}
\thanks{KB: Research supported in part by NSF Grant DMS-1206276. TT: Research supported in part by Croatian Science Foundation grant 3526.}
\keywords{random reflections, stopped random walks, tail of the arccosine distribution,  undershoot, overshoot}
\subjclass[2010]{60G50, 60K05, 37D50, 37H99}
\begin{document}

\maketitle

\begin{abstract}
We consider light ray reflections in $d$-dimensional semi-infinite tube, for $d\geq 3$, made of Lambertian material.
The source of light is placed far away from the exit, and the light ray is assumed to reflect so that the distribution of the direction of the reflected light ray has the density proportional to the cosine of the angle with the normal vector. We present new results on the exit distribution from the tube,
and generalizations of some theorems from an earlier article, where the dimension was limited to $d=2$ and $3$.

\end{abstract}
\section{Introduction}

This paper is a continuation of \cite{laser}, where Lambertian reflections in a semi-infinite tube were introduced and studied. A source of light was placed far away from the tube opening and the light reflected from the walls of the tube according to the Lambertian law (also known as the Knudsen law in the theory of gases). The main results were concerned with the properties of the light ray at the exit time. The paper \cite{laser} was mostly concerned with the 2-dimensional case although it contained some results in the 3-dimensional case. 
This paper is devoted to the $d$-dimensional case, for any $d\geq 3$. The  exit distribution is dramatically different in two dimensions from three dimensions. We will show that the exit distributions are similar in the qualitative sense for all $d\geq 3$.

On the technical side, we will derive some new results about overshoot of random walks, arccosine distribution and products of random variables having this distribution, and distributions with regularly varying tails.

One of our main results is Theorem \ref{thm:uniformFromAway} which states that if the last position of the ray was at a large distance 
$\beta$ form the exit of the tube, the exiting point of the ray is (approximately) uniformly distributed on the exiting disc.
\begin{figure}\label{j26.2}
\begin{center}
 \includegraphics[width=8cm]{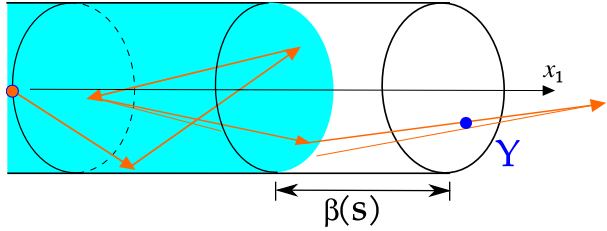}
\caption{Illustration of  Theorem \ref{thm:uniformFromAway}.}\label{sl3blue}
\end{center}
\end{figure}
This is illustrated in Figure \ref{sl3blue}. If we apply a filter on the exiting disc so that only the rays coming from the blue part of the tube 
are shown, the light on the exiting disc will be uniformly distributed. Results of a simulation 
illustrated in Figure \ref{simulation} show that this is practically true when  $\beta=3$ and the source of light is at the distance $50$ from the exiting disc.

Another of our main results is Theorem \ref{thm:u20_1}, the generalization of the brightness singularity result from \cite{laser}. 
Suppose that the light ray starts $s$ units from the opening of the tube.
Let $\bv_s  $ be the unit
vector representing the direction of the light ray at the exit time 
assuming it leaves the tube at the center of the opening.
Let $\cB(r) = \{(x_1,\ldots, x_d): x_1^2 + x_2^2 +\ldots+ x_d^2 = 1, x_2^2+\ldots +x_d^2 \leq r^2 , x>0 \}$ denote a ball on the unit sphere.
A somewhat informal statement of Theorem \ref{thm:u20_1} is
\begin{thm}
For some constant $c>0$ and any $0 < r_1 < r_2 < 1$,
\begin{align*}
\lim_{s\to \infty}
s^{d-2} \P\left(\bv_s \in \cB\left( \frac{r_2}{s}\right) \setminus \cB\left( \frac{r_1}{s}\right)\right) 
=    c(r^{d-2}_2-r^{d-2}_1).
\end{align*}
\end{thm}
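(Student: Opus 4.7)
The plan is to reduce the event $\{\bv_s\in\cB(r/s)\}$ to a tail event for $\beta_s$, the longitudinal distance from the last reflection point to the exit plane, and then apply a tail estimate of the form $\P(\beta_s>t)\sim c_*\,t^{-(d-2)}$. Write the last reflection as $(-\beta_s,\omega_s)$ with $\omega_s\in\Sph^{d-2}$, and the exit point as $e_s\in\D$. The exit direction is the unit vector along $(\beta_s,\,e_s-\omega_s)$, whose transverse magnitude equals $|e_s-\omega_s|/\sqrt{\beta_s^{\,2}+|e_s-\omega_s|^2}$. A short geometric calculation shows $\{\bv_s\in\cB(r/s)\}$ is equivalent, up to a factor $1+O(1/s^2)$, to $\{|e_s-\omega_s|\le r\beta_s/s\}$.

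Next, by Theorem~\ref{thm:uniformFromAway} the exit point $e_s$ is asymptotically uniform on $\D$ once $\beta_s$ is large. Set $F(\delta):=\P_{e\sim\mathrm{Unif}(\D)}(|e-\omega|\le\delta)$; rotational symmetry makes $F$ independent of $\omega\in\Sph^{d-2}$. One checks $F(\delta)\sim C_d\,\delta^{d-1}$ as $\delta\to 0^+$ (a $\delta$-ball around $\omega$ meets $\D$ in essentially half a $(d-1)$-ball) and $F(\delta)=1$ for $\delta\ge 2$. Combining,
\begin{equation*}
\P\bigl(\bv_s\in\cB(r_2/s)\setminus\cB(r_1/s)\bigr)\sim\E\!\left[F(r_2\beta_s/s)-F(r_1\beta_s/s)\right].
\end{equation*}

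The main analytical step is to prove $\P(\beta_s>t)\sim c_*\,t^{-(d-2)}$ as $t\to\infty$, uniformly for $t\asymp s$. The longitudinal coordinates of successive reflections form a symmetric, mean-zero random walk whose step density behaves like $c|u|^{-(d+1)}$ at infinity: Lambertian weighting contributes a factor $\eps^{d-1}$ for directions within transverse angle $\eps$ of the tube axis, while such directions produce longitudinal steps of order $1/\eps$, so the change of variables yields the stated density tail. A reflection at longitudinal distance $\beta$ leads directly to an exit with probability $\sim c/\beta^d$, so the reflection process is effectively a heavy-tailed random walk killed at a position-dependent rate $\sim c/\beta^d$. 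I expect the $t^{-(d-2)}$ tail for $\beta_s$ to follow by an integrated-tails / Dynkin--Lamperti type argument for such killed walks, adapting the overshoot/undershoot machinery advertised in the abstract.

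Finally, substituting the density $f_{\beta_s}(t)\sim c_*(d-2)\,t^{-(d-1)}$ into the displayed expression above and changing variables $u=t/s$ yields
\begin{equation*}
 s^{d-2}\,\E\bigl[F(r_2\beta_s/s)-F(r_1\beta_s/s)\bigr]\longrightarrow c_*(d-2)\int_0^\infty \bigl(F(r_2u)-F(r_1u)\bigr)u^{-(d-1)}\,du,
\end{equation*}
and one further substitution $w=r_i u$ in each summand factors out $r_i^{d-2}\int_0^\infty F(w)w^{-(d-1)}\,dw$, giving the desired limit $c(r_2^{d-2}-r_1^{d-2})$ with an explicit positive constant $c$. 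The conditioning on ``exit at the center of the opening'' in the informal statement is handled rigorously by working with the joint density of $(\beta_s,\omega_s,e_s)$ and marginalizing via Theorem~\ref{thm:uniformFromAway}. The main obstacle throughout is the tail estimate in the third paragraph: making it precise, with constant independent of $s$ in the regime $t\asymp s$, is delicate because ``the last reflection before exit'' is not a standard first-passage undershoot of the longitudinal random walk---the killing probability depends on the transverse geometry of each step, not just the longitudinal coordinate.
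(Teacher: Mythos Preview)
Your geometric reduction is correct: once you condition on $Y_s$ at the center (which is what the informal phrase ``assuming it leaves the tube at the center'' means, and what the rigorous Theorem~\ref{thm:u20_1} makes precise via the event $Y_s\in B_\delta(0)$), the direction event $\{\bv_s\in\cB(r_2/s)\setminus\cB(r_1/s)\}$ becomes, up to negligible error, $\{U_s\in(s/r_2,s/r_1)\}$. The paper does exactly this in \eqref{j22.1}--\eqref{j22.4}. However, your $F$-integral setup computes the \emph{unconditional} direction law by integrating over all exit points $e_s$, which is a different and harder problem; note also that Theorem~\ref{thm:uniformFromAway} gives uniformity of $Y_s$ only after averaging over $\omega_s$, while conditional on $(\beta_s,\omega_s)$ the limiting law is $\tau_\infty\circ U^{\omega_s*}$, not uniform (Remark~\ref{j13.4}). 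For the conditional-at-center statement the relevant input is not your $F$ but the single-step density of $\widetilde Z_u$ at $0$, supplied by Proposition~\ref{propo:densityAsymptotics}.

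The gap you flag---the undershoot tail $\P(\beta_s>t)\sim c_* t^{-(d-2)}$ for $t\asymp s$---is real, and the paper sidesteps it entirely. Its key object is instead the \emph{occupation measure} $\M_s(A)=\E[\#\{k<N_s:S^{x_1}_k-s\in A\}]$, for which one has the clean limit $s^{-2}\M_s(-sa_2,-sa_1)\to (a_2^2-a_1^2)/(2\E X_1^2)$ (Lemma~\ref{u22.2}). The joint probability $\P\bigl(U_s\in(s\alpha_1,s\alpha_2),\,Y_s\in B_\delta(0)\bigr)$ is then obtained in one stroke (Lemma~\ref{lemma:aproxProbability}) by integrating the single-step probability $\P(\widetilde Z_u\in B_\delta(0))\sim \delta^{d-1}/(4\pi^{d-2}u^d)$ against $\M_s$; this packages ``the walk is at distance $u$,'' ``the next step crosses level $s$,'' and ``it lands near $0$'' into one factor, so the marginal law of $U_s$ is never needed. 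Your tail estimate \emph{does} follow from this machinery via the identity $\P(U_s\in du)=\P(X>u)\,\M_s(-du)$ together with Theorem~\ref{the:theorem}, but proving it that way and then re-integrating against the exit density just reconstructs Lemma~\ref{lemma:aproxProbability} by a detour.
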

Consider an observer at the center of the opening of the tube, looking towards the interior of the tube.
The theorem  says that small annuli at the center of the field of vision,
with the area of magnitude $1/s^{d-1}$, receive about $1/s^{d-2}$ units of light. Hence, the apparent brightness is about $s$ at the distance $1/s$ from the center, if the light source is $s$ units away from the opening. 
This means that the surface of the tube does not appear to be Lambertian, i.e., the surface does not have uniform apparent brightness. This can be explained by the fact that not all parts of the surface of the tube  receive the  same amount of light.

A brief review of  literature on random  reflections is given
in \cite{laser}; we only mention here some of the papers in this area: 
\cite{ABS, comets1,comets2,comets3, evans}.

The paper is organized as follows. We present some results on random walks 
and arccosine distribution in Section \ref{sec:prel}.
Section \ref{sec:description} contains a precise definition of Lambertian direction for a light ray and contains estimates of the exit distribution of the light ray from the tube in case there were no reflections inside the tube. The construction of the process of reflected light ray is given in 
Section \ref{sec:construction}. In the next section, Section \ref{sec:tail}, we included some precise results on the tail of the distribution of a single   flight of light ray between reflections. Section \ref{sec:exit} contains results on the exit distribution in the case when the last reflection is far from the exit, and the last section, Section \ref{sec:bright}, contains the rigorous statement and the proof of the brightness singularity result alluded to above. 

\section{Preliminaries}\label{sec:prel}

\subsection{Stopped random walks}
We will study a random walk $\{S_n, n\geq 0\}$, with $S_0=0$ and $S_n=S_{n-1}+X_{n}$ for $n\geq 1$,
where $\{X_n, n\geq 1\}$ is an i.i.d. sequence.

For $s>0$ we let
\begin{align}\label{eq:NsMk} 
N_s &= \inf\{n>0: S_n>s\},\\
O_s&=S_{N_s}-s, \qquad U_s=s-S_{N_{s}-1}.\nonumber
\end{align}
We call $O_s$ the overshoot and $U_s$ the undershoot 
of 
the random walk $S_n$ at $s$.

\begin{defin}
For a function $h:\R^+\to \R^+$ we say that it is regularly varying with exponent (index)
$\alpha$ if 
\begin{equation}
\lim_{t\to\infty}h(xt)/h(t)=x^{\alpha} \label{limit1} 
\end{equation}
 for $x>0$. A function $h$ is 
called slowly varying if $\alpha =0$.
\end{defin}

It is well known that $h$ is a 
regularly varying function  with index $\alpha$ if and only if it is of the form $h(x) = x^\alpha L(x)$ where  $L$ is a slowly varying function.

We will use the notation $f\sim g$ to indicate that
$\lim_{x\to a} f(x)/g(x)=1$, where $a$ will depend on the context.

The following  theorem can be found in  \cite[Thm.~1.5.2]{regularVariation}.

\begin{thm}\label{thm:limunf}
Suppose that $h:\R^+\to \R^+$ is regularly varying with index $\alpha<0$. Then for every $a>0$,
the limit in $(\ref{limit1})$ is uniform in $x\in[a,\infty)$.
\end{thm}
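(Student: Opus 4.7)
The plan is to reduce the statement to the uniform convergence theorem for slowly varying functions on compact sets, and then use the negativity of $\alpha$ to upgrade compactness to $[a,\infty)$. Write $h(x)=x^{\alpha}L(x)$ with $L$ slowly varying, so that
\[
\frac{h(xt)}{h(t)} - x^{\alpha} = x^{\alpha}\left(\frac{L(xt)}{L(t)} - 1\right).
\]
Since $\alpha<0$, the prefactor $x^{\alpha}$ is bounded by $a^{\alpha}$ on $[a,\infty)$, so it suffices to control $L(xt)/L(t)-1$. However, this quantity cannot in general be made uniformly small on the unbounded set $[a,\infty)$ (witness $L(x)=\log x$), so we split the argument at some large threshold $M$.

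First, I would establish (or invoke) the classical uniform convergence theorem for slowly varying functions on compact sets: for any $0<a\le b<\infty$, $L(xt)/L(t)\to 1$ uniformly on $[a,b]$. The standard route is to pass to $g(u):=\log L(e^{u})$, which satisfies $g(u+v)-g(u)\to 0$ for each fixed $v$, then argue by contradiction using a Baire/measure-theoretic argument (or Karamata's representation $L(x)=c(x)\exp\int_{1}^{x}\eta(u)/u\,du$ with $c(x)\to c>0$ and $\eta(u)\to 0$) to show that if failure of uniformity occurred, one could find a set of positive measure of parameters on which $g$ oscillates, contradicting pointwise convergence. This is the step I expect to be the real engine, and it is where \cite[Thm.~1.2.1]{regularVariation} is used. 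Multiplying by the bounded factor $x^{\alpha}$ then yields the theorem on every compact $[a,M]$.

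Second, I would handle the tail $x\in[M,\infty)$ via a Potter-type bound. Fix $\delta>0$ with $\alpha+\delta<0$. By \cite[Thm.~1.5.6]{regularVariation} there exist $T<\infty$ and $C<\infty$ so that for all $t\ge T$ and $x\ge 1$,
\[
\frac{h(xt)}{h(t)} \;\le\; C\, x^{\alpha+\delta}.
\]
Since $\alpha+\delta<0$, the right side tends to $0$ as $x\to\infty$, uniformly in $t\ge T$; and of course $x^{\alpha}\to 0$ as well. Given $\eps>0$, choose $M=M(\eps)$ so that $C x^{\alpha+\delta}<\eps/2$ and $x^{\alpha}<\eps/2$ for all $x\ge M$; this makes $|h(xt)/h(t)-x^{\alpha}|<\eps$ uniformly on $[M,\infty)$ for all large $t$.

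Combining the two ranges $[a,M]$ and $[M,\infty)$ gives uniform convergence on $[a,\infty)$. The genuinely difficult step is the first one, the uniform convergence theorem for slowly varying functions on compact intervals; the extension to $[a,\infty)$ is essentially bookkeeping once Potter's bound is available, and Potter's bound itself follows from the compact uniformity together with submultiplicativity of $\sup_{x\in[1,2]}L(xt)/L(t)$.
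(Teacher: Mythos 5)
Your proof is correct, and it is essentially the proof of Theorem~1.5.2 in Bingham--Goldie--Teugels. The paper, however, does not prove this theorem at all: it states it and cites \cite[Thm.~1.5.2]{regularVariation} directly, so there is no ``paper's own proof'' to compare against. Your reconstruction of the cited argument is sound: writing $h(xt)/h(t)-x^{\alpha}=x^{\alpha}\bigl(L(xt)/L(t)-1\bigr)$, using the bounded prefactor $x^{\alpha}\le a^{\alpha}$ together with the compact uniform convergence theorem for slowly varying functions on $[a,M]$, and then disposing of the tail $[M,\infty)$ via Potter's bound $h(xt)/h(t)\le Cx^{\alpha+\delta}$ with $\alpha+\delta<0$ so that both $h(xt)/h(t)$ and $x^{\alpha}$ are individually less than $\eps/2$ for $x\ge M$ and $t$ large. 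The only bookkeeping point worth making explicit is that $M$ is chosen first, depending only on $\eps$ (since the Potter constants $C,T$ do not depend on $M$), and only then is the $t$-threshold for compact uniformity on $[a,M]$ chosen; you handled this correctly by fixing $\delta$ and $M$ before invoking the compact case.
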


\begin{thm}\label{thm:overUnderShoot}
Let $S_0=0$, $S_n=X_1+\ldots+X_n$, where $(X_n)$ is a sequence of continuous i.i.d. random variables
such that $x\mapsto \P(X_1>x)$ is regularly varying with index $-\alpha<0$. Then for $\varepsilon\in (0,1)$ and $t\in [0,1]$,
\begin{equation}\label{general:limit:result}
 \lim_{s\to\infty}\P\left(\frac{U_s}{U_s+O_s}\leq t \mid S_{N_s-1}\leq s(1-\varepsilon)\right)=t^{\alpha}.
\end{equation}

If $\P(X_1<0)>0$ then $(\ref{general:limit:result})$ is true for all $\varepsilon >0$.
\end{thm}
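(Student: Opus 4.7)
The plan is to exploit the renewal/independence structure of the walk. The key observation is that conditional on the undershoot $U_s = u$, the crossing step $X_{N_s} = U_s + O_s$ has the conditional distribution of $X_1$ given $X_1 > u$. To make this precise, decompose over $\{N_s = n\}$ and use that $X_n$ is independent of $X_1,\ldots,X_{n-1}$. This yields the joint formula
\begin{equation*}
\P(U_s \in du,\ X_{N_s} \in dv) \;=\; \1_{\{v > u\}}\,\P(X_1 \in dv)\,\mu_s(du),
\end{equation*}
where $\mu_s(du) = \sum_{n\geq 1}\P(s - S_{n-1}\in du,\ N_s \geq n)$. Integrating out $v$ gives $\P(U_s \in du) = G(u)\,\mu_s(du)$ with $G(u) := \P(X_1 > u)$, confirming that the conditional law of $X_{N_s}$ given $U_s = u$ is that of $X_1$ conditioned on $\{X_1 > u\}$.

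Next, since $t \in [0,1]$ implies $1/t \geq 1$, the event $\{U_s/(U_s+O_s) \leq t\}$ coincides with $\{X_{N_s} \geq U_s/t\}$, which is stronger than $\{X_{N_s} > U_s\}$. By the previous step,
\begin{equation*}
\P\bigl(U_s/(U_s+O_s) \leq t \,\big|\, U_s = u\bigr) \;=\; \frac{G(u/t)}{G(u)}.
\end{equation*}
The regular variation of $G$ with index $-\alpha$ gives $G(u/t)/G(u) \to t^{\alpha}$ as $u \to \infty$ (with the boundary case $t=0$ handled separately), and monotonicity of $G$ together with $1/t \geq 1$ bounds the ratio in $[0,1]$.

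Rewriting the conditioning event $\{S_{N_s-1} \leq s(1-\varepsilon)\}$ as $\{U_s \geq s\varepsilon\}$ and integrating, one obtains
\begin{equation*}
\P\!\left(\frac{U_s}{U_s+O_s}\leq t \;\middle|\; U_s \geq s\varepsilon\right)
\;=\; \E\!\left[\frac{G(U_s/t)}{G(U_s)} \;\middle|\; U_s \geq s\varepsilon\right].
\end{equation*}
On the conditioning event $U_s \geq s\varepsilon \to \infty$, so bounded convergence yields the limit $t^{\alpha}$. For the conditioning to be legitimate one needs $\P(U_s \geq s\varepsilon) > 0$; when $\varepsilon \in (0,1)$ this follows from $\P(U_s \geq s\varepsilon) \geq \P(X_1 > s) = G(s) > 0$, while if $\P(X_1 < 0) > 0$ the walk can drop below zero so $U_s$ can exceed $s$, giving positivity (and allowing the same proof) for every $\varepsilon > 0$.

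The only real obstacle is the rigorous joint-law decomposition in the first paragraph; once that is set up, the argument reduces to a one-line application of regular variation and dominated convergence.
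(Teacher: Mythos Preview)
Your argument is correct and is essentially the paper's proof repackaged in conditional-law language: both decompose over $\{N_s=k\}$, use independence of $X_k$ from $S_0,\ldots,S_{k-1}$ to extract the ratio $G(u/t)/G(u)$ with $u=s-S_{k-1}$, and then invoke regular variation. One caveat: the label ``bounded convergence'' in your last step is not quite right, since the conditioning measure itself varies with $s$; what actually makes the step work is that $G(u/t)/G(u)\to t^\alpha$ as $u\to\infty$, so on $\{U_s\ge s\varepsilon\}$ the ratio is uniformly within any prescribed $\eta$ of $t^\alpha$ once $s$ is large---this is exactly how the paper justifies it, citing the uniform convergence theorem for regularly varying tails (Theorem~\ref{thm:limunf}).
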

\remark Statement $(\ref{general:limit:result})$ is written in the form that we will use later. It can be reformulated as follows. For $x\geq 1$,
$$\lim_{s\to\infty} \P\left(\frac{X_{N_s}}{U_s}>x \mid U_s\geq \varepsilon s\right)=x^{-\alpha}.$$
\begin{proof}[Proof of Theorem \ref{thm:overUnderShoot}]
We need to check whether $\P(S_{N_s-1}\leq s(1-\varepsilon))> 0$ for $s>0$ so that $(\ref{general:limit:result})$ makes sense.
For $\varepsilon\in(0,1)$, we have 
\begin{align*}
\P(S_{N_s-1}\leq s(1-\varepsilon))
\geq \P(S_0\leq s(1-\varepsilon),S_1>s)=\P(X_1>s)>0.
\end{align*}
If $\P(X_1<0)>0$, then there exist $\delta_1>\delta_2 >0$ such that $\beta := \P(-\delta_1< X_1<-\delta_2)>0$. For any $\varepsilon \geq 1$, there exists 
$m$ such that $m\delta_2 > s(\varepsilon-1)$. Then 
\begin{align*}
 \P(S_{N_s-1}\leq s(1-\varepsilon))&\geq \P(S_{m+1}>s, S_m\leq s(1-\varepsilon),S_{m-1}<s, \ldots S_1<s)\\
& \geq \P(-\delta_1< X_1<-\delta_2,\ldots, -\delta_1< X_m<-\delta_2, X_{m+1}>s+m\delta_1)\\
&=\beta^m \P(X_{1}>s+m\delta_1)>0. 
\end{align*}

By definition we have,
\begin{align*}
\P&\left(\frac{U_s}{U_s+O_s}\leq t, S_{N_s-1}\leq s(1-\varepsilon)\right)\\
&=\P\left(tX_{N_s}\geq (s-S_{N_s-1}), S_{N_s-1}\leq s(1-\varepsilon)\right)\\
&=\sum_{k=1}^{\infty}\P\left(tX_{k}\geq (s-S_{k-1}), S_{k-1}\leq s(1-\varepsilon), N_s=k\right)\\
&=\sum_{k=1}^{\infty}\P\left(X_{k}\geq (s-S_{k-1})/t, S_{k-1}\leq s(1-\varepsilon), S_{k-2}<s,\ldots, S_1<s\right)\\
&=\sum_{k=1}^{\infty}\int_{-\infty}^{s(1-\varepsilon)}\P\left(X_{k}\geq (s-y)/t\right)\P( S_{k-1}\in dy, S_{k-2}<s,\ldots, S_1<s)\\
&& (y=su)\\
&=\sum_{k=1}^{\infty}\int_{-\infty}^{1-\varepsilon}\P\left(X_{1}\geq (s-su)/t\right)\P( S_{k-1}\in s\, du, S_{k-2}<s,\ldots, S_1<s).
\end{align*}
By Theorem \ref{thm:limunf}, since $1-u\geq \varepsilon>0$, we have   $$\frac{\P\left(X_{1}\geq (s(1-u))/t\right)}{\P\left(X_{1}\geq s(1-u)\right)}\to t^{\alpha}
$$ 
as $s\to \infty$,
uniformly in $t \in[0,1]$. Hence, as $s\to \infty$,
\begin{align*}
\P&\left(\frac{U_s}{U_s+O_s}\leq t, S_{N_s-1}\leq s(1-\varepsilon)\right)\\
&\sim \sum_{k=1}^{\infty}\int_{-\infty}^{1-\varepsilon}t^\alpha\P\left(X_{1}\geq s-su\right)\P( S_{k-1}\in s\, du, S_{k-2}<s,\ldots, S_1<s)\\
&=t^\alpha\sum_{k=1}^{\infty}\int_{-\infty}^{s(1-\varepsilon)}\P\left(X_{k}\geq s-su\right)\P( S_{k-1}\in dy, S_{k-2}<s,\ldots, S_1<s)\\
&=t^\alpha\sum_{k=1}^{\infty}\P\left(X_{k}\geq s-S_{k-1}, S_{k-1}\leq s(1-\varepsilon), N_s=k\right)\\
&=t^\alpha \P(S_{N_s-1}\leq s(1-\varepsilon)).
\end{align*}
\end{proof}

\subsection{Arccosine distribution}\rm The arcsine distribution is well known for its many interesting properties, see, for example \cite{arcsine}.
In this section we will study a similar distribution which we will call the arccosine distribution. We will need this material in Section \ref{sec:description} because we will study
 products of random variables of the form $\cos \Phi_1\cdots \cos \Phi_n$
where $\Phi_1,\ldots, \Phi_n$ are i.i.d. random variables with distribution $U(-\pi/2,\pi/2)$ (uniform on $(-\pi/2,\pi/2)$).

Let
\begin{equation*}
 F=\cos \Phi, 
\end{equation*}
where $\Phi\stackrel{d}{=} U(-\pi/2,\pi/2)$. We will call the distribution ``arccosine'' and we will use $F$  to denote a random variable with this distribution in the rest of the paper.
Note that $F\in [0,1]$, a.s. 

\begin{lemma}\label{lema:tail1}We have for $x\in(0,1)$,
\begin{equation*}
 P(F>x)=\frac{2}{\pi}\arccos x. 
\end{equation*}
The density of $F$ is $\frac{2}{\pi \sqrt{1-x^2}}$
for $x\in (0,1)$ and $0$ otherwise. Furthermore, as $x\to 1^-$,
\begin{equation*}
 \P(F>x)\sim\frac{2\sqrt{2}}{\pi}\sqrt{1-x}.
\end{equation*}
\end{lemma}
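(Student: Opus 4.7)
The plan is to compute each of the three claims by direct calculation from the definition $F=\cos\Phi$ with $\Phi\sim U(-\pi/2,\pi/2)$.

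First I would compute the tail. Since $\cos$ is even and strictly decreasing on $[0,\pi/2]$, the event $\{F>x\}=\{\cos\Phi>x\}$ equals $\{|\Phi|<\arccos x\}$ for $x\in(0,1)$. Because $\Phi$ has density $1/\pi$ on $(-\pi/2,\pi/2)$, this gives
\begin{equation*}
\P(F>x)=\frac{1}{\pi}\cdot 2\arccos x=\frac{2}{\pi}\arccos x.
\end{equation*}

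Next I would obtain the density by differentiating the distribution function $\P(F\leq x)=1-\frac{2}{\pi}\arccos x$, using $\frac{d}{dx}\arccos x=-1/\sqrt{1-x^2}$, to get the stated density $\frac{2}{\pi\sqrt{1-x^2}}$ on $(0,1)$; since $F\in[0,1]$ a.s., the density vanishes off this interval.

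Finally, for the asymptotic as $x\to 1^-$, I would parametrize $x=\cos\theta$ with $\theta=\arccos x\to 0^+$. The Taylor expansion $1-\cos\theta=\theta^2/2+O(\theta^4)$ gives $\theta\sim\sqrt{2(1-x)}$ as $x\to 1^-$, so
\begin{equation*}
\P(F>x)=\frac{2}{\pi}\arccos x=\frac{2}{\pi}\theta\sim\frac{2\sqrt{2}}{\pi}\sqrt{1-x}.
\end{equation*}

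There is no real obstacle here; the whole lemma is a direct change of variables from the uniform distribution of $\Phi$, and the asymptotic is just the leading-order expansion of $\arccos$ at $1$. The only point requiring a hint of care is checking the symmetry argument so that the factor of $2$ in the tail (from $|\Phi|<\arccos x$) is correctly accounted for.
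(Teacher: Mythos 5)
Your proof is correct, and since the paper explicitly omits the proof on the grounds that it is elementary, your direct computation (tail via $\{|\Phi|<\arccos x\}$, density by differentiation, asymptotic from $\arccos x\sim\sqrt{2(1-x)}$) is precisely the argument the authors had in mind.
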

We omit the proof because it is elementary.

Recall that $0!!=1$, $1!!=1$, and
 $n!!= n\cdot (n-2)!! $ for integers $n\geq 2$.

\begin{propo}\label{thm:prodarcsine} 
Let $(F_k)$ be an i.i.d. sequence of random variables distributed as $F$. For $n\geq 1$, we have $\P(F_1F_2\cdots F_n>x)\sim c_n (1-x)^{n/2}$ as $x\to 1^-$, where 
$c_1=\frac{2\sqrt{2}}{\pi}$, $c_2=\frac{2}{\pi}$ and for $n\geq 2$ we have
$$c_{n}=\frac{4}{\pi n}c_{n-2}.$$
Hence, 
\begin{equation*}
 \P(F_1F_2\cdots F_n>x)\sim 
\begin{cases}
\displaystyle
\frac{1}{n!!}\left(\frac{4}{\pi}\right)^{n/2}(1-x)^{n/2}, & \text{  if $n$ is even}, \\
\displaystyle
\frac{1}{\sqrt{2}\, n!!}\left(\frac{4}{\pi}\right)^{ (n+1)/2}(1-x)^{n/2}, & \text{  if $n$ is odd} .
\end{cases}
\end{equation*}
\end{propo}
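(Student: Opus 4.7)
My plan is induction on $n$, with the base case $n=1$ being the tail estimate in Lemma \ref{lema:tail1}. For the inductive step from $n-1$ to $n$, I would condition on the last factor: since $F_1\cdots F_n > 1-\eps$ forces $F_n > 1-\eps$, the explicit density of $F$ gives
\begin{equation*}
\P(F_1\cdots F_n > 1-\eps) = \int_{1-\eps}^{1}\P\bigl(F_1\cdots F_{n-1}>(1-\eps)/y\bigr)\,\frac{2}{\pi\sqrt{1-y^2}}\,dy.
\end{equation*}

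The substitution $y = 1-\eps t$ with $t\in[0,1]$ linearises this integral. As $\eps \to 0^+$ one has, uniformly on compact subsets of $[0,1)$,
\begin{equation*}
1-\frac{1-\eps}{y} = \frac{\eps(1-t)}{1-\eps t} \sim \eps(1-t), \qquad \sqrt{1-y^2} = \sqrt{\eps t(2-\eps t)}\sim \sqrt{2\eps t}.
\end{equation*}
Combining these with the inductive hypothesis $\P(F_1\cdots F_{n-1}>1-z)\sim c_{n-1}z^{(n-1)/2}$ and passing to the limit under the integral yields
\begin{equation*}
\P(F_1\cdots F_n>1-\eps) \sim c_{n-1}\,\frac{\sqrt 2}{\pi}\,\eps^{n/2}\int_0^1 t^{-1/2}(1-t)^{(n-1)/2}\,dt = c_{n-1}\,\frac{\sqrt 2}{\pi}\,B\bigl(\tfrac12,\tfrac{n+1}{2}\bigr)\,\eps^{n/2}.
\end{equation*}
Rewriting the Beta function as $B(\tfrac12,\tfrac{n+1}{2}) = \sqrt{\pi}\,\Gamma((n+1)/2)/\Gamma(n/2+1)$ gives the one-step recursion $c_n = c_{n-1}\sqrt{2/\pi}\,\Gamma((n+1)/2)/\Gamma(n/2+1)$. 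Composing two consecutive instances, the Gamma functions telescope via $\Gamma(n/2)/\Gamma(n/2+1)=2/n$ to produce the stated two-step form $c_n = \frac{4}{\pi n}c_{n-2}$. The closed forms for odd and even $n$ then follow by routine iteration from the base values $c_1=2\sqrt{2}/\pi$ (Lemma \ref{lema:tail1}) and $c_2=2/\pi$ (the $n=2$ case just computed), the resulting products of consecutive integers of like parity collapsing into double factorials.

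The main technical point is justifying the limit-integral swap. On the bulk region $t\in[0,1-\eta]$ the two asymptotic equivalences above are uniform in $t$ and the inductive hypothesis applies with $z$ bounded away from $0$ in units of $\eps$, so the rescaled integrand converges uniformly to the claimed limit. On the residual region $t\in[1-\eta,1]$ the envelope bound $\P(F_1\cdots F_{n-1}>1-z)\leq C_n z^{(n-1)/2}$ (valid uniformly for $z$ small by the inductive hypothesis) controls the contribution by a constant multiple of $\eta^{(n+1)/2}\eps^{n/2}$, which is $o(\eps^{n/2})$ upon letting $\eta\downarrow 0$ after $\eps\downarrow 0$. A standard dominated-convergence argument then closes the estimate.
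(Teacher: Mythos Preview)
Your argument is correct, but it follows a different inductive scheme from the paper's. The paper performs a \emph{two-step} induction: after computing $c_2$ separately, for $n\ge 3$ it conditions on the product $F_{n-1}F_n$ rather than on $F_n$ alone. Since the density of $F_{n-1}F_n$ near $1$ is asymptotically the constant $2/\pi$ (this is exactly the $n=2$ computation), the resulting integral $\int_x^1 c_{n-2}(1-x/s)^{(n-2)/2}\,\frac{2}{\pi}\,ds$ has a bounded integrand and yields the recursion $c_n=\frac{4}{\pi n}c_{n-2}$ in one line. Your one-step induction instead conditions on $F_n$, whose density $\frac{2}{\pi\sqrt{1-y^2}}$ is singular at $1$; this produces the Beta integral $B(\tfrac12,\tfrac{n+1}{2})$ and a one-step recursion in Gamma functions, which you then compose twice to reach the same two-step form. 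Your route is more uniform (no separate base case $n=2$ is needed) at the cost of handling an integrable singularity.

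One small imprecision: on the bulk $[0,1-\eta]$ you say the rescaled integrand ``converges uniformly to the claimed limit,'' but the limit $t^{-1/2}(1-t)^{(n-1)/2}$ is unbounded near $t=0$, so uniform convergence in the usual sense fails there. What is actually true (and sufficient) is that the \emph{ratio} of the rescaled integrand to its limit tends to $1$ uniformly on $[0,1-\eta]$, and that the rescaled integrand is dominated by a fixed integrable multiple of $t^{-1/2}$; dominated convergence then gives the desired limit of the integral. Your concluding sentence already invokes dominated convergence, so this is only a wording issue, not a gap.
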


\begin{proof}
The claim holds for $n=1$ by Lemma \ref{lema:tail1}.
Suppose $n=2$. Then
for $x\to 1^-$,
 \begin{align*}
  \P(F_1F_2>x) &= \int_x^1\P\left(F_1>\frac{x}{s}\right)\frac{2}{\pi\sqrt{1-s^2}}ds\\
	      & \sim 4\int_x^1\frac{1}{\pi^2}\sqrt{1-\frac{x}{s}}\frac{1}{\sqrt{1-s}}\frac{\sqrt{2}}{\sqrt{1+s}}ds\\ 
& \sim 4\int_x^1\frac{1}{\pi^2}\sqrt{1-\frac{x}{s}}\frac{1}{\sqrt{1-s}}ds\\
	      & = \frac{4}{\pi^2}\int_x^1\sqrt{\frac{s-x}{1-s}}ds =\frac{2(1-x)}{\pi}.
 \end{align*}
 We now consider $n\geq 3$ and assume that our claim holds for 
$n-1$ and $n-2$. The last asymptotic estimate yields for $x\to 1^-$,
\begin{align*}
  \P&(F_1F_2\cdots F_{n}>x) \\
 &=\int_x^1 \P\left(F_1F_2\cdots F_{n-2}>\frac{x}{s}\right)\P(F_{n-1}F_{n}\in ds)\\
 &\sim \int_x^1 \P\left(F_1F_2\cdots F_{n-2}>\frac{x}{s}\right)\frac{2}{\pi}ds\\
&\sim  \frac{2c_{n-2}}{\pi}\int_x^1 \left(1-\frac{x}{s}\right)^{(n-2)/2}ds\\
&\sim  \frac{2c_{n-2}}{\pi}\int_x^1 \left(s-x\right)^{(n-2)/2}ds=\frac{4c_{n-2}}{\pi n}(1-x)^{n/2}.
\end{align*}
The proposition now follows by induction.
\end{proof}

\section{Light ray in cylinder}\label{sec:description}\rm 

We will study a light reflection model in a semi-infinite $d$-dimensional 
cylinder, generalizing the setup introduced in \cite{laser}.
Consider a semi-infinite cylinder in $d$ dimensions
 $$C=\{(x_1,x_2,\ldots, x_d)\in \R^d\, :\, x_2^2+\ldots+x_d^2=1, x_1\leq 0\}.$$ 
The cross sections will be denoted 
$$\S_{s}=\{(s,x_2,\ldots, x_d)\in \R^d\, :\, x_2^2+\ldots+x_d^2=1\}.
$$
We will assume that the light ray reflects from the cylinder surface  and stays inside the cylinder until it exits C, i.e., it crosses $\S_0$
(see Figure \ref{sl3}. for the case $d=3$).
\begin{figure}
\begin{center}
 \includegraphics[width=9cm]{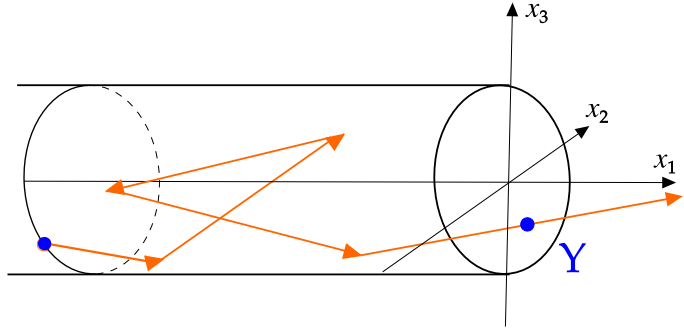}
\caption{Reflections in cylinder; 3-dimensional illustration. }\label{sl3}
\end{center}
\end{figure}
The reflections will be Lambertian---we formalize this as follows.
\begin{itemize}
 \item The ray starts at a point $S_0$ uniformly chosen in $\S_{-s}$
for some $s>0$. The initial velocity vector points into the interior of the cylinder and has the same distribution as the one at any reflection point, described below.
 \item  Whenever the ray hits $C$, it reflects. The angle $\Theta$ between the reflected ray and  the inner normal vector to the surface at the reflection point $Q$ (see Figure \ref{3drefl}) has the following density, 
\begin{figure}
\begin{center}
 \includegraphics[width=9cm]{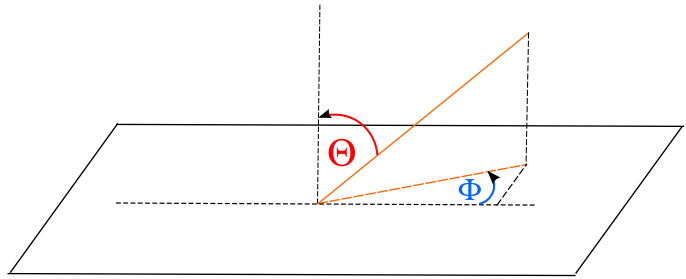}
\caption{Reflection vector encoding; 3-dimensional illustration.}\label{3drefl}
\end{center}
\end{figure}
\begin{equation}
 f(\theta) = \left\{\begin{array}{cr}
                         \frac{1}{2}\cos \theta, & \text{  for  }\theta \in (-\pi/2,\pi/2), \\
			  0,&  \textrm{otherwise}.
                        \end{array}
\right. \label{eq1}
\end{equation}
\item Given $\Theta$, the distribution of the point of intersection of the $(d-1)$-dimensional 
unit sphere centered at $Q$ and projection of the reflected ray on the tangent hyper plane is  uniformly 
distributed  on the intersection of the sphere and the tangent hyperplane
(see Figure \ref{3drefl}).
\end{itemize}
 
If $V$ has the uniform distribution $ U(-1,1)$ then it is easy to check that the following  equalities hold
in the sense of distribution,
\begin{align}\label{lem:theta}
\sin\Theta = V,\
 \cos\Theta = \sqrt{1-V^2},\
 \tan \Theta= \frac{V}{\sqrt{1-V^2}}.
\end{align}

Suppose that the light ray emanates from the point $(-u,0,\ldots,0,-1)$ 
for some $u>0$. 
The tangent plane to the cylinder $C$ at this point is $\{x:x_d=-1\}$ 
and the ray is moving in the direction of the random vector 
\begin{align}\label{eq:reflvector}
 \bv(R)=(&R\sin\Theta\cos \Phi_{d-2}\cos \Phi_{d-3}\dots\cos \Phi_{2}\cos \Phi_1,\\ 
&R\sin\Theta\cos \Phi_{d-2}\cos \Phi_{d-2}\dots\cos \Phi_2\sin \Phi_1,
\nonumber\\
&R\sin\Theta\cos \Phi_{d-2}\cos \Phi_{d-2}\dots\cos \Phi_3\sin \Phi_2,
\nonumber\\
&  \dots , \nonumber\\
&R\sin\Theta\cos \Phi_{d-2}\sin \Phi_{d-3},\nonumber\\
&  R\sin\Theta \sin \Phi_{d-2},\nonumber\\
 &R\cos\Theta),\nonumber
\end{align}
where $\Phi_1,\ldots \Phi_{d-2}$ are i.i.d. uniform $U(-\pi/2,\pi/2)$, 
and $R>0$.

Let the infinite cylinder be denoted
 $$\ol C=\{(x_1,x_2,\ldots, x_d)\in \R^d\, :\, x_2^2+\ldots+x_d^2=1\}.
$$ 

\begin{lemma}
Suppose that a light ray starts from a point $x=(-u,0,\ldots,0,-1)$ and 
moves in the direction $\bv(R)$ given by \eqref{eq:reflvector}. 
The ray will exit $\ol C$ at a point $y$ whose distance from $x$ is 
\begin{equation}
 R=\frac{2\cos\Theta}{1-\sin\Theta^2\cos^2 \Phi_{d-2}\ldots\cos^2 \Phi_1}.\label{eq:radius}
\end{equation}
\end{lemma}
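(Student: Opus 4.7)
The plan is to parametrize the ray by arc length and find the first positive value at which it meets the cylinder surface.

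First, I would verify that $\bv(1)$ is already a unit vector. Writing out
\[
|\bv(1)|^2 = \sin^2\Theta\sum_{j}(\text{$j$-th coordinate})^2 + \cos^2\Theta,
\]
one recognizes the sum over $j=1,\dots,d-1$ as a standard spherical-coordinate expression: applying $\cos^2\Phi_1 + \sin^2\Phi_1 = 1$ collapses the first two terms into $\cos^2\Phi_{d-2}\cdots\cos^2\Phi_2$, then $\cos^2\Phi_2 + \sin^2\Phi_2 = 1$ collapses the next, and iterating telescopes the sum down to $1$. Hence $|\bv(1)|^2 = \sin^2\Theta + \cos^2\Theta = 1$.

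Consequently, the ray traveled a distance $s$ from $x$ is the point $y(s) = x + s\,\bv(1)$, so that the distance from $x$ to the exit point coincides with the parameter $s$ at which $y(s)$ first hits $\ol C$. The condition $y(s) \in \ol C$ reads $y_2(s)^2 + \cdots + y_d(s)^2 = 1$, and since $x_2 = \cdots = x_{d-1} = 0$ and $x_d = -1$ this becomes
\[
s^2\bigl(\bv_2^2 + \cdots + \bv_{d-1}^2\bigr) + (-1 + s\,\bv_d)^2 = 1.
\]
Expanding and using $\bv_2^2 + \cdots + \bv_d^2 = 1 - \bv_1^2$ (from $|\bv(1)|^2 = 1$), this simplifies to $s\bigl[s(1 - \bv_1^2) - 2\bv_d\bigr] = 0$.

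The root $s=0$ corresponds to the starting point $x$, which lies on $\ol C$. The other root,
\[
s = \frac{2\bv_d}{1 - \bv_1^2} = \frac{2\cos\Theta}{1 - \sin^2\Theta\,\cos^2\Phi_{d-2}\cdots\cos^2\Phi_1},
\]
is positive because $\cos\Theta > 0$ a.s.\ and the denominator is a difference $1 - q$ with $q \in [0,1)$ a.s. Because $\bv(1)$ is unit, this value of $s$ equals the Euclidean distance from $x$ to the exit point, giving exactly $R$ as claimed in \eqref{eq:radius}. No serious obstacle is anticipated; the only non-mechanical step is the telescoping identity showing $|\bv(1)| = 1$, and the rest is an elementary quadratic calculation.
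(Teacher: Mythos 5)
Your proof is correct and follows the same approach the paper takes: set $y = x + R\,\bv(1)$, impose the cylinder condition $y_2^2 + \cdots + y_d^2 = 1$, and solve the resulting quadratic for the nonzero root. You have simply filled in the ``straightforward calculation'' the paper leaves implicit, including the useful telescoping check that $\bv(1)$ is a unit vector, which justifies interpreting $R$ as the distance from $x$ to the exit point.
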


\begin{proof}
We need to find $R>0$ such that $y= x + \bv(R)$ 
and $y_2^2+y_3^2+\dots+y_d^2=1$. A straightforward calculation yields
\eqref{eq:radius}.
\end{proof}

If a light ray starts at $(-u,0,\ldots,0,-1)$ and moves in the direction 
of the vector $(\ref{eq:reflvector})$ then it is easy to check that it 
intersects the plane $\{x:x_1=0\}$ at a point $Z_u=(0,\widetilde{Z}_u)$
given by
\begin{align}
\label{eq:planeIntersectionCoordinates}
\left(0, \frac{u\sin \Phi_1}{\cos \Phi_1}, 
\frac{u\sin \Phi_2}{\cos \Phi_1\cos \Phi_2}, 
\ldots ,\frac{u\sin \Phi_{d-2}}{\cos \Phi_1\ldots \cos \Phi_{d-2}},  
\frac{u\cot \Theta}{\cos \Phi_1\ldots \cos \Phi_{d-2}}-1\right). 
\end{align}
Let $I_{d-1}= [-1,1]^{d-1}$ and let $\mathcal{B}(A)$ denote the family 
of Borel subsets of a set $A$.

\begin{lemma}\label{lemma:cubeSubsetConditionalProbability}
Let $\rho_u$ be the conditional distribution of $\widetilde{Z}_u$ given
$\{\widetilde{Z}_u\in I_{d-1}\}$, i.e.,
\begin{align}\label{j13.2}
\rho_u(A)=\P(\widetilde{Z}_u\in A\mid \widetilde{Z}_u\in I_{d-1}),
\qquad  A\in \mathcal{B}(I_{d-1}).
\end{align}
The measures $\rho_u$
converge weakly, when $u\to \infty$, to the measure $\rho_{\infty}$ given by  
\begin{align}\label{j12.1}
\rho_{\infty}(A) = \frac{1}{2^d}\int_{A}dx_2\ldots dx_{d-1}\, (1+x_d) dx_d.
\end{align}

\end{lemma}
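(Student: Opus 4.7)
My plan is to compute the density of $\widetilde Z_u$ on $I_{d-1}$ via an explicit change of variables from the angular parameters to the Cartesian coordinates, extract its leading asymptotics as $u\to\infty$, and conclude by normalization.

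First I would use \eqref{lem:theta} to write $V=\sin\Theta$, which is $U(-1,1)$; so $(\Phi_1,\ldots,\Phi_{d-2},V)$ are independent with joint density $1/(2\pi^{d-2})$ on $(-\pi/2,\pi/2)^{d-2}\times(-1,1)$. Denoting the coordinates of $\widetilde Z_u$ by $(x_2,\ldots,x_d)$ as in \eqref{eq:planeIntersectionCoordinates} and setting $\zeta:=x_d+1=u\cot\Theta/(\cos\Phi_1\cdots\cos\Phi_{d-2})$, I would observe that the map $(\Phi_1,\ldots,\Phi_{d-2},V)\mapsto(x_2,\ldots,x_{d-1},\zeta)$ has lower-triangular Jacobian, since for $j\le d-2$ the entry $x_{j+1}$ depends only on $\Phi_1,\ldots,\Phi_j$, while $\zeta$ depends on all of them and on $V$. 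Its determinant is therefore the product of the diagonal entries,
\[
\det J=-\frac{u^{d-1}}{V^{2}\sqrt{1-V^{2}}\,(\cos\Phi_1\cdots\cos\Phi_{d-2})}\prod_{j=1}^{d-2}\frac{\sec^{2}\Phi_j}{\cos\Phi_1\cdots\cos\Phi_{j-1}}.
\]

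Next I would analyze the conditioning event. Since $x_{j+1}\in[-1,1]$ and $\zeta\in[0,2]$ force $\Phi_j=O(1/u)$ and $\cot\Theta=O(1/u)$, on that event $\cos\Phi_j\to 1$ and $V\to 1$ uniformly as $u\to\infty$; and solving $\zeta V(\cos\Phi_1\cdots\cos\Phi_{d-2})=u\sqrt{1-V^{2}}$ gives the closed form
\[
V=\frac{u}{\sqrt{u^{2}+\zeta^{2}(\cos\Phi_1\cdots\cos\Phi_{d-2})^{2}}},
\]
from which the crucial uniform asymptotic $\sqrt{1-V^{2}}\sim \zeta/u = (1+x_d)/u$ follows by Taylor expansion. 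Substituting into the Jacobian yields $|\det J|\sim u^{d}/(1+x_d)$, and dividing the joint density of $(\Phi_1,\ldots,V)$ by $|\det J|$ gives
\[
f_{\widetilde Z_u}(x_2,\ldots,x_d)\;\sim\;\frac{1+x_d}{2\pi^{d-2}\,u^{d}},\qquad u\to\infty,
\]
uniformly on compact subsets of $I_{d-1}$.

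Finally, integrating this density over $I_{d-1}$ gives $\P(\widetilde Z_u\in I_{d-1})\sim 2^{d-2}/(\pi^{d-2}u^{d})$, and dividing yields a conditional density converging uniformly on $I_{d-1}$ to a constant multiple of $1+x_d$; since $I_{d-1}$ is compact and both densities are bounded, Scheff\'e's lemma then delivers total-variation (and in particular weak) convergence of $\rho_u$ to the limiting measure $\rho_\infty$. The main technical obstacle is securing the uniformity of the asymptotic $\sqrt{1-V^{2}}\sim(1+x_d)/u$ on the conditioning event; this is precisely where the closed-form expression for $V$ is essential, because it reduces the estimate to a first-order Taylor expansion whose error is controlled uniformly for $(x_2,\ldots,x_d)$ in compact subsets of $I_{d-1}$.
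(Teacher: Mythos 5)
Your argument is correct and takes a genuinely different route. The paper proves weak convergence by a sandwich argument on the probabilities of corner boxes $I_{t_2,\ldots,t_d}$: on the conditioning event the factors $\cos\Phi_j$ all lie in $(1-\varepsilon,1]$, which lets the authors bound $\P(\widetilde{Z}_u\in I_{t_2,\ldots,t_d})$ above and below by $\P\bigl((\sin\Phi_1,\ldots,\sin\Phi_{d-2},\cot\Theta)\in\frac{\alpha}{u}\cdot I_{t_2,\ldots,t_{d-1},1+t_d}\bigr)$ with $\alpha\in\{1,1-\varepsilon\}$, then factor these into one-dimensional probabilities by independence; the resulting limit on boxes, $t_2\cdots t_{d-1}t_d^2/2^d$, identifies $\rho_\infty$. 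You instead compute the density of $\widetilde{Z}_u$ outright via the change of variables from $(\Phi_1,\ldots,\Phi_{d-2},V)$ to $(x_2,\ldots,x_{d-1},\zeta)$ with $\zeta=1+x_d$, exploiting the triangular Jacobian and the closed form for $V$ to obtain the uniform asymptotic $f_{\widetilde{Z}_u}\sim(1+x_d)/(2\pi^{d-2}u^d)$ on $I_{d-1}$, and finish with Scheff\'e's lemma, which in fact gives convergence in total variation, more than the weak convergence claimed. Your approach is heavier up front (one must justify that the change of variables is a diffeomorphism on the relevant region and verify uniformity of the Taylor estimate near $\zeta=0$) but it delivers the conditional density directly, so that Proposition \ref{propo:densityAsymptotics} becomes a by-product rather than a separate consequence. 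One remark worth flagging: your computation gives conditional density $(1+x_d)/2^{d-1}$, so the normalizing constant in the statement should read $1/2^{d-1}$ rather than the $1/2^d$ appearing in \eqref{j12.1} (as written, $\rho_\infty(I_{d-1})=1/2$, not $1$); since the paper's own box limit $t_2\cdots t_{d-1}t_d^2/2^d$ agrees with your constant, this is a misprint in \eqref{j12.1} and your value is the correct one.
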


\begin{proof}
Let $I_{t_2,\ldots,t_d}:= [-1,-1+t_2]\times \ldots \times [-1,-1+t_{d}]$. 
It will suffice to prove that, as $u\to\infty$,
\begin{align}
\label{j11.2}
\P(\widetilde{Z}_u\in I_{t_2,\ldots,t_d}|\widetilde{Z}_{u}\in I_{d-1})
=\frac{\P(\widetilde{Z}_u\in I_{t_2,\ldots,t_d})}{\P(\widetilde{Z}_u\in I_{d-1})}
\to \frac{t_2\ldots t_{d-1}t_d^2}{2^d}.
\end{align}

 First, note that if $\widetilde{Z}_u\in I_{d-1}$ then 
$$\left|\frac{u\sin \Phi_k}{\cos \Phi_1\ldots \cos \Phi_k}\right|\leq 1
$$
and, therefore,
$|\sin \Phi_k|\leq 1/u$
for $k=1,\ldots, d-2$. Hence, for any $\varepsilon>0$, for large enough $u>0$,
if $\widetilde{Z}_u\in I_{d-1}$ then  
$\cos \Phi_1\cos \Phi_2\ldots \cos \Phi_{k-2} \in (1-\varepsilon,1]$
for $k =3, \dots , d$.
This implies that for large enough $u$ we have 
\begin{align}\label{eq:inequalityEstimates} 
\P&\left((\sin \Phi_1,\ldots,\sin \Phi_{d-2},\cot\Theta)\in 
\frac{1-\varepsilon}{u}\cdot I_{t_2,\ldots,t_{d-1},1+t_{d}}\right)\\
&\leq \P(\widetilde{Z}_u\in I_{t_2,\ldots,t_d})
 \nonumber \\
&\leq \P\left((\sin \Phi_1,\ldots,\sin \Phi_{d-2},\cot\Theta)\in 
\frac{1}{u}\cdot I_{t_2,\ldots,t_{d-1},1+t_{d}}\right).\nonumber
\end{align}
Since $\Phi_1,\ldots ,\Phi_{d-2}$ and $\Theta$ are independent, 
we have for $\alpha>0$,
\begin{align}\label{j11.3}
 \P&\left((\sin \Phi_1,\ldots,\sin \Phi_{d-2},\cot\Theta)\in \frac{\alpha}{u}\cdot I_{t_2,\ldots,t_{d-1},1+t_{d}}\right)\\
&=\left[\prod_{k=2}^{d-1}\P\left(\sin \Phi_{k-1} \in \frac{\alpha}{u}[-1,-1+t_{k}]\right)\right] \P\left(\cot\Theta \in \frac{\alpha}{u}[0,t_{d}]\right).\nonumber
\end{align}
For $k=2\ldots, d-1$,
\begin{align}\label{j11.4}
 \P&\left(\sin \Phi_{k-1} \in \frac{\alpha}{u}[-1,-1+t_{k}]\right)\\
&=\frac{\arcsin\left( \frac{\alpha(-1+t_{k})}{u}\right)-\arcsin \left(-\frac{\alpha}{u}\right)}{\pi}\sim \frac{\alpha}{u}\cdot \frac{t_k}{\pi}, \qquad \text{  for  } u \to \infty.\nonumber
\end{align}
It follows from (\ref{lem:theta}) that
\begin{align*}
 \P&\left(\cot \Theta \in \frac{\alpha}{u}[0,t_{d}]\right)=
\P\left(\frac{\sqrt{1-V^2}}{V}\leq \frac{\alpha}{u}t_{d}, V\geq 0\right)\\
&=\P\left(1\leq \left(\frac{\alpha^2}{u^2}t_{d}^2+1\right)V^2, 
V\geq 0\right) = \P\left( \sqrt{\frac{1}{1+\frac{\alpha^2}{u^2}t_{d}^2}}
\leq V\right)\\
&=\frac{1}{2}\cdot \left(1-\sqrt{\frac{1}{1+\frac{\alpha^2}{u^2}t_{d}^2}}
\right)\sim \frac{\alpha^2}{4u^2}t_{d}^2,
\qquad \text{  for  } u \to \infty.
\end{align*}
This, \eqref{j11.3} and \eqref{j11.4} yield,  for $u \to \infty$,
\begin{align}\label{j13.1}
\P\left((\sin \Phi_1,\ldots,\sin \Phi_{d-2},\cot\Theta)\in \frac{\alpha}{u}\cdot I_{t_2,\ldots,t_{d-1},1+t_{d}}\right)
\sim \frac{\alpha^{d}}{4u^d\pi^{d-2}}t_2\cdots t_{d-1}t_d^2.
\end{align}
Using the fact that $I_{d-1}=I_{2,\ldots,2}$, we obtain from (\ref{eq:inequalityEstimates}) and the last formula,
$$(1-\varepsilon)^d\frac{t_2\cdots t_{d-1}t_d^2}{2^d}\leq 
\liminf_{u\to\infty}\P(\widetilde{Z}_u\in I_{t_2,\ldots,t_d}|\widetilde{Z}_u
\in I_{d-1})$$
$$\leq \limsup_{u\to\infty}\P(\widetilde{Z}_u\in I_{t_2,\ldots,t_d}|
\widetilde{Z}_u\in I_{d-1})\leq \frac{1}{(1-\varepsilon)^d}\frac{t_2\cdots t_{d-1}t_d^2}{2^d}.$$
As we let $\varepsilon \downarrow 0$ the lemma follows from \eqref{j11.2}.
\end{proof}

\begin{corol}
As $u\to\infty$, we have
\begin{equation}
 \P(\widetilde{Z}_u\in I_{d-1})\sim \frac{2^{d-2}}{u^d\pi^{d-2}}.\label{eq:probabilityQuadrant}
\end{equation}
For $A\in \mathcal{B}(I_{d-1})$ such that $\lambda_{d-1}(\partial A)=0$, when $u\to\infty$,
\begin{equation}
 \P(\widetilde{Z}_u\in A)\sim \frac{1}{4u^d\pi^{d-2}}\int_{A}dx_2\ldots dx_{d-1}(1+x_d)dx_d. \label{eq:probabilityOfSetA}
\end{equation}
\end{corol}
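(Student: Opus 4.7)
The plan is to observe that both assertions follow almost immediately from the computations already carried out in the proof of Lemma \ref{lemma:cubeSubsetConditionalProbability}, combined with its weak-convergence conclusion.

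For equation \eqref{eq:probabilityQuadrant}, I would specialize the sandwich bound \eqref{eq:inequalityEstimates} to the choice $t_2 = \ldots = t_d = 2$, so that $I_{t_2,\ldots,t_d} = I_{d-1}$. Applying the exact asymptotic \eqref{j13.1} (with $\alpha = 1-\varepsilon$ on the lower side and $\alpha = 1$ on the upper side) and then letting $\varepsilon \downarrow 0$ yields
\[
\P(\widetilde{Z}_u \in I_{d-1}) \sim \frac{1}{4u^d \pi^{d-2}} \cdot 2^{d-2} \cdot 2^{2} = \frac{2^{d-2}}{u^d \pi^{d-2}},
\]
which is \eqref{eq:probabilityQuadrant}.

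For equation \eqref{eq:probabilityOfSetA}, I would decompose
\[
\P(\widetilde{Z}_u \in A) = \rho_u(A) \cdot \P(\widetilde{Z}_u \in I_{d-1}),
\]
valid for $A \in \mathcal{B}(I_{d-1})$. The measure $\rho_\infty$ defined in \eqref{j12.1} is absolutely continuous with respect to the $(d-1)$-dimensional Lebesgue measure on $I_{d-1}$ with a bounded density, so the hypothesis $\lambda_{d-1}(\partial A) = 0$ implies $\rho_\infty(\partial A) = 0$; that is, $A$ is a continuity set of $\rho_\infty$. The Portmanteau theorem, together with the weak convergence $\rho_u \Rightarrow \rho_\infty$ established in Lemma \ref{lemma:cubeSubsetConditionalProbability}, then gives $\rho_u(A) \to \rho_\infty(A)$. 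Combining this with the asymptotic from part one produces
\[
\P(\widetilde{Z}_u \in A) \sim \frac{2^{d-2}}{u^d \pi^{d-2}} \cdot \frac{1}{2^d} \int_A dx_2 \ldots dx_{d-1}(1+x_d)\, dx_d = \frac{1}{4u^d \pi^{d-2}} \int_A dx_2 \ldots dx_{d-1}(1+x_d)\, dx_d,
\]
as required.

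There is no genuine obstacle here — both statements are essentially bookkeeping corollaries. The only subtlety worth flagging is verifying that $\lambda_{d-1}$-null boundary implies $\rho_\infty$-null boundary, which is immediate from the boundedness of the density $\tfrac{1}{2^d}(1+x_d)$ on $I_{d-1}$.
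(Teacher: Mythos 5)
Your proof is correct and follows essentially the same route as the paper: both obtain \eqref{eq:probabilityQuadrant} by specializing the sandwich bound \eqref{eq:inequalityEstimates} together with the asymptotic \eqref{j13.1} to $t_2=\cdots=t_d=2$ and letting $\varepsilon\downarrow 0$, and both deduce \eqref{eq:probabilityOfSetA} by writing $\P(\widetilde{Z}_u\in A)=\rho_u(A)\,\P(\widetilde{Z}_u\in I_{d-1})$ and invoking the weak convergence $\rho_u\Rightarrow\rho_\infty$ from Lemma \ref{lemma:cubeSubsetConditionalProbability}. The paper is terser, leaving the Portmanteau step implicit, but the substance is identical.
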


\begin{proof}
The claim $(\ref{eq:probabilityQuadrant})$ follows from  
\eqref{eq:inequalityEstimates} and \eqref{j13.1} 
when we set $t_1=\ldots=t_d=2$.

Claim \eqref{eq:probabilityOfSetA} now follows from \eqref{eq:probabilityQuadrant} and Lemma \ref{lemma:cubeSubsetConditionalProbability}.
\end{proof}

\begin{propo}\label{propo:densityAsymptotics}
For all $u>0$, $\widetilde{Z}_u$ is a continous random vector with density $f_u$ continous at $(0,\ldots,0)$.
Further, we have as $u\to\infty$
$$f_u(0,\ldots, 0)\sim \frac{1}{4u^d\pi^{d-2}}.$$
\end{propo}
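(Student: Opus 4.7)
The plan is to obtain $f_u$ as the pushforward of the joint density of $(\Theta,\Phi_1,\ldots,\Phi_{d-2})$ under the smooth map $\Psi:(\Theta,\Phi_1,\ldots,\Phi_{d-2})\mapsto \widetilde{Z}_u$ given by \eqref{eq:planeIntersectionCoordinates}, and then to evaluate the resulting density at the origin. The domain and codomain of $\Psi$ both have dimension $d-1$, and the joint density on $(-\pi/2,\pi/2)^{d-1}$ is $f_{\Theta,\Phi}(\theta,\phi_1,\ldots,\phi_{d-2})=\cos\theta/(2\pi^{d-2})$.

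First I would identify the preimage of $(0,\ldots,0)$ under $\Psi$. Setting $Y_k = u\sin\Phi_k/(\cos\Phi_1\cdots\cos\Phi_k)=0$ forces $\Phi_k=0$ (the only zero of $\sin$ on $(-\pi/2,\pi/2)$) for $k=1,\ldots,d-2$, and then $Y_{d-1}=u\cot\Theta-1=0$ combined with the constraint $\cot\Theta>0$ pins down $\Theta=\arctan u$. Thus $\Psi^{-1}(0,\ldots,0)=\{\psi_u^*\}$ with $\psi_u^*:=(\arctan u,0,\ldots,0)$; a short neighborhood argument (small $\sin\Phi_k$ forces small $\Phi_k$, and small $\cot\Theta-1/u$ forces $\Theta$ near $\arctan u$) shows this uniqueness persists for $y$ in a full neighborhood of $(0,\ldots,0)$.

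The crucial simplification I would exploit is that at $\Phi_1=\cdots=\Phi_{d-2}=0$, the Jacobian matrix $J_\Psi(\psi_u^*)$ is diagonal: $\partial Y_k/\partial\Phi_j$ for $j<k$ carries a factor $\sin\Phi_k=0$, $\partial Y_k/\partial\Phi_j$ for $j>k$ and $\partial Y_k/\partial\Theta$ are identically zero for $k\leq d-2$, and $\partial Y_{d-1}/\partial\Phi_j$ carries $\sin\Phi_j=0$. The surviving diagonal entries are $\partial Y_k/\partial\Phi_k|_{\psi_u^*}=u$ for $k=1,\ldots,d-2$ and $\partial Y_{d-1}/\partial\Theta|_{\psi_u^*}=-(u^2+1)/u$ (using $\sin^2(\arctan u)=u^2/(u^2+1)$), so that $|\det J_\Psi(\psi_u^*)|=u^{d-3}(u^2+1)$.

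With the Jacobian nonvanishing, the inverse function theorem makes $\Psi$ a local diffeomorphism near $\psi_u^*$, and the change-of-variables formula gives $f_u(y)=f_{\Theta,\Phi}(\Psi^{-1}(y))/|\det J_\Psi(\Psi^{-1}(y))|$ in a neighborhood of the origin. Continuity of $f_{\Theta,\Phi}$, $\Psi^{-1}$, and $\det J_\Psi$ then passes to $f_u$ at $(0,\ldots,0)$, giving the claimed continuity. Evaluating at $\psi_u^*$ via $\cos(\arctan u)=1/\sqrt{1+u^2}$ yields an explicit closed form for $f_u(0,\ldots,0)$, and the leading-order asymptotic stated in the proposition follows from $(u^2+1)^{3/2}\sim u^3$ as $u\to\infty$. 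The only real obstacle is the Jacobian bookkeeping, which becomes routine once the off-diagonal vanishings are recognized.
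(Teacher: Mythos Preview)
Your approach is correct and genuinely different from the paper's. The paper does not compute $f_u$ directly; it invokes the asymptotic \eqref{eq:probabilityOfSetA} for $\P(\widetilde Z_u\in A)$, already obtained from the weak-convergence Lemma~\ref{lemma:cubeSubsetConditionalProbability}, and then passes from convergence of measures to pointwise convergence of the density at the origin in one informal line. Your change-of-variables computation is more self-contained: it produces the exact closed form $f_u(0,\ldots,0)=\bigl(2\pi^{d-2}u^{d-3}(1+u^2)^{3/2}\bigr)^{-1}$ valid for every $u>0$, makes the continuity claim rigorous via the inverse function theorem rather than by assertion, and needs none of the preparatory lemmas. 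One cosmetic remark: with the variable ordering $(\Theta,\Phi_1,\ldots,\Phi_{d-2})$ you use for $\psi_u^*$, the Jacobian is not literally diagonal---$\partial Y_{d-1}/\partial\Theta$ sits in the bottom-left corner---but it has exactly one nonzero entry per row and per column, so your value $|\det J_\Psi(\psi_u^*)|=u^{d-3}(u^2+1)$ is correct regardless (reorder the inputs as $(\Phi_1,\ldots,\Phi_{d-2},\Theta)$ if you want it diagonal). A noteworthy by-product of your explicit formula is the asymptotic $f_u(0,\ldots,0)\sim 1/(2u^d\pi^{d-2})$, which differs from the stated $1/(4u^d\pi^{d-2})$ by a factor of $2$; the discrepancy traces back to the normalizing constant in \eqref{j12.1}, which should be $2^{-(d-1)}$ rather than $2^{-d}$ (otherwise $\rho_\infty(I_{d-1})=\tfrac12$), and that factor of two propagates through \eqref{eq:probabilityOfSetA} into the proposition. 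Your direct computation is the one to trust here.
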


\begin{proof}
It is easy to see from the definition that $\widetilde{Z}_u$ 
has a continuous density. By $(\ref{eq:probabilityOfSetA})$ we have 
\begin{align*}
4u^d\pi^{d-2}\P(\widetilde{Z}_u\in A)
&=\int_{A}4u^d\pi^{d-2}f_u(x_2,\ldots,x_d)dx_2\ldots dx_d\\
&\to \int_{A}dx_2\ldots dx_{d-1}(1+x_d)dx_d,
\end{align*}
as $u\to\infty$ for all $A\in \mathcal{ B}(\R^{d-1})$ such that 
$\lambda (\partial A)=0$. This implies $4u^d\pi^{d-2}f_u(0,\ldots,0) \to 1$ as $u\to\infty$.
\end{proof} 

\begin{figure}[ht]
\begin{center}
 \includegraphics[width=8cm]{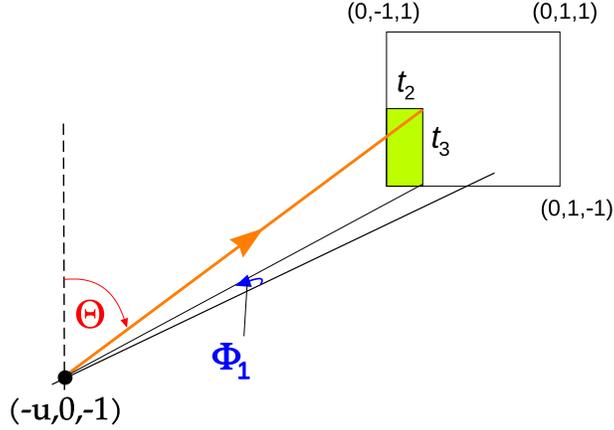}
\caption{Illustration of Lemma \ref{lemma:cubeSubsetConditionalProbability}: Given that the ray passes through the square, the probability that it passes through the rectangle is approximately $t_3^2t_2/8$ for large $u$. }\label{image:rayIntersectsPlane}
\end{center}
\end{figure}

The closed ball in $\R^{d-1}$ with center $x$ and radius $r$ will
be denoted $B_r(x)$. We will write $\D = B_1(0)$ and we will use 
$\lambda_{d-1}$ to denote Lebesgue measure on $\R^{d-1}$.
The following result will help us estimate where the light ray  
exits the tube.

\begin{propo}\label{thm:reflection:plane:intersection}
The measure $\tau_u$ defined by 
$$\tau_{u}(A)=\P(\widetilde{Z}_u\in A\mid \widetilde{Z}_u\in\D)\quad 
\textrm{for} \ A\in \mathcal{B}(\D),
$$ 
converges weakly, as $u\to\infty$, to the  
measure $\tau_{\infty}$ given by  
\begin{equation}
 \tau_{\infty}(A) = 
\frac{\Gamma\left(\frac{d+1}{2}\right)}{\pi^{(d-1)/2}}
\int_{A}dx_2\ldots dx_{d-1}\, (1+x_d) dx_d
\quad \textrm{for} \ A\in \mathcal{B}(\D). \label{eq:tau:infty}
\end{equation}
\end{propo}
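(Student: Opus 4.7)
The plan is to deduce this weak convergence essentially for free from the Corollary just established, since the Corollary already gives the sharp asymptotics of $\P(\widetilde{Z}_u \in A)$ for every Borel set whose boundary has Lebesgue measure zero. The only thing to check is that the normalizing constant comes out to $\Gamma((d+1)/2)/\pi^{(d-1)/2}$ and that convergence of probabilities on continuity sets is enough to give weak convergence.

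First I would fix a Borel set $A \subseteq \D$ with $\lambda_{d-1}(\partial A) = 0$. Since $\D \subseteq I_{d-1}$, both $A$ and $\D$ itself are elements of $\mathcal{B}(I_{d-1})$ with $\lambda_{d-1}$-null boundary (for $\D$, the boundary is the $(d-2)$-sphere, which has measure zero in $\R^{d-1}$). Applying the Corollary, equation \eqref{eq:probabilityOfSetA}, to both $A$ and $\D$, and then dividing,
\begin{align*}
\tau_u(A) = \frac{\P(\widetilde{Z}_u \in A)}{\P(\widetilde{Z}_u \in \D)}
\longrightarrow \frac{\int_A (1+x_d)\,dx_2\cdots dx_d}{\int_\D (1+x_d)\,dx_2\cdots dx_d}
\end{align*}
as $u\to\infty$. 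The denominator is easy to evaluate: by the symmetry of $\D$ under $x_d \mapsto -x_d$ the linear term integrates to $0$, so $\int_\D (1+x_d)\,dx = \lambda_{d-1}(\D) = \pi^{(d-1)/2}/\Gamma((d+1)/2)$, which is exactly the reciprocal of the constant appearing in \eqref{eq:tau:infty}. Thus $\tau_u(A) \to \tau_\infty(A)$ for every $A \in \mathcal{B}(\D)$ with $\lambda_{d-1}(\partial A)=0$.

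To conclude weak convergence on $\D$, I would invoke the Portmanteau theorem. The limit measure $\tau_\infty$ has a bounded continuous density with respect to Lebesgue measure on $\D$, so every set with $\lambda_{d-1}$-null topological boundary is a $\tau_\infty$-continuity set; since we have shown $\tau_u(A) \to \tau_\infty(A)$ on exactly that class, weak convergence follows.

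I do not expect any significant obstacle here: the combinatorial/analytic work was already done in Lemma \ref{lemma:cubeSubsetConditionalProbability} and the Corollary. The only minor point to watch is that the Corollary was stated for subsets of $I_{d-1}$ and we want subsets of $\D$, but this is harmless because $\D \subset I_{d-1}$ and the asymptotic formula for $\P(\widetilde{Z}_u \in A)$ depends only on $A$ via the integral $\int_A (1+x_d)\,dx$, so passing from the cube normalization to the ball normalization is just the arithmetic shown above.
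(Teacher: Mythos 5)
Your proof is correct and takes essentially the same route as the paper: both use the already-established asymptotics of $\P(\widetilde Z_u\in A)$ on $\lambda_{d-1}$-continuity sets (the paper via the measures $\rho_u$ from Lemma \ref{lemma:cubeSubsetConditionalProbability}, you via the equivalent Corollary formula \eqref{eq:probabilityOfSetA}), compute the normalizer $\int_\D(1+x_d)\,dx=\lambda_{d-1}(\D)$ by the symmetry $x_d\mapsto -x_d$, and then invoke the Portmanteau theorem.
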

\begin{proof}
Recall $\rho_\infty$ defined in \eqref{j12.1}.
We have
\begin{align*}
\rho_{\infty}(\D) 
= \int_{\D}dx_2\ldots dx_{d-1}\, (1+x_d) dx_d
=\int_{\D}dx_2\ldots dx_{d-1} dx_d
=\frac{\pi^{(d-1)/2}}{\Gamma\left(\frac{d+1}{2}\right)}.
\end{align*}
Suppose that $A\in \mathcal{B}(\D)$. Note that $\tau_{\infty}(\partial A) =0$ if and only if 
$\lambda_{d-1}(\partial A)=0 $ if and only if $\rho_{\infty}(\partial A)=0$.
Also note that $\lambda_{d-1}(\partial \D)=0$. These remarks imply that if $\lambda_{d-1}(\partial A)=0$ then 
$$\tau_{u}(A)=\frac{\rho_u(A)}{\rho_u(\D)}\to 
\frac{\rho_{\infty}(A)}{\rho_{\infty}(\D)}=
\tau_{\infty}(A),\quad \textrm{as}\ u\to\infty. $$
Tha claim now follows from Portmanteau's theorem.
\end{proof}

\begin{propo}\label{prop:tauInftyProperties}
Measure $\tau_\infty$ has the following properties.
\begin{enumerate}[(a)]
 \item $\tau_{\infty}(B_r(0))=r^{d-1}$ for $r\leq 1$.
 \item If $y=(0,\ldots,0,-1+r)$ then
$\tau_{\infty}(B_r(y))=r^d$ for $r\leq 1$.
\end{enumerate}
\end{propo}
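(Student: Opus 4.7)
The plan is a direct unwinding of the definition \eqref{eq:tau:infty}: in both parts the integrand $(1+x_d)$ splits into a constant piece and a piece odd in $x_d$ around the center of the ball, and the odd piece integrates to $0$, so everything reduces to a volume computation in $\R^{d-1}$ using $\mathrm{vol}(B_r(0)) = \frac{\pi^{(d-1)/2}}{\Gamma((d+1)/2)} r^{d-1}$.

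For part (a), since $B_r(0)\subset\D$ when $r\le 1$, I would write
\[
\tau_\infty(B_r(0)) = \frac{\Gamma\!\left(\frac{d+1}{2}\right)}{\pi^{(d-1)/2}}\int_{B_r(0)}(1+x_d)\,dx_2\cdots dx_d.
\]
The ball $B_r(0)$ is invariant under $x_d\mapsto -x_d$, so $\int_{B_r(0)} x_d\,dx=0$, and the integral reduces to $\mathrm{vol}(B_r(0))$. Substituting the volume formula cancels the normalizing constant exactly and gives $r^{d-1}$.

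For part (b), with $y=(0,\dots,0,-1+r)$, I would first verify $B_r(y)\subset\D$: for any $p\in B_r(y)$,
\[
|p|\le |y|+|p-y|\le (1-r)+r=1,
\]
so the integral representation of $\tau_\infty$ applies. Then change variables $u = x - y$, so that $B_r(y)$ becomes $B_r(0)$ in the $u$-coordinates and $1+x_d = r+u_d$. By the same odd symmetry $\int_{B_r(0)} u_d\,du=0$, and the integral becomes $r\cdot\mathrm{vol}(B_r(0))$. Plugging in the volume formula gives $\tau_\infty(B_r(y)) = r^d$.

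There is no real obstacle here; the only small thing to be careful about is that \eqref{eq:tau:infty} was stated for $A\in\mathcal{B}(\D)$, which is why the containment check $B_r(y)\subset\D$ in (b) is necessary before computing. Both identities then fall out from the symmetry of the ball in its own $x_d$-direction combined with the standard volume of the Euclidean ball in $\R^{d-1}$.
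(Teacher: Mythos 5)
Your proof is correct and matches the paper's argument essentially line for line: in (a) you kill $\int_{B_r(0)} x_d\,dx$ by the reflection symmetry $x_d\mapsto -x_d$ and reduce to the ball volume; in (b) you translate by $y$ so the integrand becomes $r+u_d$ and the odd part vanishes again. The only extra you add is the explicit check $B_r(y)\subset\D$, which the paper leaves implicit — a reasonable bit of rigor but not a different approach.
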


\begin{proof}
By symmetry,
\begin{align*}
\tau_{\infty}(B_r(0))
&=\frac{\Gamma\left(\frac{d+1}{2}\right)}{\pi^{(d-1)/2}}\int_{B_r(0)}dx_2\ldots dx_{d-1}\,(1+x_d) \, dx_d\\
&= \frac{\Gamma\left(\frac{d+1}{2}\right)}{\pi^{(d-1)/2}}\int_{B_r(0)}dx_2\ldots dx_{d-1} dx_d=
\frac{\Gamma\left(\frac{d+1}{2}\right)}{\pi^{(d-1)/2}}\lambda_{d-1}(B_r(0))
= r^{d-1}. 
\end{align*}
For part (b), we apply the change of coordinates $(h_2,\ldots, h_d-1+r)=(x_1,\ldots, x_d)$, and we obtain
\begin{align*}
 \frac{\Gamma\left(\frac{d+1}{2}\right)}{\pi^{(d-1)/2}}
&\int_{B_r(y)}dx_2\ldots dx_{d-1}\, x_d\, dx_d=
\frac{\Gamma\left(\frac{d+1}{2}\right)}{\pi^{(d-1)/2}}\int_{B_r(0)}dh_2\ldots dh_{d-1}\, (h_d+r)\, dh_d\\
 =&\frac{\Gamma\left(\frac{d+1}{2}\right)}{\pi^{(d-1)/2}} r
\int_{B_r(0)}dh_2\ldots dh_{d-1}\, dh_d+
\frac{\Gamma\left(\frac{d+1}{2}\right)}{\pi^{(d-1)/2}}
\int_{B_r(0)}dh_2\ldots dh_{d-1}\, h_d\, dh_d\\
 =&\frac{\Gamma\left(\frac{d+1}{2}\right)}{\pi^{(d-1)/2}} r \lambda_{d-1}(B_r(0))=r^d .
\end{align*}
\end{proof}

Let $\Sph =\{\sig\in \R^{d-1}: \sig_1^2+\ldots+\sig_{d-1}^2=1\}$. We will analyze the trajectory of a light ray starting from $(-u,\sig)$, where $\sig$ is any point in $\Sph$.
For $\sig\in \Sph $ let 
$U^{\sig}$ be any unitary linear operator on $\R^{d-1}$ such that $U^{\sig}(0,\ldots,-1)=\sig$. The operator $U^{\sig}$ can be identified with an orthogonal matrix. 
We will use ${}^*$ to denote the transpose.
In particular, $U^{\sig *}$ will denote the inverse operator to $U^\sig$.

Let 
\begin{equation}\label{def:ZUSigma}
Z_{u,\sig}=(0,\widetilde{Z}_{u,\sig})
\end{equation}
 be the point where the light ray starting at $(-u,\sig)$ intersects the plane $\{x:x_1=0\}$.

\begin{remark}\label{j13.4}
(i) It is easy to see that for $A\in \mathcal{B}(\D)$, the probability   $\P(\widetilde{Z}_{u}\in U^{\sig*}(A))$ is equal to 
$\P(\widetilde{Z}_{u,\sig}\in A)$. Hence, it depends on $u,\sig$ and $A$ but it does not depend on the choice of $U^\sig$.

(ii)
It follows from the first part of the remark and Proposition \ref{thm:reflection:plane:intersection} that
for $\sig\in \Sph$, probability measures
$\P(\widetilde{Z}_{u,\sig}\in \cdot\mid\widetilde{Z}_{u,\sig}\in \D)$ converge weakly to the probability measure $\tau_{\infty}\circ U^{\sig *}$,
as $u\to \infty$.
\end{remark}

\begin{lemma}
For $\sig\in \Sph$ and $A\in \mathcal{B}(\D)$,
\begin{align}
 &\tau_{\infty}(U^{\sig*}(A))
=\frac{\lambda_{d-1}(A)}{\lambda_{d-1}(\D)}-\frac{1}{\lambda_{d-1}(\D)}\int_{A}(\sig_1x_2+\ldots +\sig_{d-1}x_d)\, dx_2\ldots dx_d, \label{eq:tau_inftyU}\\
 &\int_{\Sph} \tau_{\infty}(U^{\sig*}(A))\P(\Sigma\in d\sig)=\frac{\lambda_{d-1} (A)}{\lambda_{d-1}(\D)},
\label{eq:sphere:integral:tau}
\end{align}
where $\Sigma$ is a random vector uniformly distributed  on $\Sph$.
\end{lemma}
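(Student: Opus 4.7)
For part (a), my plan is to start from the definition \eqref{eq:tau:infty} of $\tau_\infty$ applied to the set $U^{\sig*}(A)$, and then apply the change of variables $x = U^{\sig*}(y)$. Since $U^{\sig*}$ is orthogonal, its Jacobian determinant is $1$, so the Lebesgue measure is preserved, and the domain of integration becomes $A$. I also note that the prefactor $\Gamma((d+1)/2)/\pi^{(d-1)/2}$ equals $1/\lambda_{d-1}(\D)$, since $\lambda_{d-1}(\D) = \pi^{(d-1)/2}/\Gamma((d+1)/2)$.

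The key identification is what happens to the last coordinate $x_d$ after the substitution. Writing $e$ for the standard basis vector in $\R^{d-1}$ corresponding to the $x_d$-coordinate, we have $x_d = \langle x, e\rangle = \langle U^{\sig*}(y), e\rangle = \langle y, U^\sig(e)\rangle$. Since by construction $U^\sig(0,\dots,0,-1) = \sig$, we have $U^\sig(e) = -\sig$, so $x_d = -\langle y, \sig\rangle = -(\sig_1 y_2 + \dots + \sig_{d-1} y_d)$. Substituting this into $(1+x_d)$ and splitting the integral yields exactly the formula \eqref{eq:tau_inftyU}. I expect the only subtlety to be keeping the index shift between $\sig$ (indices $1,\dots,d-1$) and $x$ (indices $2,\dots,d$) straight; but once the inner-product structure is written coordinate-free, this is mechanical.

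For part (b), I would insert the identity just proved, exchange the order of integration using Fubini, and compute
\begin{align*}
\int_{\Sph}\tau_{\infty}(U^{\sig*}(A))\,\P(\Sigma\in d\sig)
&=\frac{\lambda_{d-1}(A)}{\lambda_{d-1}(\D)}
-\frac{1}{\lambda_{d-1}(\D)}\int_{A}\sum_{i=1}^{d-1}x_{i+1}\,\E[\Sigma_i]\,dx_2\dots dx_d.
\end{align*}
Because $\Sigma$ is uniform on the sphere $\Sph$, its distribution is invariant under reflection $\sig_i\mapsto -\sig_i$, so $\E[\Sigma_i]=0$ for every $i$. The correction term therefore vanishes, giving the claim \eqref{eq:sphere:integral:tau}. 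The only thing to check carefully here is the applicability of Fubini, which is immediate since $A\subset\D$ is bounded and the integrand is bounded.
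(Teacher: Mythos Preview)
Your proof is correct and follows essentially the same route as the paper: the paper also applies the change of variables through $U^{\sig*}$, identifies $(U^{\sig*}x)_d = -\sig^*x$ via $U^\sig(0,\dots,0,1)=-\sig$, and then observes that the correction term vanishes upon averaging over the symmetric distribution of $\Sigma$ on $\Sph$. Your treatment is slightly more explicit about the Fubini step and the vanishing of $\E[\Sigma_i]$, but the substance is identical.
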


\begin{proof}
 We have from $(\ref{eq:tau:infty})$,
\begin{align*}
 \tau_\infty(U^{\sig*}(A))&= 
\frac{\Gamma\left(\frac{d+1}{2}\right)}{\pi^{(d-1)/2}}
\int_{U^{\sig*}(A)}dx_2\ldots dx_{d-1}(1+x_d)dx_d\\
&= \frac{\Gamma\left(\frac{d+1}{2}\right)}{\pi^{(d-1)/2}}
\int_{U^{\sig*}(A)}dx_2\ldots dx_{d-1}dx_d
+\frac{\Gamma\left(\frac{d+1}{2}\right)}{\pi^{(d-1)/2}}
\int_{U^{\sig*}(A)}x_ddx_2\ldots dx_{d-1}dx_d\\
&= \frac{\lambda_{d-1}(A)}{\lambda_{d-1}(\D)}
+\frac{\Gamma\left(\frac{d+1}{2}\right)}{\pi^{(d-1)/2}}
\int_{A}(U^{\sig*}x)_ddx_2\ldots dx_{d-1}dx_d.
\end{align*}
By definition,  $(U^{\sig*}x)_d= (U^{\sig}(0,\ldots,0,1))^*x= -\sig^*x$. 
This and the last displayed formula imply \eqref{eq:tau_inftyU}.

Fromula $(\ref{eq:sphere:integral:tau})$ follows from the fact that
the integral in $(\ref{eq:tau_inftyU})$
becomes $0$ since we are integrating over a symmetric surface.
\end{proof}

\begin{lemma}\label{lemma:continuity:convergence}
If $A\in \mathcal{B}(\D)$ and $\lambda_{d-1}(\partial A)=0$ then $\P(\widetilde{Z}_{u,\sig}\in A|\widetilde{Z}_{u,\sig}\in \D)$ converges uniformly to $\tau_{\infty}(U^{\sig*}(A))$, as $u\to \infty$.
\end{lemma}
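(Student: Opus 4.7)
The first step is to reduce via rotational invariance. Since $U^{\sig*}$ is orthogonal and $\D$ is the Euclidean unit ball in $\R^{d-1}$, we have $U^{\sig*}(\D)=\D$; combined with Remark~\ref{j13.4}(i), this gives
\[
\P(\widetilde Z_{u,\sig}\in A\mid \widetilde Z_{u,\sig}\in\D)
=\frac{\P(\widetilde Z_u\in U^{\sig*}(A))}{\P(\widetilde Z_u\in\D)}
=\tau_u(U^{\sig*}(A)),
\]
so the claim is equivalent to
\[
\sup_{\sig\in\Sph}\bigl|\tau_u(U^{\sig*}(A))-\tau_\infty(U^{\sig*}(A))\bigr|\to 0\qquad\text{as }u\to\infty.
\]
My plan is to establish the stronger statement that $\tau_u\to\tau_\infty$ in total variation on $\D$; since total-variation convergence gives uniform convergence over \emph{all} Borel subsets of $\D$, this automatically yields uniform convergence over the family $\{U^{\sig*}(A):\sig\in\Sph\}$.

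To obtain total-variation convergence of probability measures I would appeal to Scheffé's theorem, which reduces the task to pointwise almost-everywhere convergence of Lebesgue densities. Using \eqref{eq:probabilityOfSetA} with $A=\D$ to control the normalisation $\P(\widetilde Z_u\in\D)$, the required density statement is
\[
c\,u^d f_u(x)\longrightarrow 1+x_d\qquad\text{for a.e. }x\in\D,
\]
for an explicit constant $c>0$. Proposition~\ref{propo:densityAsymptotics} establishes this at the single point $x=(0,\ldots,0)$. To extend it to a general $x\in\D^\circ$, I would re-run the argument of Lemma~\ref{lemma:cubeSubsetConditionalProbability} with small cubes centred at $x$ in place of the corner-anchored cubes $I_{t_2,\ldots,t_d}$: the event $\widetilde Z_u\in$ (small cube around $x$) translates, for large $u$, into $\sin\Phi_k\approx x_{k+1}/u$ for $k=1,\ldots,d-2$ and $\cot\Theta\approx (1+x_d)/u$. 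The densities of $\sin\Phi_k$ at $x_{k+1}/u$ tend to $1/\pi$, the density of $\cot\Theta$ at $(1+x_d)/u$ is asymptotic to $(1+x_d)/(2u)$ (the factor $(1+x_d)$ coming from the quadratic vanishing of the CDF $Y\mapsto\frac{1}{2}(1-1/\sqrt{1+Y^2})$ at $Y=0$), and the Jacobian of the change of variables is asymptotically $u^{d-1}$; these combine to give the stated asymptotics for $f_u(x)$.

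The principal obstacle is this extension of Proposition~\ref{propo:densityAsymptotics}: one must verify that the estimates behind Lemma~\ref{lemma:cubeSubsetConditionalProbability} remain valid when the base point is shifted from $(-1,\ldots,-1)$ to an arbitrary $x\in\D^\circ$, and that the various error terms are controlled. Once the pointwise density convergence is in hand, Scheffé's theorem delivers $\tau_u\to\tau_\infty$ in total variation on $\D$, from which the uniform convergence asserted in the lemma follows immediately.
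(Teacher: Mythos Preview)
Your reduction to showing $\sup_{\sig\in\Sph}|\tau_u(U^{\sig*}(A))-\tau_\infty(U^{\sig*}(A))|\to 0$ is correct, and the plan via total-variation convergence and Scheff\'e's theorem is sound: if indeed $u^d f_u(x)\to c(1+x_d)$ for a.e.\ $x\in\D$, then $\tau_u\to\tau_\infty$ in total variation, and uniform convergence over the family $\{U^{\sig*}(A):\sig\in\Sph\}$ (indeed over \emph{all} Borel subsets of $\D$) follows at once, without even using the hypothesis $\lambda_{d-1}(\partial A)=0$. Your heuristic for the density asymptotics at a general $x$ is the right one; the transformation \eqref{eq:planeIntersectionCoordinates} has a triangular Jacobian structure, and the computation localises to $\Phi_k\approx x_{k+1}/u$ and $\cot\Theta\approx (1+x_d)/u$ exactly as you describe. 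The ``principal obstacle'' you flag is genuine but not serious: it is the same calculation as in Lemma~\ref{lemma:cubeSubsetConditionalProbability}, merely with the cube shifted.

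The paper takes a different and somewhat softer route. Rather than sharpening the density asymptotics, it uses only two qualitative facts: (i) the conditional densities $f_u/\P(\widetilde Z_u\in\D)$ are \emph{uniformly bounded} for $u\ge 1$, and (ii) for any Borel $A\subset\D$, the map $U\mapsto \lambda_{d-1}(U(A)\triangle A)$ is continuous at the identity (an $L^1$-continuity-of-translation statement). These two ingredients yield equicontinuity of the family $\{\sig\mapsto \tau_u(U^{\sig*}(A))\}_{u\ge 1}$ on the compact sphere~$\Sph$; combined with the already-established pointwise convergence (Proposition~\ref{thm:reflection:plane:intersection} and Remark~\ref{j13.4}(ii)), uniform convergence follows by a standard $\varepsilon/3$ argument. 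The trade-off: the paper's argument is more economical, needing only a crude density bound rather than pointwise density limits, but it exploits the rotational structure of the family and the compactness of~$\Sph$, and it genuinely uses $\lambda_{d-1}(\partial A)=0$ for the pointwise step. Your approach asks for more computation up front but rewards you with a stronger conclusion (uniformity over all Borel sets).
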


\begin{proof}
Let $A\triangle B$ denote the symmetric difference of sets $A$ and $B$.
It is elementary to prove that for any $A\in \mathcal{B}(\D)$ and $\delta >0$ there exists $\eps>0$ such that if $U:\D \to \D$ is an isometry 
satisfying $|U(x) - x| < \eps$ then  $\lambda_{d-1}(U(A) \triangle A) < \delta$.

It is easy to see that there exists $c_1 < \infty$ such that for every  $u\geq 1$, the density of the measure 
$\P(\widetilde{Z}_{u}\in \, \cdot\mid \widetilde{Z}_{u}\in \D)$
is bounded above by $c_1$ (a formal proof could be based on ideas used in the proof of Lemma \ref{lemma:cubeSubsetConditionalProbability}).

Fix any $A\in \mathcal{B}(\D)$ and $\delta_1 >0$. 
Find $\eps>0$ so small that if $U:\D \to \D$ is an isometry 
satisfying $|U(x) - x| < \eps$ then  $\lambda_{d-1}(U(A) \triangle A) < \delta_1/c_1$.
Suppose that $y, z \in \Sph$ and $|y-z| < \eps$. Then there exists an isometry $U:\D \to \D$ such that $U(y) = z$ and $|U^*(x) - x| < \eps$ for all $x\in \D$. We now  use Remark \ref{j13.4} (i) to see that 
\begin{align*}
\P(\widetilde{Z}_{u,\sig}\in A\mid \widetilde{Z}_{u,\sig}\in \D)
&=
\P(\widetilde{Z}_{u}\in U^{\sig *}(A)\mid \widetilde{Z}_{u}\in \D)\\
&=
\P(\widetilde{Z}_{u}\in (U\circ U^{\sig })^*(U^*(A))\mid \widetilde{Z}_{u}\in \D)\\
&\leq
\P(\widetilde{Z}_{u}\in (U\circ U^{\sig })^*(A)\mid \widetilde{Z}_{u}\in \D) + (\delta_1/c_1) c_1\\
&=
\P(\widetilde{Z}_{u,z}\in  U^{z * }(A)\mid \widetilde{Z}_{u,z}\in \D) + \delta_1.
\end{align*}
We see that the functions $y \to \P(\widetilde{Z}_{u,\sig}\in A\mid \widetilde{Z}_{u,\sig}\in \D)$ are equicontinuous for $u\geq 1$. 
The lemma now follows from the pointwise convergence of
$\P(\widetilde{Z}_{u,\sig}\in A\mid \widetilde{Z}_{u,\sig}\in \D)$  to $\tau_{\infty}(U^{\sig*}(A))$, as $u\to \infty$, proved 
in Proposition \ref{thm:reflection:plane:intersection}
and Remark \ref{j13.4} (ii).
\end{proof}

\begin{propo}\label{prop:unifrom:ratio}
If $A\in \mathcal{B}(\D)$, $\lambda_{d-1}(A)>0$ and $\lambda_{d-1}(\partial A)=0$ then the ratio 
$$\frac{\P(\widetilde{Z}_{u,\sig}\in A\mid \widetilde{Z}_{u,\sig}\in \D)}{\tau_{\infty}(U^{\sig*}(A))}$$
converges uniformly to $1$ as $u\to\infty$.
\end{propo}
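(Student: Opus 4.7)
The plan is to upgrade the uniform convergence of the numerator $\P(\widetilde{Z}_{u,\sig}\in A\mid\widetilde{Z}_{u,\sig}\in\D)$ to $\tau_\infty(U^{\sig*}(A))$ in $\sig\in\Sph$, already supplied by Lemma \ref{lemma:continuity:convergence}, to a uniform convergence of the ratio. The one extra ingredient needed is a lower bound $\tau_\infty(U^{\sig*}(A))\geq m_A$ with $m_A>0$ independent of $\sig$. Given such a bound, if $\varepsilon_u:=\sup_{\sig}\bigl|\P(\widetilde{Z}_{u,\sig}\in A\mid\widetilde{Z}_{u,\sig}\in\D)-\tau_\infty(U^{\sig*}(A))\bigr|\to 0$, then
\[
\sup_{\sig\in\Sph}\left|\frac{\P(\widetilde{Z}_{u,\sig}\in A\mid\widetilde{Z}_{u,\sig}\in\D)}{\tau_\infty(U^{\sig*}(A))}-1\right|\leq \frac{\varepsilon_u}{m_A}\longrightarrow 0,
\]
which is exactly the conclusion.

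For the uniform lower bound I would rewrite \eqref{eq:tau_inftyU}, using $\lambda_{d-1}(\D)=\pi^{(d-1)/2}/\Gamma((d+1)/2)$, as
\[
\tau_\infty(U^{\sig*}(A))=\frac{1}{\lambda_{d-1}(\D)}\int_A\bigl(1-\sig\cdot x\bigr)\,dx_2\cdots dx_d,
\]
where $\sig\cdot x:=\sig_1 x_2+\cdots+\sig_{d-1}x_d$. By Cauchy--Schwarz, $\sig\cdot x\leq|\sig|\,|x|=|x|$ on $\D$, so the integrand is bounded below by the rotation-invariant function $1-|x|$, which is nonnegative on $\D$ and strictly positive on its interior. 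Since $\lambda_{d-1}(A)>0$ and $\partial\D$ is Lebesgue null, the set $A\cap\{|x|<1\}$ has positive Lebesgue measure, and hence
\[
\tau_\infty(U^{\sig*}(A))\geq \frac{1}{\lambda_{d-1}(\D)}\int_A\bigl(1-|x|\bigr)\,dx_2\cdots dx_d=:m_A>0,
\]
the right-hand side being manifestly independent of $\sig$.

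The main (and essentially only) obstacle is this uniform lower bound. A priori one could worry that the rotation $U^{\sig*}$ drags $A$ toward the point $(0,\ldots,0,-1)$, where the density $1+x_d$ of $\tau_\infty$ vanishes, forcing $\tau_\infty(U^{\sig*}(A))\to 0$ along some sequence of $\sig$'s. The observation that resolves this is that after pulling the integral back to $A$ itself, the density $1-\sig\cdot x$ is controlled from below by the rotation-invariant quantity $1-|x|$, so no such concentration can occur. Combining the bound $m_A>0$ with Lemma \ref{lemma:continuity:convergence} then completes the proof.
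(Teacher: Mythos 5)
Your argument is correct and follows the same overall strategy as the paper: reduce the ratio statement to a uniform lower bound $\inf_\sig\tau_\infty(U^{\sig*}(A))>0$ together with the uniform convergence of the numerator supplied by Lemma \ref{lemma:continuity:convergence}. The one point of divergence is how the lower bound is obtained: the paper observes that $\sig\mapsto\tau_\infty(U^{\sig*}(A))$ is positive and continuous (via \eqref{eq:tau_inftyU}) and invokes compactness of $\Sph$, whereas you produce an explicit, $\sig$-independent minorant by pointwise bounding the density $1-\sig\cdot x$ below by the rotation-invariant $1-|x|$ via Cauchy--Schwarz. Your route is slightly more constructive and sidesteps the need to verify continuity of $\sig\mapsto\tau_\infty(U^{\sig*}(A))$, but otherwise both proofs are the same.
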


\begin{proof}
It is easy to see that $\tau_{\infty}(U^{\sig*}(A))>0$ for every $\sig\in\Sph$ because  $\lambda_{d-1}(A)>0$.
It follows from \eqref{eq:tau_inftyU} that the function $\sig\mapsto \tau_{\infty}(U^{\sig*}(A))$  is continuous.
Therefore, due to compactness of $\Sph$, 
we have $m:=\inf\{\tau_{\infty}(U^{\sig*}(A)):\sig\in \Sph\}>0$. Hence, 
\begin{align*}
 &\left|\frac{\P(\widetilde{Z}_{u,\sig}\in A\mid \widetilde{Z}_{u,\sig}\in \D)}{\tau_{\infty}(U^{\sig*}(A))}-1\right|
\leq \frac{1}{m}|\P(\widetilde{Z}_{u,\sig}\in A\mid \widetilde{Z}_{u,\sig}\in \D)-\tau_{\infty}(U^{\sig*}(A))|,
\end{align*}
and by Lemma \ref{lemma:continuity:convergence}  we have uniform convergence.
\end{proof}

\section{Light ray reflections}\label{sec:construction}

In our model, 
the reflection points of a light ray form a Markov chain with the state space being the tube
$$\mathcal{S}=\{x\in \R^d\ :\ x_1\in \R, x_2^2+\ldots +x_d^2 =1\}.$$ 
To  simplify notation, we  shift the coordinate system of our model so that 
the light source is located in the hyperplane $\{x:x_1=0\}$, and the end of the tube is located at $\{x:x_1=s\}$, with $s>0$. 

We will now provide a formal description of our model. 
Let $\Sigma$ be a random vector uniformly distributed on the $(d-1)$-dimensional sphere $\Sph$.
Let $(\Theta^{(k)})_{k\geq 1}$, $(\Phi_1^{(k)})_{k\geq 1}$, \ldots, $(\Phi_{d-2}^{(k)})_{k\geq 1}$
be sequences such that:
\begin{itemize}
\item Each of these sequences is i.i.d.;
 \item $\Theta^{(k)}$ has the density $\frac{1}{2}\cos \theta$ on $[-\pi/2,\pi/2]$ for  $k\geq 1$;
 \item $\Phi_j^{(k)}$ is distributed as $U[-\pi/2,\pi/2]$ for all $k\geq 1$ and $j=1,\ldots, d-2$;
 \item All random variables $\Sigma$, $(\Theta^{(k)})_{k\geq 1}$, $(\Phi_1^{(k)})_{k\geq 1}$, \ldots, $(\Phi_{d-2}^{(k)})_{k\geq 1}$
are jointly independent.
\end{itemize}

We define a Markov chain $\{S_k, k\geq 0\}$  by setting $S_0=(0,\Sigma)$,
 and for $k\geq 1$,
\begin{align}
R_k&=\frac{2\cos\Theta^{(k)}}{1-\left(\sin\Theta^{(k)}\cos \Phi_{d-2}^{(k)}\ldots\cos\Phi_1^{(k)}\right)^2},
\label{j14.1}\\
S_k &= (S^{x_1}_k, S^{x_{2,\ldots, d}}_k), \text{  where  }
S^{x_1}_k \in \R, S^{x_{2\ldots d}}_k \in \Sph, \nonumber \\
 S^{x_1}_k&=S^{x_1}_{k-1}+R_{k}\sin \Theta^{(k)}\cos \Phi_1^{(k)}\cdots \cos \Phi_{d-2}^{(k)},
\label{j14.2}\\
S^{x_{2,\ldots, d}}_k&=U^{S^{x_{2,\ldots, d}}_{k-1}}(G_{2,\ldots, d}(\Theta^{(k)}, \Phi_1^{(k)},\ldots ,\Phi_{d-2}^{(k)})),\nonumber
\end{align}
where:
\begin{itemize}
 \item $U^{x_{2,\ldots, d}}$ is a unitary operator that maps $(0, \ldots, 0,-1)\in \R^{d-1}$ 
to $x_{2,\ldots, d}\in \Sph$;
 \item $G_{2,\ldots, d}(\Theta, \Phi_1,\ldots ,\Phi_{d-2})$ represents the last $(d-1)$
coordinates of $(\ref{eq:reflvector})$ where $R$ is given by $(\ref{eq:radius})$.
\end{itemize}

\begin{lemma}\label{prop:markovChainRandomWalk}
\begin{enumerate}[(a)]
 \item $(S^{x_1}_k)$ is a symmetric random walk.
 \item The uniform distribution on $\Sph$ is an invariant distribution 
for the Markov chain $S^{x_{2,\ldots, d}}_k$.
 \item For every random time $T$ measurable with respect to the $\sigma$-field $\sigma(S_k^{x_1}:k\geq 0)$, 
$S^{x_{2,\ldots, d}}_T$ is uniformly distributed on  $\Sph$ and independent of $\sigma(S_k^{x_1}:k\geq 0)$.
\end{enumerate}
\end{lemma}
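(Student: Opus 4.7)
Part (a) is a direct computation. From \eqref{j14.1}--\eqref{j14.2} the increment $S^{x_1}_k-S^{x_1}_{k-1}$ equals $R_k\sin\Theta^{(k)}\cos\Phi_1^{(k)}\cdots\cos\Phi_{d-2}^{(k)}$, a measurable function of the i.i.d.\ block $(\Theta^{(k)},\Phi_1^{(k)},\ldots,\Phi_{d-2}^{(k)})$, so the increments are i.i.d. Because $R_k$ depends on $\Theta^{(k)}$ only through $\cos\Theta^{(k)}$ and $\sin^2\Theta^{(k)}$, both even in $\Theta^{(k)}$, the whole increment is an odd function of $\Theta^{(k)}$, and the symmetry of the density $\tfrac{1}{2}\cos\theta$ on $(-\pi/2,\pi/2)$ gives the symmetric distribution.

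For (b) and (c) I plan to prove (c) and obtain (b) as the special case $T\equiv k$. Set $\sig_0:=(0,\ldots,0,-1)\in\Sph$ and augment the probability space with a Haar-distributed $W_0\in O(d-1)$, independent of all $(\Theta^{(k)},\Phi^{(k)}_j)_{k,j}$, satisfying $W_0\sig_0=\Sigma$. Such a $W_0$ exists by taking $W_0:=W_\Sigma W'$, where $\Sigma\mapsto W_\Sigma$ is a measurable section of the projection $O(d-1)\twoheadrightarrow\Sph$ and $W'$ is an independent Haar element of the stabilizer of $\sig_0$ (isomorphic to $O(d-2)$). I then build inductively a sequence $\hat Y_k\in\Sph$, measurable with respect to $\sigma((\Theta^{(j)},\Phi^{(j)}_\ell)_{j\leq k})$ and hence independent of $W_0$, so that $S^{x_{2,\ldots,d}}_k=W_0\hat Y_k$ for every $k\geq 0$. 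The base case is $\hat Y_0:=\sig_0$; in the inductive step I select $U^{S^{x_{2,\ldots,d}}_{k-1}}:=W_0\bar U^{\hat Y_{k-1}}$ for any measurable $\bar U^{\hat Y_{k-1}}$ taking $\sig_0$ to $\hat Y_{k-1}$. This composition is a valid selection because it sends $\sig_0$ to $W_0\hat Y_{k-1}=S^{x_{2,\ldots,d}}_{k-1}$, and the recursion then gives $S^{x_{2,\ldots,d}}_k=W_0\hat Y_k$ with $\hat Y_k:=\bar U^{\hat Y_{k-1}}(G_{2,\ldots,d}(\Theta^{(k)},\Phi^{(k)}))$.

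Because $(S^{x_1}_k)_{k\geq 0}$ is a function of the $(\Theta^{(j)},\Phi^{(j)}_\ell)$'s alone, it is independent of $W_0$; thus any random time $T$ measurable with respect to $\sigma(S^{x_1}_k:k\geq 0)$ and the associated $\hat Y_T$ are both independent of $W_0$. Conditionally on $\sigma(S^{x_1}_k:k\geq 0)$, $S^{x_{2,\ldots,d}}_T=W_0\hat Y_T$ is therefore the image of a fixed element of $\Sph$ under a Haar-distributed orthogonal transformation, and hence is uniform on $\Sph$ and independent of $\sigma(S^{x_1}_k:k\geq 0)$; this is (c), and specializing $T\equiv k$ recovers (b). The main subtlety that must be justified carefully is that the trajectory-dependent rule $U^{S^{x_{2,\ldots,d}}_{k-1}}=W_0\bar U^{\hat Y_{k-1}}$ is not a single measurable section $\sig\mapsto U^\sig$: Remark \ref{j13.4}(i) tells us the one-step transition distribution does not depend on the particular selection of $U^\sig$ among valid unitaries, and by the Markov property this single-step invariance extends to the joint distribution of the full trajectory, licensing the use of any convenient, possibly trajectory-dependent, family of selections at each step.
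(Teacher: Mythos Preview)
Your argument is correct, and it is really an explicit unpacking of the same idea the paper invokes in one sentence: the paper simply says that (b) and (c) ``follow from the fact that the initial position $S_0$ is governed by a random variable $\Sigma$, independent of all other random variables used in the construction,'' i.e.\ the cross-sectional component inherits rotational symmetry from the uniform, independent start. Your Haar-coupling makes this precise by writing $\Sigma=W_0\sigma_0$ with $W_0$ Haar on $O(d-1)$ and then factoring the chain as $S^{x_{2,\ldots,d}}_k=W_0\hat Y_k$ with $\hat Y_k$ measurable in the angle variables alone; the conclusion then follows from the elementary fact that a Haar rotation applied to a point independent of it is uniform on $\Sph$.

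Two remarks on presentation. First, you invoke Remark~\ref{j13.4}(i), which literally concerns the plane-intersection point $\widetilde Z_{u,\sigma}$ rather than the transition of $S^{x_{2,\ldots,d}}$. What you actually need (and what Remark~\ref{j13.4}(i) is itself a consequence of) is the slightly stronger statement that the \emph{direction law} of the reflected ray at a boundary point is invariant under the stabilizer of that point in $O(d-1)$; once that is granted, both the plane-intersection and the next reflection point are insensitive to the choice of $U^\sigma$, and your step-by-step Markov extension goes through. It would be cleaner to state this invariance directly rather than route it through Remark~\ref{j13.4}(i). Second, note that the paper never spells out this stabilizer-invariance either; your explicit coupling in fact clarifies exactly where the rotational symmetry of the Lambertian model enters, which is a genuine improvement over the paper's bare assertion.
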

\begin{proof} Part (a) follows from the definition of the model. Parts 
(b) and (c) follow from the fact that the initial position $S_0$ is governed by a random variable $\Sigma$, independent of all other random variables used in the construction.
\end{proof}

From now on, we will use $N_s$, $O_s$ and $U_s$ to denote random variables defined in \eqref{eq:NsMk}, but relative to the random walk $S^{x_1}_k$.
For future reference we record a formula for the position of the exit point
 $(s,Y_s)$ of the light ray from the tube.

\begin{lemma}\label{j15.1}
The light ray exits the tube at a point $(s,Y_s)$ where 
\begin{equation}
 Y_s=S^{x_{2,\ldots, d}}_{N_s-1}+\frac{s-S^{x_1}_{N_s-1}}{S^{x_1}_{N_s}-S^{x_1}_{N_s-1}}\left(S^{x_{2,\ldots, d}}_{N_s}- S^{x_{2,\ldots, d}}_{N_s-1}\right).
\end{equation}
\end{lemma}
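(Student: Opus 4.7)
The plan is to observe that the stated formula is essentially a linear interpolation, and that this is justified because between any two consecutive reflections the physical light ray is by definition a straight line segment in $\R^d$. The Markov chain $(S_k)$ records precisely the reflection points, so the segment from $S_{N_s-1}$ to $S_{N_s}$ (both of which lie on the cylinder wall $\ol C$) coincides with the trajectory of the ray over that interval of its flight.

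First I would invoke the definition of $N_s$ in $(\ref{eq:NsMk})$ applied to the random walk $(S^{x_1}_k)$: this gives
$S^{x_1}_{N_s-1}\le s< S^{x_1}_{N_s}$. Since the $x_1$-coordinate of the ray is a continuous and (on this interval) monotone function of arc length, the ray crosses the hyperplane $\{x:x_1=s\}$ exactly once between the reflections at $S_{N_s-1}$ and $S_{N_s}$, and it does so at an interior point of the exit disc (otherwise the walk would have a reflection on or past $\{x_1=s\}$ earlier than $N_s$). This crossing point is by definition the exit point $(s,Y_s)$ of the light ray from the tube.

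Then I would parametrize the straight segment as
\begin{equation*}
L(t)=S_{N_s-1}+t\bigl(S_{N_s}-S_{N_s-1}\bigr),\qquad t\in[0,1],
\end{equation*}
and solve $L(t)^{x_1}=s$ for $t$, obtaining
\begin{equation*}
t^{*}=\frac{s-S^{x_1}_{N_s-1}}{S^{x_1}_{N_s}-S^{x_1}_{N_s-1}}\in[0,1).
\end{equation*}
Reading off the last $d-1$ coordinates of $L(t^{*})$ yields
\begin{equation*}
Y_s=S^{x_{2,\ldots,d}}_{N_s-1}+t^{*}\bigl(S^{x_{2,\ldots,d}}_{N_s}-S^{x_{2,\ldots,d}}_{N_s-1}\bigr),
\end{equation*}
which is exactly the claimed formula. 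There is essentially no obstacle; the only point requiring a line of justification is that the segment in $\R^d$ between two consecutive reflection points really is the physical ray between them, which is built into the description of the model in Section \ref{sec:description} (the ray travels in a straight line until it next meets $\ol C$). The fact that $S^{x_{2,\ldots,d}}_{N_s-1}$ and $S^{x_{2,\ldots,d}}_{N_s}$ both lie on the unit sphere $\Sph\subset\R^{d-1}$ is irrelevant for the interpolation: $Y_s$ is a point in the open unit ball $\D$, not on its boundary, and the expression above is just the transverse coordinate of a straight line in ambient $\R^d$.
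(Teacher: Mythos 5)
Your proposal is correct and takes essentially the same approach as the paper, whose entire proof is the single observation that $(s,Y_s)$ lies on the line segment $[S_{N_s-1},S_{N_s}]$. You simply spell out the consequent linear interpolation and the solution for the parameter $t^*$ explicitly, which is a harmless elaboration of the same idea.
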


\begin{proof}
Note that $(s,Y_s)$ lies on the line segment $[S_{N_s-1},S_{N_s}]$.  
\end{proof}

\section{The tail of the step in $x_1$ direction}\label{sec:tail}

We will study the distribution of the step $X_k=S^{x_1}_k-S^{x_1}_{k-1}$
of the random walk $S^{x_1}_k$. To simplify notation, we will 
suppress $k$ in the notation, for example, we will write $X$ instead of $X_k$. In view of \eqref{j14.1} and \eqref{j14.2}, we may represent $X$ as
\begin{equation}\label{eq:X_Nd:def}
 X=\frac{2\cos\Theta\sin\Theta\cos \Phi_{d-2}\ldots\cos \Phi_1}{1-\sin\Theta^2\cos^2 \Phi_{d-2}\ldots\cos^2 \Phi_1},
\end{equation}
where, as usual, $\Theta$ has density \eqref{eq1}, $\Phi_1,\ldots, \Phi_{d-2}$ are $U(-\pi/2,\pi/2)$
and they are all independent.

\begin{propo}\label{prop:step3d} 
We have for $x>0$,
\begin{equation}
 \label{eq:tailX:Nd}
 \P(X>x)= \int_{\frac{x}{\sqrt{4+x^2}}}^1
 \P\left(\cos\Phi_{d-2}\ldots\cos \Phi_1>\frac{\sqrt{1-v^2+x^2}-\sqrt{1-v^2}}{xv}\right)dv.
\end{equation}
In the 3-dimensional case we have for $x>0$,
\begin{equation}\label{eq:tailX:3d}
 \P(X>x)= \frac{2}{\pi}\int_{\frac{x}{\sqrt{4+x^2}}}^1
 \cos^{-1}\left(\frac{\sqrt{1-v^2+x^2}-\sqrt{1-v^2}}{xv}\right)dv.
\end{equation}
\end{propo}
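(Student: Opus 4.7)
The plan is to condition on $V := \sin\Theta$, which has the uniform distribution on $(-1,1)$ by \eqref{lem:theta}, and to note that the remaining factor $F := \cos\Phi_1 \cdots \cos\Phi_{d-2}$ lies in $[0,1]$ almost surely. Rewriting \eqref{eq:X_Nd:def} compactly as
\begin{equation*}
X = \frac{2 V\sqrt{1-V^2}\,F}{1 - V^2 F^2},
\end{equation*}
the sign of $X$ agrees with the sign of $V$ since the denominator is positive. Hence, for $x > 0$, only the event $\{V > 0\}$ contributes, and independence of $V$ from $(\Phi_1, \dots, \Phi_{d-2})$ reduces the task to computing $\P(X > x \mid V = v)$ as a function of $v \in (0,1)$ and integrating against the density of $V$.

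Given $V = v \in (0,1)$, the inequality $X > x$ is equivalent to the quadratic inequality $x v^2 F^2 + 2v\sqrt{1-v^2}\,F - x > 0$ in $F$. Since the leading coefficient $xv^2$ is positive and the product of the roots is $-1/v^2 < 0$, the quadratic has exactly one positive root, which the quadratic formula identifies as
\begin{equation*}
F_0(v, x) := \frac{\sqrt{1 - v^2 + x^2} - \sqrt{1 - v^2}}{xv}.
\end{equation*}
Thus $\P(X > x \mid V = v) = \P(F > F_0(v,x))$ whenever $F_0(v,x) < 1$, and equals $0$ otherwise.

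To pin down the range of integration, solve $F_0(v,x) \leq 1$. This reduces to $\sqrt{1 - v^2 + x^2} \leq xv + \sqrt{1-v^2}$, and squaring (both sides are nonnegative) and simplifying gives $x^2(1-v^2) \leq 4v^2$, i.e.\ $v \geq x/\sqrt{4+x^2}$. Integrating the conditional probability against the density of $V$ over this range yields \eqref{eq:tailX:Nd}. For \eqref{eq:tailX:3d}, specialize to $d = 3$, so $F = \cos\Phi_1$, and apply Lemma \ref{lema:tail1} to replace $\P(F > t)$ by $\frac{2}{\pi}\arccos t$.

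The only step that requires some care is the quadratic inversion: one must confirm that the relevant solution set really is $F > F_0(v,x)$, rather than $F$ lying between two roots, and that the squaring argument used to derive the lower integration bound $x/\sqrt{4+x^2}$ does not introduce extraneous solutions. Both points are settled by the sign analysis above. All remaining computations are elementary algebra.
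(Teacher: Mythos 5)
Your argument mirrors the paper's proof step for step: condition on $V=\sin\Theta$, note that the sign of $X$ equals the sign of $V$, rewrite $\{X>x\}$ given $V=v>0$ as a quadratic inequality in $\mathbf{F}=\cos\Phi_1\cdots\cos\Phi_{d-2}$, isolate the unique positive root $F_0(v,x)$, observe that $F_0(v,x)\le 1$ forces $v\ge x/\sqrt{4+x^2}$, and integrate over $V$. Your remark that the product of the roots of the quadratic is $-1/v^2<0$ is a slightly cleaner justification that the solution set is the half-line $\{\mathbf{F}>F_0(v,x)\}$ rather than an interval between two roots, a point the paper asserts without comment, but it is the same argument.

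One caution, which applies equally to your final step and to the paper's: $V\sim U(-1,1)$ has density $\tfrac{1}{2}$ on $(-1,1)$, so integrating the conditional probability $\P\left(\mathbf{F}>F_0(v,x)\wedge 1\right)$ against the law of $V$ over $v\in(0,1)$ gives $\tfrac{1}{2}\int_{x/\sqrt{4+x^2}}^1\P(\mathbf{F}>F_0(v,x))\,dv$, which is half of the right-hand side of \eqref{eq:tailX:Nd}. A quick sanity check: as $x\to 0^+$ the integrand tends to $1$, so the integral in \eqref{eq:tailX:Nd} tends to $1$, whereas $\P(X>0^+)=\tfrac{1}{2}$ by symmetry of $X$; the $d=3$ case \eqref{eq:tailX:3d} at $x=0$ gives $\tfrac{2}{\pi}\cdot\tfrac{\pi}{2}=1$ as well. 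The paper's own proof passes from $\P(\mathbf{F}>F_0(V,x)\wedge 1,\,V>0)$ to the displayed integral without the $\tfrac{1}{2}$, so this is not a deviation on your part, but the phrase ``integrating the conditional probability against the density of $V$ over this range yields \eqref{eq:tailX:Nd}'' does not hold literally and deserves a second look.
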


\begin{proof}
 Let $V=\sin\Theta$, $F_k=\cos \Phi_k$ for $k=1,\ldots d-2$ and $\mathbf{F}=F_1\cdots F_{d-2}$. 
We obtain from the representation given 
in $(\ref{eq:X_Nd:def})$,
\begin{align*}
  \P(X>x)&= \P(X>x,V>0)\\ 
&=\P\left(xV^2\bF^2+2V\sqrt{1-V^2}\bF-x>0,V>0\right)\\
&=  \P\left(\bF>\frac{\sqrt{1-V^2+x^2}-\sqrt{1-V^2}}{xV},V>0\right)\\
&= \P\left(\bF>\frac{\sqrt{1-V^2+x^2}-\sqrt{1-V^2}}{xV}\wedge 1,V>0\right).
\end{align*}
The inequality $\frac{\sqrt{1-v^2+x^2}-\sqrt{1-v^2}}{xv}\leq 1$ holds for $v\in (0,1)$ and fixed $x>0$
if and only if $v\geq \frac{x}{\sqrt{4+x^2}}$. 
Recall that $V$ has the uniform distribution $U(-1,1)$.
Hence, we get
$$\P(X>x)= \int_{\frac{x}{\sqrt{4+x^2}}}^1\P\left(\bF>\frac{\sqrt{1-v^2+x^2}-\sqrt{1-v^2}}{xv}\right)dv,$$
completing the proof of \eqref{eq:tailX:Nd}.
In the 3-dimensional case we have ${\bf F}=F_1=\cos\Phi_1$
\begin{align*}
\P(X>x)&=
 \int_{\frac{x}{\sqrt{4+x^2}}}^1 {\textstyle
\P\left(\Phi_1 \in \left[-\cos^{-1}\left(\frac{\sqrt{1-v^2+x^2}-\sqrt{1-v^2}}{xv}\right),\cos^{-1}\left(\frac{\sqrt{1-v^2+x^2}-\sqrt{1-v^2}}{xv}\right)\right]\right)}dv\\
&=  \int_{\frac{x}{\sqrt{4+x^2}}}^1{\textstyle \frac{2}{\pi}\cos^{-1}\left(\frac{\sqrt{1-v^2+x^2}-\sqrt{1-v^2}}{xv}\right)}dv.
\end{align*}
\end{proof}

The following is the main result of this section.

\begin{thm}\label{the:theorem}
If the dimension of the space is $d\geq 3$ then we have 
\begin{align*}
\lim_{x\to\infty}x^d\P(X>x)=
\begin{cases}
\displaystyle
\frac{2}{(d-1)!!}\left(\frac{2}{\pi}\right)^{ (d-2)/2}
&
\text{  if  $d$ is even},\\
\displaystyle
\frac{2}{(d-1)!!}\left(\frac{2}{\pi}\right)^{(d-3)/2}
&
\text{  if  $d$ is odd}.
\end{cases}
\end{align*}
\end{thm}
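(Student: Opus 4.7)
The starting point is the formula of Proposition \ref{prop:step3d}:
$$\P(X>x)=\int_{x/\sqrt{4+x^2}}^{1}\P\!\left(\bF>g(v,x)\right)\,dv,\qquad \bF:=F_1\cdots F_{d-2},$$
with $g(v,x)=\bigl(\sqrt{1-v^2+x^2}-\sqrt{1-v^2}\bigr)/(xv)$. The first useful step is to rationalize the numerator to get the much friendlier form
$$g(v,x)=\frac{x}{v\bigl(\sqrt{1-v^2+x^2}+\sqrt{1-v^2}\bigr)}.$$
From this and a direct expansion one sees that both endpoints of the $v$-integration send $g$ to $1$, so the integrand vanishes there, and the mass of the integral is concentrated in a window of width $\asymp x^{-2}$ near $v=1$.

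This motivates the substitution $v=1-2t/x^2$, which rescales the integration interval to $t\in[0,T_x]$ where $T_x=\tfrac12 x^2(1-x/\sqrt{4+x^2})\to 1$ as $x\to\infty$. A Taylor expansion in $1/x^2$ (using $1-v^2=4t/x^2-4t^2/x^4$ and $\sqrt{x^2+1-v^2}=x+O(1/x^3)$) then yields
$$x^{2}\bigl(1-g(v,x)\bigr)=2\sqrt{t}\bigl(1-\sqrt{t}\bigr)+O(x^{-2}),$$
uniformly on any bounded $t$-interval, and in particular uniformly on $[0,T_x]$ once $x$ is large. In particular $g(v,x)\to 1$ uniformly in $v$, so Proposition \ref{thm:prodarcsine} applies to give $\P(\bF>g(v,x))/\bigl(c_{d-2}(1-g(v,x))^{(d-2)/2}\bigr)\to 1$ uniformly in $v$. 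Combining the two uniform estimates and using $dv=-2x^{-2}dt$,
$$x^{d}\P(X>x)=2x^{d-2}\int_{0}^{T_x}\P(\bF>g(v,x))\,dt\ \longrightarrow\ 2^{d/2+1}c_{d-2}\int_{0}^{1} t^{(d-2)/4}\bigl(1-\sqrt{t}\bigr)^{(d-2)/2}\,dt\cdot\tfrac12,$$
where the integrand is bounded by an integrable function (it vanishes at both endpoints for $d\ge 3$), so dominated convergence is justified.

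The substitution $u=\sqrt{t}$ turns the remaining integral into a Beta integral:
$$\int_{0}^{1} t^{(d-2)/4}(1-\sqrt{t})^{(d-2)/2}dt=2\int_{0}^{1}u^{d/2}(1-u)^{(d-2)/2}du=2\,B\!\left(\tfrac{d}{2}+1,\tfrac{d}{2}\right)=\frac{2\,\Gamma(d/2+1)\Gamma(d/2)}{\Gamma(d+1)}.$$
Hence
$$\lim_{x\to\infty}x^{d}\P(X>x)=2^{d/2+1}\,c_{d-2}\,\frac{\Gamma(d/2+1)\Gamma(d/2)}{\Gamma(d+1)}.$$

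What remains is the bookkeeping that matches this to the stated constants. Plug in the explicit values of $c_{d-2}$ from Proposition \ref{thm:prodarcsine} and treat the cases separately. For even $d=2m$ use $\Gamma(m+1)=m!$, $\Gamma(m)=(m-1)!$, and the identity $(2m)!=2^{m}m!\,(2m-1)!!$ to collapse the expression to $2^{m}/\bigl((2m-1)!!\,\pi^{m-1}\bigr)=\frac{2}{(d-1)!!}(2/\pi)^{(d-2)/2}$. For odd $d=2m+1$ use the half-integer Gamma values $\Gamma(m+\tfrac12)=(2m-1)!!\sqrt{\pi}/2^{m}$ and $\Gamma(m+\tfrac32)=(2m+1)!!\sqrt{\pi}/2^{m+1}$, together with $(2m+1)!=(2m+1)!!\,2^{m}m!$, to reduce to $1/(m!\pi^{m-1})=\frac{2}{(d-1)!!}(2/\pi)^{(d-3)/2}$. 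The main obstacle in the whole argument is not any single step but the need to carry the uniform estimate for $1-g(v,x)$ up to the moving endpoint $T_x\to 1$, since both factors $t^{(d-2)/4}$ near $t=0$ and $(1-\sqrt t)^{(d-2)/2}$ near $t=1$ are precisely what make the integrand integrable; one must verify the uniform asymptotic survives long enough (for $v$ all the way down to $v_*(x)$) to justify swapping limit and integral, after which the rest is routine Beta/Gamma algebra.
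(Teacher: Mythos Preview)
Your proof is correct and follows essentially the same route as the paper: localize near $v=1$ via a substitution of scale $x^{-2}$, invoke Proposition~\ref{thm:prodarcsine} uniformly (using that $g(v,x)\to 1$ uniformly on the integration range), reduce to a Beta integral, and finish with Gamma/double-factorial bookkeeping. The only cosmetic difference is the substitution itself: the paper uses $1/v^{2}=1+w/x^{2}$, which has the minor advantage that the new limits of integration are exactly $[0,4]$ for every $x$, so the dominated-convergence step does not involve the moving endpoint $T_x\to 1$ that you (correctly) flag as the one place requiring care.
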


\begin{proof}
The following formula is a reformulation of 
Proposition \ref{thm:prodarcsine},
\begin{equation}\label{j14.3}
\lim_{x\to 1^-}(1-x)^{-d/2}\P\left(\cos\Phi_{d}\ldots\cos \Phi_1>x\right)=c_d:=
\begin{cases}
\displaystyle
\frac{1}{d!!}\left(\frac{4}{\pi}\right)^{d/2} & \text{  if $d$ is even}, \\
\displaystyle
\frac{1}{\sqrt{2}\, d!!}\left(\frac{4}{\pi}\right)^{ (d+1)/2} & \text{  if $d$ is odd} .
\end{cases}
\end{equation}

We will show that
\begin{align}\label{j11.1}
\lim_{x\to\infty}\sup_{v\in\left[\frac{x}{\sqrt{x^2+4}},1\right]}\left|1-\frac{\sqrt{1-v^2+x^2}-\sqrt{1-v^2}}{xv}\right|=0.
\end{align}
If
\begin{align*}
A(v,x)&=\frac{2x\sqrt{1+v}}{x+\sqrt{1-v^2}+\sqrt{1-v^2+x^2}},\\
B(v,x)& = \frac{x}{v(\sqrt{1-v^2+x^2}+\sqrt{1-v^2})},\nonumber
\end{align*}
then
\begin{align}\label{eq:dif:A:B}
 1-\frac{\sqrt{1-v^2+x^2}-\sqrt{1-v^2}}{xv}
=\frac{\sqrt{1-v}}{x} A(v,x)
+ (v-1) B(v,x).
\end{align}
It is not hard to show that 
\begin{equation*}
 \lim_{x\to\infty}\sup_{v\in\left[\frac{x}{\sqrt{x^2+4}},1\right]}|A(v,x)-\sqrt{2}|=0\quad \textrm{and}\quad \lim_{x\to\infty}\sup_{v\in\left[\frac{x}{\sqrt{x^2+4}},1\right]}|B(v,x)-1|=0.\label{eq:AB:limits}
\end{equation*}
Formula \eqref{j11.1} follows from this and \eqref{eq:dif:A:B}.

It follows from \eqref{eq:tailX:Nd}, \eqref{j14.3} and \eqref{j11.1} that, as $x\to \infty$,
\begin{align}\label{j11.2}
\P(X>x)\sim c_{d-2}\int_{\frac{x}{\sqrt{4+x^2}}}^1{\textstyle \left(1-\frac{\sqrt{1-v^2+x^2}-\sqrt{1-v^2}}{xv}\right)^{\frac{d-2}{2}}}dv.
\end{align}

Substitutions $\frac{1}{v^2}=1+\frac{w}{x^2}$ and $\varepsilon = 1/x^2$ yield
\begin{align}\label{j11.3}
&\int_{\frac{x}{\sqrt{4+x^2}}}^1{\textstyle \left(1-\frac{\sqrt{1-v^2+x^2}-\sqrt{1-v^2}}{xv}\right)^{k}}dv\\
&=
\frac{\varepsilon}{2} \int_0^4\left(1-\sqrt{1+\varepsilon(1+\varepsilon)w}+\varepsilon\sqrt{ w}\right)^{k}(1+\varepsilon w)^{-3/2}dw.\nonumber
\end{align}

Let  $f_w(\varepsilon):=-\sqrt{1+\varepsilon(1+\varepsilon)w}+\varepsilon\sqrt{ w}$ 
and note that $f_w(0)=1$ and $f_w'(0)=\sqrt{w}-\frac{w}{2}$.
We use the Dominated Convergence Theorem to obtain 
$$
\lim_{\varepsilon\to 0} \int_0^4\left(\frac{f_w(\varepsilon)
-f_w(0)}{\varepsilon}\right)^{k}(1+\varepsilon w)^{-3/2}dw= \int_0^4f'_w(0)dw.
$$
This implies
\begin{align}\label{j11.4}
\lim_{\varepsilon\to 0} \frac 12 \varepsilon^{-k} \int_0^4\left(1-\sqrt{1+\varepsilon(1+\varepsilon)w}+\varepsilon\sqrt{ w}\right)^{k}(1+\varepsilon w)^{-3/2}dw= 
\frac 12 \int_0^4\left(\sqrt{w}-\frac{w}{2}\right)^kdw.
\end{align}
The substitution $y=\sqrt{w}/2$ leads to
\begin{align*}
\frac 12 \int_0^4\left(\sqrt{w}-\frac{w}{2}\right)^k\, dw &= \int_0^1 (2y-2y^2)^k 4 ydy
=2^{k+2}\int_0^1y^{k+1}(1-y)^{k}dy .
\end{align*}
This and the identity $\frac{\Gamma(q)\Gamma(p)}{\Gamma(p+q)}=\int^1_0z^{q-1}(1-z)^{p-1}dz$ imply that
for $k> -1 $ we have 
$$\frac 12 \int_0^4\left(\sqrt{w}-\frac{w}{2}\right)^kdw = 2^{k+2}\frac{\Gamma(k+2)\Gamma(k+1)}{\Gamma(2k+3)}.$$
We use this formula, \eqref{j11.3} and \eqref{j11.4} to see that
$$\lim_{x\to\infty}x^d\int_{\frac{x}{\sqrt{4+x^2}}}^1{\textstyle \left(1-\frac{\sqrt{1-v^2+x^2}-\sqrt{1-v^2}}{xv}\right)^{\frac{d-2}{2}}}dv=\frac{2^{d/2}\Gamma(d/2)^2d}{\Gamma(d+1)}.$$
This and \eqref{j11.2} imply that $\P(X>x)\sim \frac{2^{d/2}\Gamma(d/2)^2d}{\Gamma(d+1)}c_{d-2} x^{-d}$, as $x\to \infty$.
The theorem follows from an application of the
formulas $\Gamma(d/2)=2^{-\frac{d-1}{2}}(d-2)!!\sqrt{\pi}$ for odd $d$  and $\Gamma(d/2)=2^{-\frac{d-2}{2}}(d-2)!!$ for even $d$.
\end{proof}

 Theorem \ref{the:theorem} says that $x\mapsto P(X>x)$ is a regularly
varying function with the index $-d$. This implies the following results.

\begin{corol}\label{cor:finiteMoments}
For all $k=0,\ldots , d-1$,  the $k$-th moment of  $X$ exists and is finite. However, the $d$-th moment of  $|X|$ is infinite.
\end{corol}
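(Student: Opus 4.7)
The plan is to deduce everything from Theorem \ref{the:theorem} and the symmetry of $X$.

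First, by Lemma \ref{prop:markovChainRandomWalk}(a) the random walk $S_k^{x_1}$ is symmetric, so $X \stackrel{d}{=} -X$, and in particular $\P(|X|>x) = 2\P(X>x)$ for all $x>0$. Combined with Theorem \ref{the:theorem}, this gives a constant $C>0$ such that
\begin{equation*}
\P(|X|>x) \sim C x^{-d}, \qquad x\to\infty.
\end{equation*}
In particular, there exist constants $0 < c_1 \leq c_2 < \infty$ and $x_0 > 0$ such that $c_1 x^{-d} \leq \P(|X|>x) \leq c_2 x^{-d}$ for all $x \geq x_0$.

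For $k \in \{0, 1, \ldots, d-1\}$, I would use the tail formula
\begin{equation*}
\E |X|^k = \int_0^\infty k x^{k-1} \P(|X|>x)\, dx.
\end{equation*}
Split the integral at $x_0$. On $[0,x_0]$ the integrand is bounded, so the integral is finite. On $[x_0,\infty)$ the upper bound yields
\begin{equation*}
\int_{x_0}^\infty k x^{k-1} \P(|X|>x)\, dx \leq k c_2 \int_{x_0}^\infty x^{k-1-d}\, dx < \infty,
\end{equation*}
since $k-1-d < -1$. Hence $\E|X|^k < \infty$.

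For the $d$-th moment, the lower bound gives
\begin{equation*}
\E |X|^d \geq \int_{x_0}^\infty d x^{d-1} \P(|X|>x)\, dx \geq d c_1 \int_{x_0}^\infty x^{-1}\, dx = \infty,
\end{equation*}
so $\E|X|^d = \infty$. There is no real obstacle here; the entire content is the tail asymptotic from Theorem \ref{the:theorem}, and the moment bounds are a direct application of the standard tail-integration identity.
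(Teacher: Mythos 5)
Your proof is correct and matches the paper's (implicit) argument: the paper simply observes that Theorem \ref{the:theorem} gives a tail of the form $\P(X>x)\sim c\,x^{-d}$ and asserts the corollary as an immediate consequence, which is precisely the standard tail-integration argument you have written out, together with symmetry of $X$ to control $\P(X<-x)$. No gap; you have just supplied the routine details the paper leaves to the reader.
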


\begin{corol}
If $Z_1=S^{x_1}_{N_0}$ then $x\mapsto \P(Z_1>x)$ is a regularly varying function of degree $-(d-1)$.
Hence, for all $k=0,1,\ldots, d-2$, the $k$-th moment of $Z_1$ is finite but the $(d-1)$-st moment is not.
\end{corol}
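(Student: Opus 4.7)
I would identify $Z_1 = S^{x_1}_{N_0}$ as the first strict ascending ladder height of the random walk $(S^{x_1}_n)_{n\ge 0}$. By Lemma \ref{prop:markovChainRandomWalk}(a) this walk has symmetric i.i.d.\ increments $X_k$, so in particular $\E X = 0$; by Corollary \ref{cor:finiteMoments} we have $\E X^2 < \infty$ (since $d \ge 3$); and by Theorem \ref{the:theorem} the positive tail $x\mapsto \P(X>x)$ is regularly varying of index $-d$ (this is stated explicitly in the sentence preceding Corollary \ref{cor:finiteMoments}).

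The plan is then to invoke the classical sharp tail asymptotic for ascending ladder heights under regularly varying step tails: for a mean-zero random walk with $\E X^2 < \infty$ and $\P(X>x)$ regularly varying of index $-\alpha$, $\alpha>2$, the first strict ascending ladder height $H_+$ satisfies
\begin{equation*}
\P(H_+ > x) \sim \frac{1}{\E(-H_-)}\int_x^\infty \P(X>y)\,dy, \qquad x\to\infty,
\end{equation*}
where $H_-$ is the first strict descending ladder height; finiteness of $\E(-H_-) \in (0,\infty)$ is guaranteed by Chow's moment theorem for ladder heights, or equivalently by Spitzer's identity $\E H_+ \cdot \E(-H_-) = \sigma^2/2$ together with symmetry. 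Applying this with $H_+ = Z_1$ and using Karamata's theorem, $\int_x^\infty \P(X>y)\,dy \sim c\,x^{-(d-1)}$ for some $c>0$, so $\P(Z_1>x)$ is regularly varying of index $-(d-1)$.

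For the moment claim I would simply write $\E[Z_1^k] = k\int_0^\infty x^{k-1}\P(Z_1>x)\,dx$ for $k\ge 1$; since the integrand decays like $x^{k-d}$ at infinity, the integral converges for $k\le d-2$ and diverges for $k=d-1$, while the $0$-th moment equals $1$ trivially.

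The main obstacle is justifying the ladder-height tail asymptotic above. The cleanest route is simply to cite it from the random-walk literature (Rogozin, Veraverbeke, Doney). If a self-contained derivation is desired, one would combine Wiener--Hopf factorisation with a ``single big jump'' estimate for the negative excursion straddling $N_0$---standard but technical, and essentially orthogonal to the main themes of the paper, so appealing to the published literature seems preferable.
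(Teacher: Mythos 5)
Your proposal is correct and takes essentially the same route as the paper, whose entire proof is the one-line citation ``See \cite{doney}, which builds on results of \cite{veraveb}.'' You simply spell out the ladder-height tail asymptotic and the Karamata step that those references provide, which is a helpful elaboration but not a different argument.
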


\begin{proof}
See \cite{doney}, which builds on results of  \cite{veraveb}. 
\end{proof}

\begin{corol}
The random walk $(S_k^{x_1})$ is neighborhood recurrent. The light ray will eventually exit the tube almost surely.
\end{corol}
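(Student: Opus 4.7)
The plan is to reduce both assertions to a single application of the Chung–Fuchs recurrence criterion. The key inputs are already available in the paper: Lemma \ref{prop:markovChainRandomWalk}(a) tells us that $(S_k^{x_1})$ is a symmetric random walk, and Corollary \ref{cor:finiteMoments} tells us that $\E|X| < \infty$ whenever $d \geq 3$ (since $d-1 \geq 2 > 1$). Symmetry combined with integrability then forces $\E X = 0$.

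Given $\E X = 0$, the weak law of large numbers gives $S_n^{x_1}/n \to 0$ in probability. The Chung–Fuchs theorem (see e.g. Durrett, \emph{Probability: Theory and Examples}) asserts that any one-dimensional random walk with $S_n/n \to 0$ in probability is recurrent, and for a non-lattice walk this recurrence is neighborhood recurrence, i.e.\ $(S_n^{x_1})$ enters every non-empty open set infinitely often almost surely. The non-lattice property is immediate here because $X$ has a continuous distribution: it is a smooth function of the jointly continuous variables $\Theta, \Phi_1, \dots, \Phi_{d-2}$ via \eqref{eq:X_Nd:def}.

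From neighborhood recurrence, in particular $(S_n^{x_1})$ visits the open set $(s,\infty)$ at some finite (random) time for every $s > 0$; equivalently, $\P(N_s < \infty) = 1$. Since $N_s$ is exactly the index of the reflection after which the ray first crosses the exit hyperplane $\{x_1 = s\}$, this is the statement that the light ray exits the tube almost surely.

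I do not foresee any real obstacle. The only points requiring minor care are the justification that $\E X = 0$ rather than undefined (which uses Corollary \ref{cor:finiteMoments} to guarantee $\E|X|<\infty$, not merely symmetry), and the remark that the walk is non-lattice (needed to upgrade Chung–Fuchs recurrence to neighborhood recurrence in the stated sense).
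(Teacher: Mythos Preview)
Your proposal is correct and follows essentially the same approach as the paper: invoke Corollary \ref{cor:finiteMoments} to get $\E|X|<\infty$, use symmetry to conclude $\E X=0$, apply the Chung--Fuchs theorem to obtain neighborhood recurrence, and deduce that $N_s<\infty$ almost surely. The paper simply cites \cite[Lemma~4.2]{laser} for the Chung--Fuchs step where you spell out the WLLN and non-lattice details, but the arguments are otherwise identical.
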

\begin{proof}
By Corollary \ref{cor:finiteMoments}, $X$ has a finite expectation. Since $X$ is symmetric, $\E X=0$. Neighborhood recurrence of $(S_k^{x_1})$ can now be proved as in \cite[Lemma 4.2.]{laser}, using the Chung-Fuchs Theorem. Neighborhood recurrence of $(S_k^{x_1})$ implies that the process will
eventually take a value larger than $s$, a.s. In other words,  the light ray will exit the tube.
\end{proof}

Recall the definition of $U_s$ and $O_s$ from $(\ref{eq:NsMk})$. In dimension $d\geq 3$ we have $\E[X_1^2]<\infty$  and $\E[Z_1]<\infty$.
Proposition 5.5 and Lemma 5.6 in \cite{laser} were concerned with the case $d=3$ but their proofs used only the finiteness of $\E[Z_1]$ and not any other consequences of the assumption that $d=3$. Hence, the results and their proofs apply in the case of our current model, i.e., they hold true for all $d\geq 3$. We state the two results without proofs.

\begin{propo}\label{prop:limit3dU/O}
 For $t\in [0,1]$,
\begin{align}\label{u19.5}
\lim_{s\to\infty}\P\left(\frac{U_s}{U_s+O_s}\leq t\right)
= \Lambda(t) :=
\frac{\E[(t(U_0+O_0)-U_0)^+]}{\E[O_0]}.
\end{align}
\end{propo}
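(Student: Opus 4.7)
The natural approach is renewal-theoretic, via the strict ascending ladder process of $(S_k^{x_1})$. Let $T_0 = 0 < T_1 < T_2 < \ldots$ denote the ladder epochs and put $Z_k = S_{T_k}^{x_1}$. By the corollary above, $\E Z_1 < \infty$; since the law of $X$ is continuous (being built from the continuous $\Theta$ and $\Phi_j$), the joint distribution of $(Z_1, X_{T_1})$ is continuous, so in particular $Z_1$ is nonarithmetic. A key observation is that $N_s$ is itself a ladder epoch: for $n < N_s$ we have $S_n^{x_1} \leq s < S_{N_s}^{x_1}$, so $S_{N_s}^{x_1}$ strictly exceeds the previous running maximum, and hence $N_s = T_{K(s)}$ with $K(s) := \min\{k : Z_k > s\}$. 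Consequently $O_s = Z_{K(s)} - s \in (0, Z_{K(s)} - Z_{K(s)-1}]$ and $U_s + O_s = X_{T_{K(s)}}$, while the pairs $(Z_k - Z_{k-1}, X_{T_k})$ are i.i.d.\ by the strong Markov property at ladder times.

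With $g(a,b) := \1\{a/(a+b) \leq t\}$ and $V(s) := \E[g(U_s, O_s)]$, conditioning on the first ladder epoch gives the renewal equation
\[
V(s) = z(s) + \int_0^s V(s-u)\, \P(Z_1 \in du), \qquad z(s) := \E\bigl[g(X_{T_1} - (Z_1 - s),\, Z_1 - s)\, \1\{Z_1 > s\}\bigr].
\]
The key renewal theorem then yields
\[
\lim_{s\to\infty} V(s) = \frac{1}{\E Z_1}\int_0^\infty z(u)\, du = \frac{1}{\E Z_1}\,\E\!\left[\int_0^{Z_1} g(X_{T_1} - w,\, w)\, dw\right],
\]
after the substitution $w = Z_1 - u$. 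Since $N_0 = T_1$, one identifies $Z_1 = O_0$ and $X_{T_1} = U_0 + O_0$. The integrand becomes $\1\{w \geq (1-t)(U_0 + O_0)\}$, whose integral over $w \in (0, O_0]$ equals $(O_0 - (1-t)(U_0 + O_0))^+ = (t(U_0 + O_0) - U_0)^+$, producing the claimed expression for $\Lambda(t)$.

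The main technical point is direct Riemann integrability of $z$, required to apply the key renewal theorem. This follows from the bound $z(u) \leq \P(Z_1 > u)$, the finiteness $\int_0^\infty \P(Z_1 > u)\, du = \E Z_1 < \infty$, and the continuity of $z$ inherited from the continuous joint distribution of $(Z_1, X_{T_1})$. As the excerpt notes, this is exactly the argument used in Proposition 5.5 of \cite{laser}, which employed only $\E Z_1 < \infty$ and hence carries over verbatim to all $d \geq 3$.
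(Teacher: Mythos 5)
Your argument is correct. The paper does not give its own proof of this proposition but instead points to Proposition~5.5 of \cite{laser}, noting that the argument there uses only $\E Z_1<\infty$; your renewal-theoretic derivation (identify $N_s$ as a strict ascending ladder epoch, write the renewal equation for $V(s)$ over the first ladder height, verify direct Riemann integrability of $z$ from $z\le\P(Z_1>\cdot)$ and $\E Z_1<\infty$, apply the key renewal theorem, and identify $Z_1=O_0$, $X_{T_1}=U_0+O_0$) is a faithful reconstruction of exactly that argument, which you yourself acknowledge at the end.
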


\begin{lemma}\label{lema:r:2}
The function $\Lambda:[0,1]\to \R$ defined in \eqref{u19.5}
has the following properties.
\begin{enumerate}[(a)]
 \item $\Lambda$ is a continuous, increasing and convex function.
 \item $ \frac{\E(X\1_{(X>0)})}{\E[O_0]}\ t\leq \Lambda(t)\leq t$.
 \item $\Lambda(0)=0$, $\Lambda(1)=1$, $\Lambda'(0)=\E(X\1_{(X>0)}) / \E[O_0]$ and $\Lambda'(1)=\E[O_0+U_0]/\E O_0$.
\end{enumerate}
\end{lemma}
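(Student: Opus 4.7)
The plan is to set $Y := U_0 + O_0$ and work with the identity $\Lambda(t)\,\E[O_0] = \E[(tY - U_0)^+]$. Note that $0 \leq U_0 \leq Y$, that $Y - U_0 = O_0 \geq 0$, and that the finiteness of $\E[Y] = \E[U_0] + \E[O_0]$ is implicit in the statement (and follows from Corollary \ref{cor:finiteMoments} together with standard fluctuation-theoretic facts); this finiteness is what licenses the dominated convergence arguments below.

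For part (a), for each $\omega$ the map $t \mapsto (tY(\omega) - U_0(\omega))^+$ is continuous, nondecreasing, and convex on $[0,1]$, being the positive part of an affine function of $t$. The bound $(tY - U_0)^+ \leq Y$ for $t\in[0,1]$ together with $\E[Y] < \infty$ allows dominated convergence to transfer continuity to $\Lambda$, while monotonicity and convexity are preserved under expectation directly.

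For part (c), $\Lambda(0)=0$ is immediate since $(-U_0)^+ = 0$, and $\Lambda(1) = \E[(Y-U_0)^+]/\E[O_0] = \E[O_0]/\E[O_0] = 1$. Differentiating under the expectation (via dominated convergence applied to the difference quotients, which are bounded in absolute value by $Y$) gives
\[
\Lambda'(t) \;=\; \E\!\left[Y\, \1_{\{tY > U_0\}}\right]\big/\E[O_0]
\]
wherever defined. As $t\downarrow 0$, the event $\{tY>U_0\}$ shrinks to $\{U_0=0\}$, which (since $X$ has a continuous distribution) agrees almost surely with $\{X_1>0\}$, on which $Y=X_1$; hence $\Lambda'(0)=\E[X_1\1_{\{X_1>0\}}]/\E[O_0]$. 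As $t\uparrow 1$, the event increases to $\{Y>U_0\}=\{O_0>0\}$, which has full probability, giving $\Lambda'(1)=\E[Y]/\E[O_0] = (\E[U_0]+\E[O_0])/\E[O_0]$.

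Part (b) then follows easily: convexity from (a) together with $\Lambda(0)=0$ and $\Lambda(1)=1$ yields $\Lambda(t)=\Lambda((1-t)\cdot 0+t\cdot 1) \leq t$, while the pointwise lower bound $(tY-U_0)^+ \geq tY\,\1_{\{U_0=0\}}$, combined with the identity $\E[Y\1_{\{U_0=0\}}] = \E[X_1\1_{\{X_1>0\}}]$, gives $\Lambda(t)\geq t\,\E[X\1_{\{X>0\}}]/\E[O_0]$. No step is truly hard; the only points requiring care are the finiteness $\E[Y]<\infty$ used to justify dominated convergence and the a.s.\ identification $\{U_0=0\}=\{X_1>0\}$, both of which follow from inputs already available in the excerpt.
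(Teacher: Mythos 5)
The paper states this lemma without proof, deferring to the earlier reference where the $d=3$ case is established and noting only that the argument depends on $\E[Z_1]=\E[O_0]<\infty$; so there is no internal proof here to compare against. Your self-contained argument, working pointwise with $t\mapsto (tY-U_0)^+$ for $Y=U_0+O_0$ and passing expectations through dominated convergence, is correct, and the key identifications are exactly right: a.s.\ $\{U_0=0\}=\{N_0=1\}=\{X_1>0\}$, on which $Y=X_{N_0}=X_1$, which drives both the lower bound in (b) and the value of $\Lambda'(0)$; and $\{Y>U_0\}=\{O_0>0\}$ has full probability, which gives $\Lambda'(1)$. Two small points worth tightening. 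First, the expression $\E[Y\1_{\{tY>U_0\}}]/\E[O_0]$ is the left derivative; at $t=0$ one wants the right derivative, whose integrand is $Y\1_{\{tY\geq U_0\}}$ and which at $t=0$ yields $\E[Y\1_{\{U_0=0\}}]/\E[O_0]$ directly. Your shrinking-event computation of $\lim_{t\downarrow 0}\E[Y\1_{\{tY>U_0\}}]$ recovers the same value via right-continuity of the right derivative of a convex function, which is fine but deserves a word. Second, you rightly flag that $\E[U_0]<\infty$ (hence $\E[Y]<\infty$) is needed for the dominated convergence; note, however, that the sharper bound $(tY-U_0)^+=(tO_0-(1-t)U_0)^+\leq tO_0$ dominates using only $\E[O_0]<\infty$ and in fact gives $\Lambda(t)\leq t$ immediately without invoking convexity, so $\E[U_0]<\infty$ is genuinely required only for the endpoint value $\Lambda'(1)=\E[O_0+U_0]/\E[O_0]$ (unavoidably, since that value involves $\E[U_0]$); citing the standard fact that $\E X_1=0$ and $\E X_1^2<\infty$ imply $\E[U_0]<\infty$ would close this.
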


For $y_0 \in \R^{d-1}$, let $D_t(\sig_0)= \{\sig\in \R^{d-1}: \|\sig - (1-t)\sig_0\|\leq t \}$.
Note that  $D_t(\sig_0)$ is an $(d-1)$-dimensional ball with measure 
$\frac{\pi^{(d-1)/2}}{\Gamma(\frac{d+1}{2})} t^{d-1}$.
The definition of $D_t(\sig_0)$, \eqref{u19.5} and 
\begin{equation}
 \left\{\frac{U_s}{U_s+O_s}\leq t\right\}
=\left\{ Y_s \in D_t(S^{x_{2,\ldots, d}}_{N_s-1})\right\} \label{eq:underOverShootExit}
\end{equation}
\begin{figure}
\begin{center}
 \includegraphics[width=8cm]{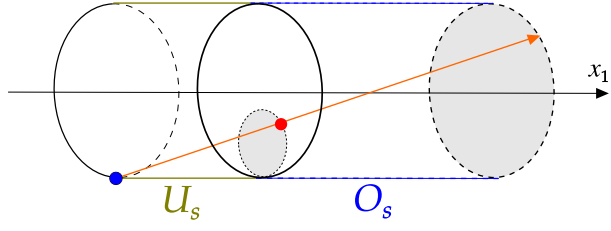}
\caption{Illustration of the equality $(\ref{eq:underOverShootExit})$.}\label{3dOvrUnd}
\end{center}
\end{figure}
yield
\begin{align}\label{u19.4}
\lim_{s\to \infty} \P ( Y_s \in D_t(S^{x_{2,\ldots, d}}_{N_s-1}) )
= \Lambda(t).
\end{align}
 See Figure \ref{3dOvrUnd} for the illustration of the equality (\ref{eq:underOverShootExit}).

\begin{propo} \label{thm:limit:3d:from:direction}
Let $\lambda(A)$ denote the Lebesgue measure on $\partial _r C$.
\begin{enumerate}[(a)]
\item
\begin{align*}
\lim_{t\to 0^+}\frac{\lim_{s\to \infty} \P ( Y_s \in D_t(S^{x_{2,\ldots, d}}_{N_s-1}))}{\lambda(D_t(S^{x_{2,\ldots, d}}_{N_s-1}))}=\infty.
\end{align*}
\item
\begin{align*}
\lim_{t\to 1^-}\frac{\lim_{s\to \infty} \P ( Y_s \in \partial _r C \setminus  D_t(S^{x_{2,\ldots, d}}_{N_s-1}))}{\lambda(\partial _r C \setminus D_t(S^{x_{2,\ldots, d}}_{N_s-1}))}=\frac{\Gamma(\frac{d+1}{2})}{(d-1)\pi^{\frac{d-1}{2}}}\E[O_0+U_0]/\E[O_0].
\end{align*}
\end{enumerate}
\end{propo}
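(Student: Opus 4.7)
The plan is to reduce both parts to the behavior of $\Lambda$ at the endpoints of $[0,1]$, which is already pinned down in Lemma \ref{lema:r:2}(c), combined with the elementary formula for the volume of a Euclidean ball. Once the numerators are replaced by $\Lambda(t)$ or $1-\Lambda(t)$ via \eqref{u19.4} and the denominators are written out, both claims become one-line asymptotic computations.

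First I would apply \eqref{u19.4} to identify the numerators. For part (a), $\lim_{s\to\infty}\P(Y_s\in D_t(S^{x_{2,\ldots,d}}_{N_s-1}))=\Lambda(t)$. For part (b), observe that $D_t(\sig_0)\subset \partial_r C$ for every $\sig_0\in\Sph$ and every $t\in[0,1]$, since $\|(1-t)\sig_0+z\|\leq(1-t)+t=1$ whenever $\|z\|\leq t$; hence the probability of the complementary exit event converges to $1-\Lambda(t)$. The denominators are standard: $\lambda(D_t(\sig_0))=V_{d-1}\,t^{d-1}$ with $V_{d-1}:=\pi^{(d-1)/2}/\Gamma((d+1)/2)$, and since $\partial_r C$ is the unit disc in $\R^{d-1}$ we have $\lambda(\partial_r C\setminus D_t(\sig_0))=V_{d-1}(1-t^{d-1})$. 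Note in particular that both denominators depend on $t$ only, so the outer $\lim_{t\to 0^+}$ (resp.\ $\lim_{t\to 1^-}$) is unambiguous.

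For part (a), Lemma \ref{lema:r:2}(c) gives $\Lambda(0)=0$ and $\Lambda'(0^+)=\E(X\1_{(X>0)})/\E[O_0]\in(0,\infty)$, so $\Lambda(t)\sim \Lambda'(0^+)\,t$ as $t\to 0^+$, and therefore
$$\frac{\Lambda(t)}{V_{d-1}\,t^{d-1}}\sim \frac{\Lambda'(0^+)}{V_{d-1}}\,t^{-(d-2)}\to\infty,$$
using $d\geq 3$. For part (b), Lemma \ref{lema:r:2}(c) gives $\Lambda(1)=1$ and $\Lambda'(1^-)=\E[O_0+U_0]/\E[O_0]$, so $1-\Lambda(t)\sim \Lambda'(1^-)(1-t)$ as $t\to 1^-$; combined with the elementary $1-t^{d-1}\sim (d-1)(1-t)$, the ratio converges to $\Lambda'(1^-)/[V_{d-1}(d-1)]$, which simplifies to the claimed $\Gamma((d+1)/2)/[(d-1)\pi^{(d-1)/2}]\cdot\E[O_0+U_0]/\E[O_0]$.

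There is no substantial obstacle; the heavy lifting sits in Proposition \ref{prop:limit3dU/O} and Lemma \ref{lema:r:2}. The only small checks are (i) that the one-sided derivatives of $\Lambda$ at $0$ and $1$ are finite and strictly positive, which follows from $\E[X^2]<\infty$ (guaranteed by Corollary \ref{cor:finiteMoments} since $d\geq 3$), together with the symmetry of $X$ and $\P(X>0)>0$; and (ii) the triangle-inequality containment $D_t(\sig_0)\subset\partial_r C$ used for part (b), noted above.
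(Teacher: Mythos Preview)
Your proof is correct and follows essentially the same route as the paper: identify the numerator with $\Lambda(t)$ (resp.\ $1-\Lambda(t)$) via \eqref{u19.4}, compute the denominator as a constant times $t^{d-1}$ (resp.\ $1-t^{d-1}$), and reduce to the endpoint behavior of $\Lambda$ given by Lemma \ref{lema:r:2}. The only cosmetic difference is that for part (a) the paper invokes the linear lower bound $\Lambda(t)\geq ct$ from Lemma \ref{lema:r:2}(b) rather than the derivative $\Lambda'(0^+)$ from part (c); either suffices.
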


\begin{proof}
(a)
We have 
$\lambda(D_t(S^{x_{2\ldots d}}_{N_s-1}))=\frac{\Gamma(\frac{d+1}{2})}{\pi^{\frac{d-1}{2}}} t^{d-1}$ so part (a) follows from Lemma \ref{lema:r:2} (b) and \eqref{u19.4}.

\medskip
(b) By \eqref{u19.4},
\begin{align*}
\lim_{t\to 1^-}&\frac{\lim_{s\to \infty} \P ( Y_s \in
\partial _r C \setminus D_t(S^{x_{2,\ldots, d}}_{N_s-1}))}{\lambda(\partial _r C \setminus D_t(S^{x_{2,\ldots, d}}_{N_s-1}))}
=\lim_{t\to 1^-}\frac{1-\Lambda(t)}{\lambda_{d-1}(\D)(1-t^{d-1})}\\
&=\lim_{t\to 1^-}\frac{\Gamma(\frac{d+1}{2})}{\pi^{\frac{d-1}{2}}}\cdot\frac{1}{(1+t+\ldots+t^{d-2})}\frac{\Lambda(1)-\Lambda(t)}{1-t}=\frac{\Gamma(\frac{d+1}{2})}{(d-1)\pi^{\frac{d-1}{2}}}\Lambda'(1).
\end{align*}
Part (b) now follows from Lemma \ref{lema:r:2} (c).
\end{proof}

\begin{propo}\label{j12.7}
For $r\in (0,1)$,
\begin{equation}
\label{j12.8}
\lim_{s\to \infty} \P ( Y_s \in B_r(0))\leq \Lambda\left(\frac{1+r}{2}\right)-\Lambda\left(\frac{1-r}{2}\right), 
\end{equation}
where  $\Lambda$ is given by $(\ref{u19.5})$.
\end{propo}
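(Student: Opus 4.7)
The plan is to bound the event $\{Y_s \in B_r(0)\}$ by an event depending only on the ratio $U_s/(U_s+O_s)$, so that Proposition \ref{prop:limit3dU/O} applies directly.

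By Lemma \ref{j15.1}, writing $\sig_0 := S^{x_{2,\ldots, d}}_{N_s-1}$ and $\sig_1 := S^{x_{2,\ldots, d}}_{N_s}$, both of which lie on the unit sphere $\Sph$, we have $Y_s = (1-\alpha_s)\sig_0 + \alpha_s \sig_1$, where
\[
\alpha_s = \frac{s - S^{x_1}_{N_s-1}}{S^{x_1}_{N_s} - S^{x_1}_{N_s-1}} = \frac{U_s}{U_s+O_s} \in [0,1].
\]
Since $\|\sig_0\| = \|\sig_1\| = 1$ and $\alpha_s, 1-\alpha_s \geq 0$, the reverse triangle inequality gives
\[
\|Y_s\| \geq \bigl|(1-\alpha_s)\|\sig_0\| - \alpha_s\|\sig_1\|\bigr| = |1 - 2\alpha_s|.
\]
Hence $Y_s \in B_r(0)$ forces $|1-2\alpha_s|\leq r$, i.e., $\alpha_s \in [(1-r)/2,\, (1+r)/2]$.

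This yields the inclusion
\[
\{Y_s \in B_r(0)\} \;\subseteq\; \left\{\frac{U_s}{U_s+O_s} \leq \frac{1+r}{2}\right\} \setminus \left\{\frac{U_s}{U_s+O_s} < \frac{1-r}{2}\right\}.
\]
Taking probabilities, sending $s\to\infty$, and applying Proposition \ref{prop:limit3dU/O} to both terms (using the continuity of $\Lambda$ guaranteed by Lemma \ref{lema:r:2}(a) to replace the strict inequality by the non-strict one at $(1-r)/2$) gives
\[
\lim_{s\to\infty}\P(Y_s \in B_r(0)) \leq \Lambda\left(\frac{1+r}{2}\right) - \Lambda\left(\frac{1-r}{2}\right),
\]
which is exactly \eqref{j12.8}.

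There is no substantial obstacle in this argument; the geometric content is simply the observation that a convex combination of two unit vectors can lie close to the origin only when the mixing coefficient is close to $1/2$, with the extremal antipodal case $\sig_1 = -\sig_0$ being the sole way to achieve equality in the norm bound. The only minor subtlety is the open endpoint at $(1-r)/2$, which is handled by the continuity of $\Lambda$. The resulting estimate is expected to be a strict inequality for most $r$, since achieving $\|Y_s\|\leq r$ also requires that $\sig_0$ and $\sig_1$ be nearly antipodal, an event the bound does not exploit.
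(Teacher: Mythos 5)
Your proof is correct and is essentially the paper's argument: the paper deduces the same bound from the purely geometric inclusion $B_r(0)\subset D_{(1+r)/2}(\sig)\setminus D_{(1-r)/2}(\sig)$ combined with \eqref{u19.4}, and that inclusion, once you unwind the definition of $D_t$ and the formula for $Y_s$ from Lemma \ref{j15.1}, is exactly the reverse-triangle-inequality constraint $|1-2\alpha_s|\leq r$ that you derive. You have simply made the geometric step explicit and applied Proposition \ref{prop:limit3dU/O} directly rather than its restatement \eqref{u19.4}.
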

\begin{proof}
Since 
$B_r(0) \subset D_{(1+r)/2}(\sig)\setminus D_{(1-r)/2}(\sig)$ for any $\sig\in \Sph$, we obtain
\eqref{j12.8} by applying \eqref{u19.4}.
\end{proof}

\begin{propo}\label{thm:lim:conditioned}
For $t\in (0,1)$ and $\varepsilon>0$
\begin{equation}
\label{eq:12.8}
\lim_{s\to \infty} \P ( Y_s \in D_t(S_{N_s-1}^{x_{2,\ldots, d}})
\mid S_{N_s-1}^{x_1}\leq s(1-\varepsilon))= t^{d}.
\end{equation}
\end{propo}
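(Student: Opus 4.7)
The plan is to recognize this proposition as an immediate application of the general overshoot/undershoot result, Theorem \ref{thm:overUnderShoot}, to the one-dimensional random walk $(S^{x_1}_k)$, combined with the geometric identity \eqref{eq:underOverShootExit}.

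First, I would rewrite the event of interest in terms of the ratio $U_s/(U_s+O_s)$. The identity \eqref{eq:underOverShootExit} tells us that
\begin{equation*}
\{Y_s \in D_t(S^{x_{2,\ldots,d}}_{N_s-1})\} = \bigl\{U_s/(U_s+O_s) \leq t\bigr\},
\end{equation*}
so the conditional probability on the left-hand side of \eqref{eq:12.8} equals $\P\bigl(U_s/(U_s+O_s) \leq t \mid S^{x_1}_{N_s-1} \leq s(1-\varepsilon)\bigr)$, where the overshoot/undershoot is relative to the random walk $(S^{x_1}_k)$.

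Next, I would verify the hypotheses of Theorem \ref{thm:overUnderShoot} for the step distribution $X = X_k$ of the random walk $S^{x_1}_k$. Theorem \ref{the:theorem} shows that $x \mapsto \P(X > x)$ is regularly varying with index $-d$, which provides the needed index $\alpha = d$. The variable $X$ is continuous since it is a smooth function of the continuous random variables $\Theta, \Phi_1,\ldots,\Phi_{d-2}$ via \eqref{eq:X_Nd:def}. Finally, by Lemma \ref{prop:markovChainRandomWalk}(a) the walk $(S^{x_1}_k)$ is symmetric, so $\P(X<0) = \P(X>0) > 0$; this is what allows us to invoke the second, more permissive clause of Theorem \ref{thm:overUnderShoot} that permits arbitrary $\varepsilon > 0$ (not only $\varepsilon \in (0,1)$).

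Applying Theorem \ref{thm:overUnderShoot} with $\alpha = d$ then gives directly
\begin{equation*}
\lim_{s\to\infty}\P\bigl(U_s/(U_s+O_s) \leq t \mid S^{x_1}_{N_s-1} \leq s(1-\varepsilon)\bigr) = t^{d},
\end{equation*}
which via \eqref{eq:underOverShootExit} is exactly \eqref{eq:12.8}. There is essentially no obstacle here; the work has been front-loaded into Theorem \ref{the:theorem} (establishing the tail index) and Theorem \ref{thm:overUnderShoot} (the abstract overshoot result), and the present proposition is simply the concrete application of the two. The only minor point worth a sentence of justification is the continuity of the distribution of $X$, which ensures we are in the setting where Theorem \ref{thm:overUnderShoot} is stated.
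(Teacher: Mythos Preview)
Your proposal is correct and follows exactly the paper's approach: the paper's proof is the single sentence ``The formula follows from \eqref{eq:underOverShootExit} and Theorem \ref{thm:overUnderShoot},'' and you have simply spelled out the verification of the hypotheses (regular variation with index $-d$ via Theorem \ref{the:theorem}, continuity of $X$, and $\P(X<0)>0$ to cover all $\varepsilon>0$).
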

\begin{proof}
 The formula follows from $(\ref{eq:underOverShootExit})$ and Theorem \ref{thm:overUnderShoot}.
\end{proof}

\section{Light ray exit distribution}\label{sec:exit}
In this section we discuss the asymptotic properties of the distribution of $Y_s$, the light ray exit point, for large $s$. 

The following formula follows from Lemma \ref{j15.1}
and the construction given in Section  \ref{sec:construction}, 
\begin{align*}
Y_s&=  U^{S^{x_{2,\ldots, d}}_{N_s-1}}\left((0,\ldots,0,-1)\right)\\
&\qquad{\textstyle +\,U^{S^{x_{2,\ldots, d}}_{N_s-1}}\left(\frac{s-S^{x_1}_{N_s-1}}{S^{x_1}_{N_s}-S^{x_1}_{N_s-1}}\left(G_{2,\ldots ,d}(\Theta^{(N_s)}, \Phi_1^{(N_s)},\ldots ,\Phi_{d-2}^{(N_s)})-(0,\ldots,0,-1) \right)\right)}.
\end{align*}
Denote 
\begin{equation}
 Y_s^0=(0,\ldots,0,-1)+\frac{s-S^{x_1}_{N_s-1}}{S^{x_1}_{N_s}-S^{x_1}_{N_s-1}}\left(G_{2,\ldots ,d}(\Theta^{(N_s)}, \Phi_1^{(N_s)},\ldots ,\Phi_{d-2}^{(N_s)})-(0,\ldots,0,-1)\right).
\end{equation}
Note, that $Y_s=U^{S^{x_{2\ldots d}}_{N_s-1}}(Y_s^0)$ and recall the definition of $\tau_{\infty}$ from $(\ref{eq:tau:infty})$.

\begin{thm}\label{lem:tauInftyFromAway}
The distribution of $\P(Y_s^0\in \cdot \mid S_{N_s-1}^{x_1} \leq s - \beta(s) )$
converges weakly to $\tau_\infty$ for every deterministic function 
$\beta$ such that $\lim_{s\to\infty}\beta(s)=\infty$.
\end{thm}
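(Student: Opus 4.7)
The plan is to condition on the undershoot $U_s=s-S_{N_s-1}^{x_1}$ at the last reflection and thereby reduce to the weak convergence $\tau_u\to\tau_\infty$ already established in Proposition \ref{thm:reflection:plane:intersection}.

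Let $\psi(u,p)$ denote the right-hand side of \eqref{eq:planeIntersectionCoordinates} viewed as a function of $u>0$ and the reflection parameters $p=(\theta,\phi_1,\ldots,\phi_{d-2})$, and set $P_n=(\Theta^{(n)},\Phi_1^{(n)},\ldots,\Phi_{d-2}^{(n)})$. Inspecting the definition of $Y_s^0$ together with \eqref{eq:reflvector} and \eqref{j14.1}--\eqref{j14.2}, one checks that, in the local frame at the $(N_s-1)$-st reflection,
\[
Y_s^0=\psi(U_s,P_{N_s}),
\]
while $\widetilde Z_u=\psi(u,P_1)$ in Section \ref{sec:description}. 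The key geometric observation is that for every $u>0$ the events $\{X>u\}$ (the step exceeds $u$, so the ray crosses $\{x_1=0\}$ before the next cylinder hit) and $\{\widetilde Z_u\in\D\}$ coincide: if $X\leq u$ or $X<0$, the formal crossing given by \eqref{eq:planeIntersectionCoordinates} is either an extrapolation past the next cylinder reflection or a backward extrapolation, and in either case the perpendicular norm along the ray already equals $1$ at the endpoints of the segment inside $\ol C$ and exceeds $1$ outside, forcing $\widetilde Z_u\notin\D$.

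Next, exploit the i.i.d.\ property of $(P_n)$. Since $N_s$ is a stopping time for $\mathcal F_n=\sigma(P_1,\ldots,P_n)$ and on $\{N_s=n\}$ the value $U_s=u_n:=s-S_{n-1}^{x_1}$ is $\mathcal F_{n-1}$-measurable while $P_n$ is independent of $\mathcal F_{n-1}$, a decomposition on $N_s$ followed by integration over $P_n$ using the equivalence above yields, for every Borel $A\subset\D$ and $B\subset(0,\infty)$,
\[
\P(Y_s^0\in A,\,U_s\in B)=\int_B \tau_u(A)\,\P(U_s\in du),
\]
where $\tau_u(A)=\P(\widetilde Z_u\in A\mid \widetilde Z_u\in\D)$ as in Proposition \ref{thm:reflection:plane:intersection}. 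Consequently,
\[
\P(Y_s^0\in A\mid U_s\geq\beta(s))=\int_{[\beta(s),\infty)}\tau_u(A)\,\P(U_s\in du\mid U_s\geq\beta(s)).
\]

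Finally, fix any continuity set $A$ of $\tau_\infty$ and $\eps>0$. By Proposition \ref{thm:reflection:plane:intersection}, $\tau_u(A)\to\tau_\infty(A)$ as $u\to\infty$, so there exists $u_0$ with $|\tau_u(A)-\tau_\infty(A)|<\eps$ for all $u\geq u_0$. Since $\beta(s)\to\infty$, for all sufficiently large $s$ the integrand in the display above is within $\eps$ of $\tau_\infty(A)$ throughout the domain of integration, giving $\P(Y_s^0\in A\mid U_s\geq\beta(s))\to\tau_\infty(A)$. Weak convergence then follows from Portmanteau's theorem. The one place requiring genuine care is the geometric identification $\{X>u\}=\{\widetilde Z_u\in\D\}$; once this is in place, Proposition \ref{thm:reflection:plane:intersection} transfers directly from the last reflection to the exit distribution.
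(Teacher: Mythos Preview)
Your argument is correct and follows essentially the same route as the paper's proof: both decompose on $\{N_s=k\}$, use the independence of the $k$-th reflection angles from $\mathcal F_{k-1}$ to identify the conditional law of $Y_s^0$ given $U_s=u$ with that of $\widetilde Z_u$ given $\{\widetilde Z_u\in\D\}$, and then invoke Proposition~\ref{thm:reflection:plane:intersection} together with $\beta(s)\to\infty$. Your version is in fact slightly cleaner in that you package the sum over $k$ into the undershoot law $\P(U_s\in du)$ and make explicit the geometric identity $\{X>u\}=\{\widetilde Z_u\in\D\}$, which the paper uses implicitly when it replaces the event $\{S_k^{x_1}>s,\,Y_s^0\in A\}$ by $\{\widetilde Z^k_{s-S_{k-1}^{x_1}}\in A\}$.
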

\begin{proof}
It is clear that $\lambda_{d-1}(A)=0$ if and only if $\tau_{\infty}(A)=0$, for all 
$A\in \mathcal{B}(\D)$. Assume that $A\in \mathcal{B}(\D)$ is such that 
$\lambda_{d-1}(\partial A)=0$. We have 
\begin{align}\label{j19.1}
 \P&(Y_s^0\in A,S_{N_s-1} \leq s - \beta(s))=\sum_{k=1}^{\infty}\P(Y_s^0\in A, S_{k-1}^{x_1} \leq s - \beta(s),N_s=k)\\
&=\sum_{k=1}^{\infty}\P(Y_s^0\in A, S_k^{x_1}>s,S_{k-1}^{x_1} \leq s - \beta(s),S_{k-2}^{x_1}\leq s,\ldots, S_1^{x_1}\leq s).\nonumber
\end{align}
Recall from $(\ref{eq:planeIntersectionCoordinates})$ that the ray started at $(-u,0,\ldots,0,-1)$ 
whose direction is governed by angles $\Theta^k$, $\Phi_1^k,\ldots, \Phi_{d-2}^k$ intersects the plane 
at 
$$\widetilde{Z}^{k}_u=\left(\frac{u\sin \Phi_1^{(k)}}{\cos \Phi_1^{(k)}},  \ldots ,\frac{u\sin \Phi_{d-2}^{(k)}}{\cos \Phi_1^{(k)}\ldots \cos \Phi_{d-2}^{(k)}},  \frac{u\cot \Theta^{(k)}}{\cos \Phi_1^{(k)}\ldots \cos \Phi_{d-2}^{k}}-1\right). $$
It follows from \eqref{j19.1} that
\begin{align*}
\P&(Y_s^0\in A,S_{N_s-1} \leq s - \beta(s))\\
 &=\sum_{k=1}^{\infty}\P(\widetilde{Z}^k_{s-S_{k-1}^{x_1}}\in A,S_{k-1}^{x_1} \leq s - \beta(s),S_{k-2}^{x_1}\leq s,\ldots, S_1^{x_1}\leq s)\\
&=\sum_{k=1}^{\infty}\int_{\beta(s)}^{\infty}\P(\widetilde{Z}^k_{u}\in A)\P(s-S_{k-1}^{x_1}\in du ,S_{k-2}^{x_1}\leq s,\ldots, S_1^{x_1}\leq s)\\
&=\sum_{k=1}^{\infty}\int_{\beta(s)}^{\infty}\P(\widetilde{Z}_{u}^k\in A|\widetilde{Z}_{u}^k\in \D)\P(\widetilde{Z}_{u}^k\in \D)\P(s-S_{k-1}^{x_1}\in du ,S_{k-2}^{x_1}\leq s,\ldots, S_1^{x_1}\leq s)\\
&=\sum_{k=1}^{\infty}\int_{\beta(s)}^{\infty}\P(\widetilde{Z}_{u}\in A|\widetilde{Z}_{u}\in \D)\P(\widetilde{Z}_{u}^k\in \D)\P(s-S_{k-1}^{x_1}\in du ,S_{k-2}^{x_1}\leq s,\ldots, S_1^{x_1}\leq s).
\end{align*}
By Proposition \ref{thm:reflection:plane:intersection} we have
for $s\to \infty$,
\begin{align*}
\P&(Y_s^0\in A,S_{N_s-1} \leq s - \beta(s))\\
&\sim \sum_{k=1}^{\infty}\int_{\beta(s)}^{\infty}\tau_\infty(A)\P(\widetilde{Z}_{u}^k\in \D)\P(s-S_{k-1}^{x_1}\in du ,S_{k-2}^{x_1}\leq s,\ldots, S_1^{x_1}\leq s)\\
&= \tau_\infty(A)\P(S_{N_s-1}\leq s-\beta(s)).
\end{align*}
\end{proof}

\begin{corol}\label{cor:rotationExitingPosition}
For $A\in \mathcal{B}(\D)$,
\begin{equation}
\label{eq:12.8c}
\lim_{s\to \infty} \P ( Y_s \in U^{S_{N_s-1}^{x_{2\ldots d}}}(A) \mid S_{N_s-1}^{x_1}\leq s - \beta(s))= \tau_{\infty}(A).
\end{equation}
\end{corol}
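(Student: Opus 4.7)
The strategy is to reduce the statement directly to Theorem \ref{lem:tauInftyFromAway}. The key input is the identity
\[
Y_s = U^{S^{x_{2,\ldots,d}}_{N_s-1}}(Y_s^0)
\]
recorded just above Theorem \ref{lem:tauInftyFromAway}. For each realization of $S^{x_{2,\ldots,d}}_{N_s-1}$, the operator $U^{S^{x_{2,\ldots,d}}_{N_s-1}}$ is a unitary operator on $\R^{d-1}$, hence a bijection of $\D$ onto itself. Applying its inverse pointwise yields the $\omega$-wise set identity
\[
\{Y_s \in U^{S^{x_{2,\ldots,d}}_{N_s-1}}(A)\} = \{Y_s^0 \in A\}.
\]

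Combining this identity with the conditioning event $\{S^{x_1}_{N_s-1}\leq s-\beta(s)\}$ gives
\[
\P\bigl(Y_s \in U^{S^{x_{2,\ldots,d}}_{N_s-1}}(A)\,\big|\,S^{x_1}_{N_s-1}\leq s-\beta(s)\bigr)
= \P\bigl(Y_s^0 \in A\,\big|\,S^{x_1}_{N_s-1}\leq s-\beta(s)\bigr),
\]
and the right-hand side converges to $\tau_\infty(A)$ by Theorem \ref{lem:tauInftyFromAway}, at least for $A$ with $\lambda_{d-1}(\partial A)=0$ (which is the continuity-set assumption natural for weak convergence; note that $\tau_\infty$ is absolutely continuous with respect to $\lambda_{d-1}$ on $\D$ with density bounded above and away from zero, so the continuity-set conditions for $\tau_\infty$ and $\lambda_{d-1}$ coincide).

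There is no substantive obstacle here: the corollary is a formal consequence of the preceding theorem combined with the change-of-variables identity $Y_s = U^{S^{x_{2,\ldots,d}}_{N_s-1}}(Y_s^0)$. The only point that deserves care is emphasizing that taking preimages under the random unitary map commutes with the indicator of the event, which is why the random rotation on the target set cancels against the factor $U^{S^{x_{2,\ldots,d}}_{N_s-1}}$ that is built into $Y_s$.
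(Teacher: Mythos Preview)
Your proof is correct and follows essentially the same approach as the paper: both rely on the identity $Y_s = U^{S^{x_{2,\ldots,d}}_{N_s-1}}(Y_s^0)$ and the invertibility of the unitary operator to reduce directly to Theorem~\ref{lem:tauInftyFromAway}. Your version is more explicit about the $\omega$-wise set identity and the continuity-set caveat for weak convergence, but the argument is the same.
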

\begin{proof}
The result holds since $Y_s=U^{S_{N_s-1}^{x_{2\ldots d}}}(Y_s^0)$ and  $U^{S_{N_s-1}^{x_{2\ldots d}}}$ is an invertible linear operator.
\end{proof}

\begin{remark} Proposition \ref{thm:lim:conditioned} can be derived from Corollary \ref{cor:rotationExitingPosition}.
Since $D_t(S_{N_s-1}^{x_{2,\ldots ,d}})=U^{S_{N_s-1}^{x_{2,\ldots, d}}}D_t((0,0,\ldots,0,-1))$, formula (\ref{eq:12.8})  follows from Proposition \ref{prop:tauInftyProperties} (b).
\end{remark}

\begin{thm}\label{thm:uniformFromAway}
The distribution of $\P(Y_s\in \cdot \mid S_{N_s-1}^{x_1} \leq s - \beta(s) )$
converges weakly to the uniform distribution on $\D$ for every deterministic function 
$\beta$ such that $\lim_{s\to\infty}\beta(s)=\infty$.
\end{thm}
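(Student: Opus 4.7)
The plan is to use the decomposition $Y_s=U^{S^{x_{2,\ldots,d}}_{N_s-1}}(Y_s^0)$ together with Theorem~\ref{lem:tauInftyFromAway} (which controls the limit of $Y_s^0$) and the averaging identity \eqref{eq:sphere:integral:tau} (which says that mixing $\tau_\infty$ over a uniform rotation of $\Sph$ produces normalized Lebesgue measure on $\D$). The key auxiliary fact is Lemma~\ref{prop:markovChainRandomWalk}(c), which states that $S^{x_{2,\ldots,d}}_{N_s-1}$ is uniform on $\Sph$ and independent of $\mathcal{F}:=\sigma(S^{x_1}_k:k\geq 0)$.

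First I would fix a Borel continuity set $A\subset\D$ of the uniform distribution (so $\lambda_{d-1}(\partial A)=0$) and set $E_s:=\{S^{x_1}_{N_s-1}\leq s-\beta(s)\}$. Writing $\{Y_s\in A\}=\{Y_s^0\in U^{S^{x_{2,\ldots,d}}_{N_s-1}\,*}(A)\}$, the initial step is to show that, conditional on $\mathcal{F}$, the direction $S^{x_{2,\ldots,d}}_{N_s-1}$ is still uniform on $\Sph$ and is independent of $Y_s^0$. Uniformity and independence of $\mathcal{F}$ itself are precisely Lemma~\ref{prop:markovChainRandomWalk}(c). The additional independence of $Y_s^0$ would follow by isolating the sources of randomness: $Y_s^0$ is a function of $\mathcal{F}$-measurable quantities and of the angles $(\Theta^{(N_s)},\Phi^{(N_s)}_1,\ldots,\Phi^{(N_s)}_{d-2})$ at the last step, whereas $S^{x_{2,\ldots,d}}_{N_s-1}$ is built from $\Sigma$ and the angles at steps $k<N_s$; conditional on $\mathcal{F}$ these two collections remain independent (and $\Sigma$ still uniform), so the two random variables decouple. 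With $\mu$ the uniform probability on $\Sph$, a Fubini step should then yield
\[
\P(Y_s\in A\mid E_s)=\int_\Sph \P\bigl(Y_s^0\in U^{\sig*}(A)\mid E_s\bigr)\,d\mu(\sig).
\]
Here $\P(E_s)>0$ for large $s$ by Theorem~\ref{thm:overUnderShoot} (since $X$ is symmetric, $\P(X<0)>0$).

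Finally, for each fixed $\sig$ the set $U^{\sig*}(A)$ has $\lambda_{d-1}$-negligible boundary (as $U^{\sig*}$ is an isometry), so Theorem~\ref{lem:tauInftyFromAway} gives $\P(Y_s^0\in U^{\sig*}(A)\mid E_s)\to \tau_\infty(U^{\sig*}(A))$ as $s\to\infty$. Dominated convergence (the integrands are bounded by $1$) and formula \eqref{eq:sphere:integral:tau} then deliver
\[
\lim_{s\to\infty}\P(Y_s\in A\mid E_s)=\int_\Sph \tau_\infty(U^{\sig*}(A))\,d\mu(\sig)=\frac{\lambda_{d-1}(A)}{\lambda_{d-1}(\D)},
\]
which is exactly the uniform measure of $A$ on $\D$, giving the asserted weak convergence via the Portmanteau theorem. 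The main obstacle is the conditional independence claim: Lemma~\ref{prop:markovChainRandomWalk}(c) alone handles $S^{x_{2,\ldots,d}}_{N_s-1}$ in isolation, and one must keep careful track of which random inputs feed into $Y_s^0$ versus into $S^{x_{2,\ldots,d}}_{N_s-1}$ and verify that, after conditioning on $\mathcal{F}$, they depend on disjoint independent coordinates. Once this factorization is in place, the remainder of the argument is a clean Fubini-plus-dominated-convergence exercise on top of the previously proven averaging formula.
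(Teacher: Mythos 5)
Your proof is correct and reaches the same conclusion by a closely related but differently organized route. The paper does not derive Theorem \ref{thm:uniformFromAway} from Theorem \ref{lem:tauInftyFromAway}; instead its proof re-runs essentially the same expansion (sum over the value of $N_s$, rewrite in terms of $\widetilde Z^{k}_{u,\sig}$, decouple the direction $S^{x_{2,\ldots,d}}_{k-1}$ via Lemma \ref{prop:markovChainRandomWalk}(c), then apply Proposition \ref{prop:unifrom:ratio} and the averaging identity \eqref{eq:sphere:integral:tau}). You instead treat Theorem \ref{lem:tauInftyFromAway} as a black box and push the averaging over $\Sph$ through by a conditional-independence-plus-Fubini argument and dominated convergence. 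Both routes rest on the same three pillars --- Lemma \ref{prop:markovChainRandomWalk}(c), convergence of the conditional exit law to $\tau_\infty$, and formula \eqref{eq:sphere:integral:tau} --- and your substitution of DCT (for each fixed $\sig$, using that $U^{\sig*}$ is an isometry so $U^{\sig*}(A)$ is again a $\tau_\infty$-continuity set) for the paper's uniform-in-$\sig$ convergence (Proposition \ref{prop:unifrom:ratio}) is legitimate. The conditional-independence step you flag as the main obstacle is indeed the delicate point, and your heuristic resolution is correct: $\mathcal{F}=\sigma(X_k:k\geq 1)$ and each $X_k$ is a function of the angles at step $k$ alone, so conditioning on $\mathcal{F}$ preserves the independence of the angle tuples across steps and leaves $\Sigma$ uniform and independent; $Y^0_s$ is then a function of $\mathcal{F}$-measurable data and the angles at step $N_s$, while $S^{x_{2,\ldots,d}}_{N_s-1}$ is a function of $\Sigma$ and the angles at steps $k<N_s$, so they decouple given $\mathcal{F}$. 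The paper's term-by-term expansion over $k=N_s$ achieves exactly this decoupling explicitly at each $k$ and thereby sidesteps stating conditional independence as a separate claim; what your version buys in exchange is brevity and reuse of Theorem \ref{lem:tauInftyFromAway}.
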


The theorem is illustrated in Figures \ref{j26.2} and \ref{j26.1}. 

\begin{figure}[ht] \label{j26.1}
\begin{center}
 \includegraphics[width=6cm]{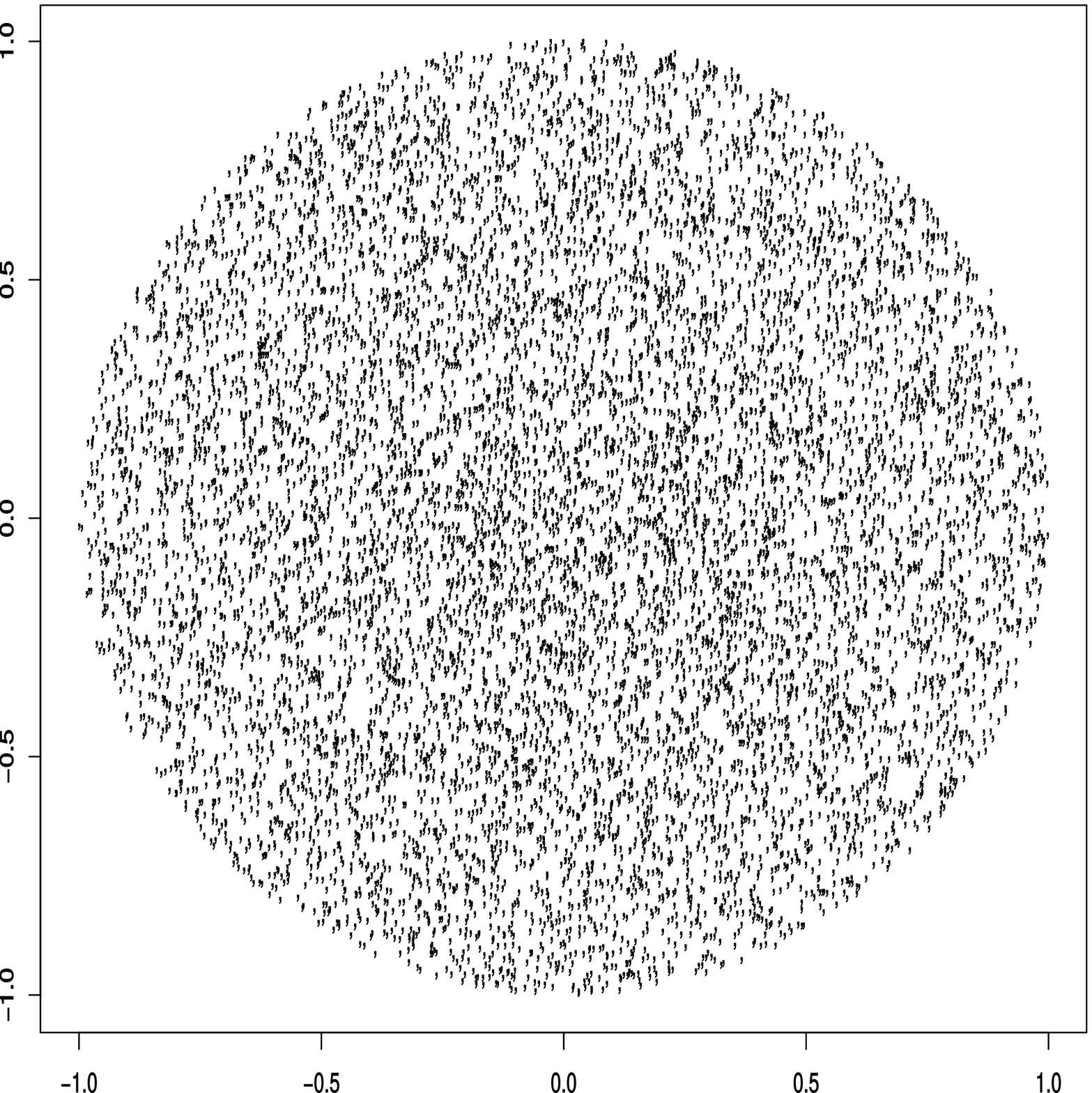}\quad \includegraphics[width=6cm]{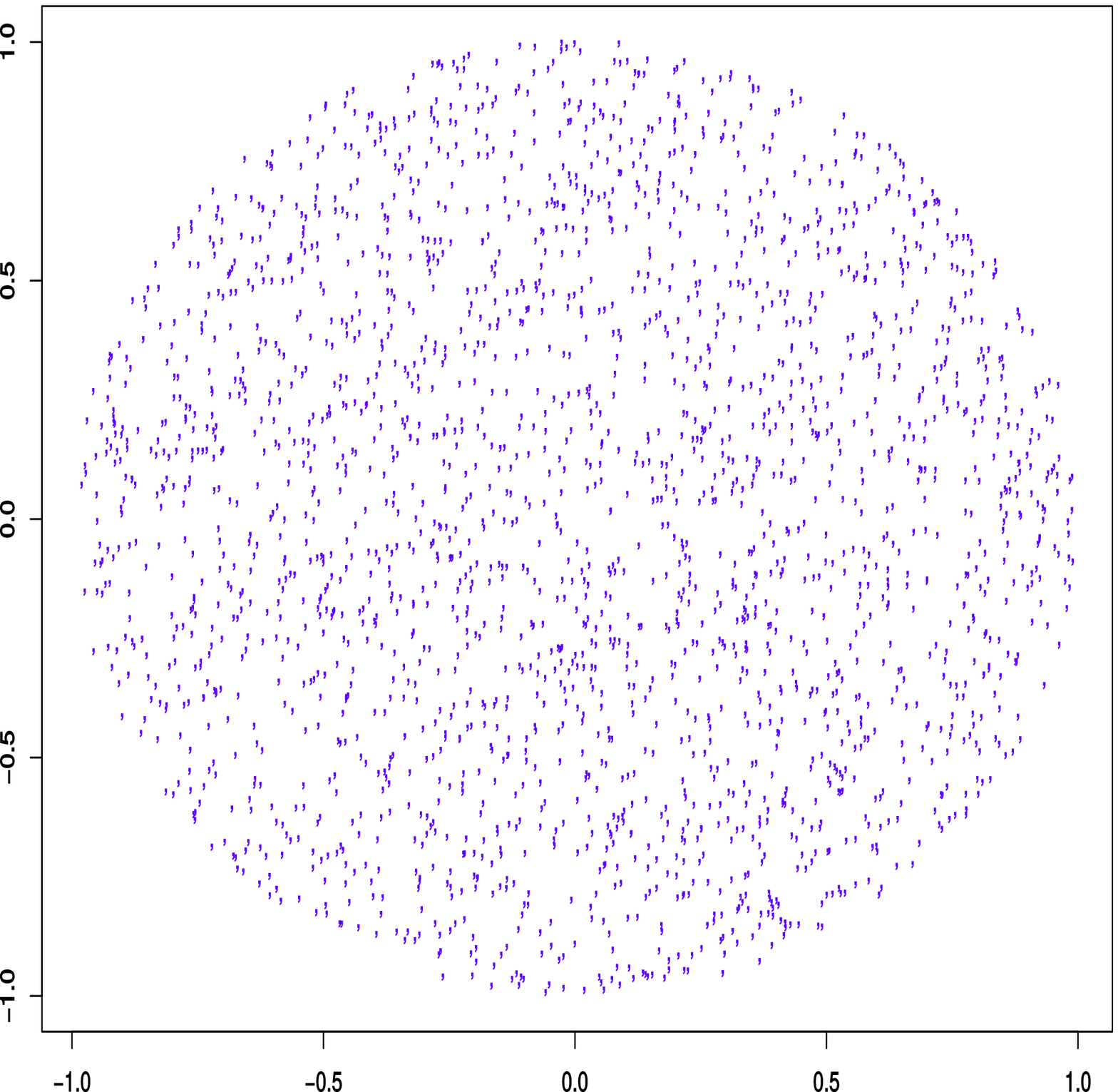}\\
 \includegraphics[width=6cm]{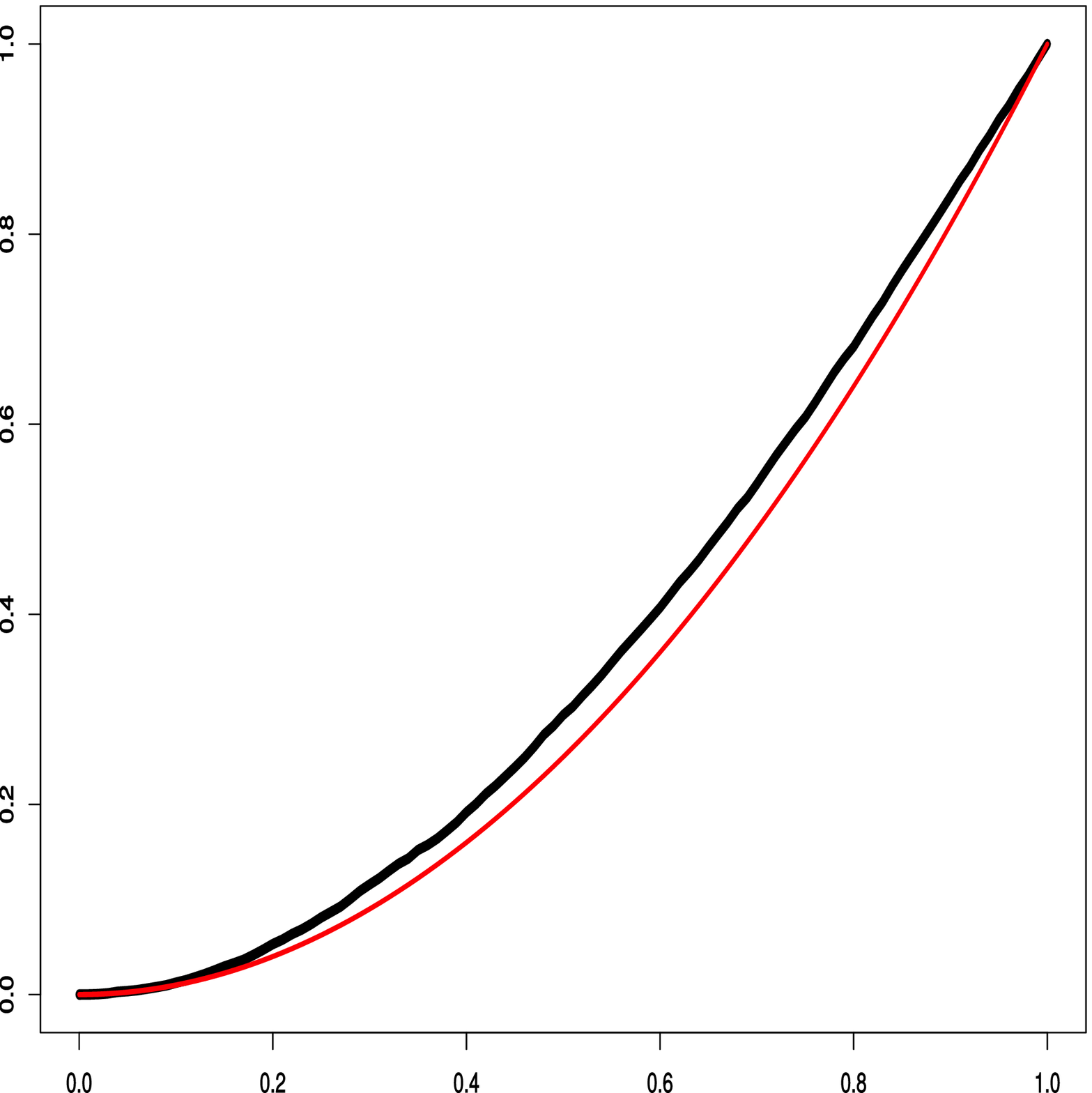}\quad \includegraphics[width=6cm]{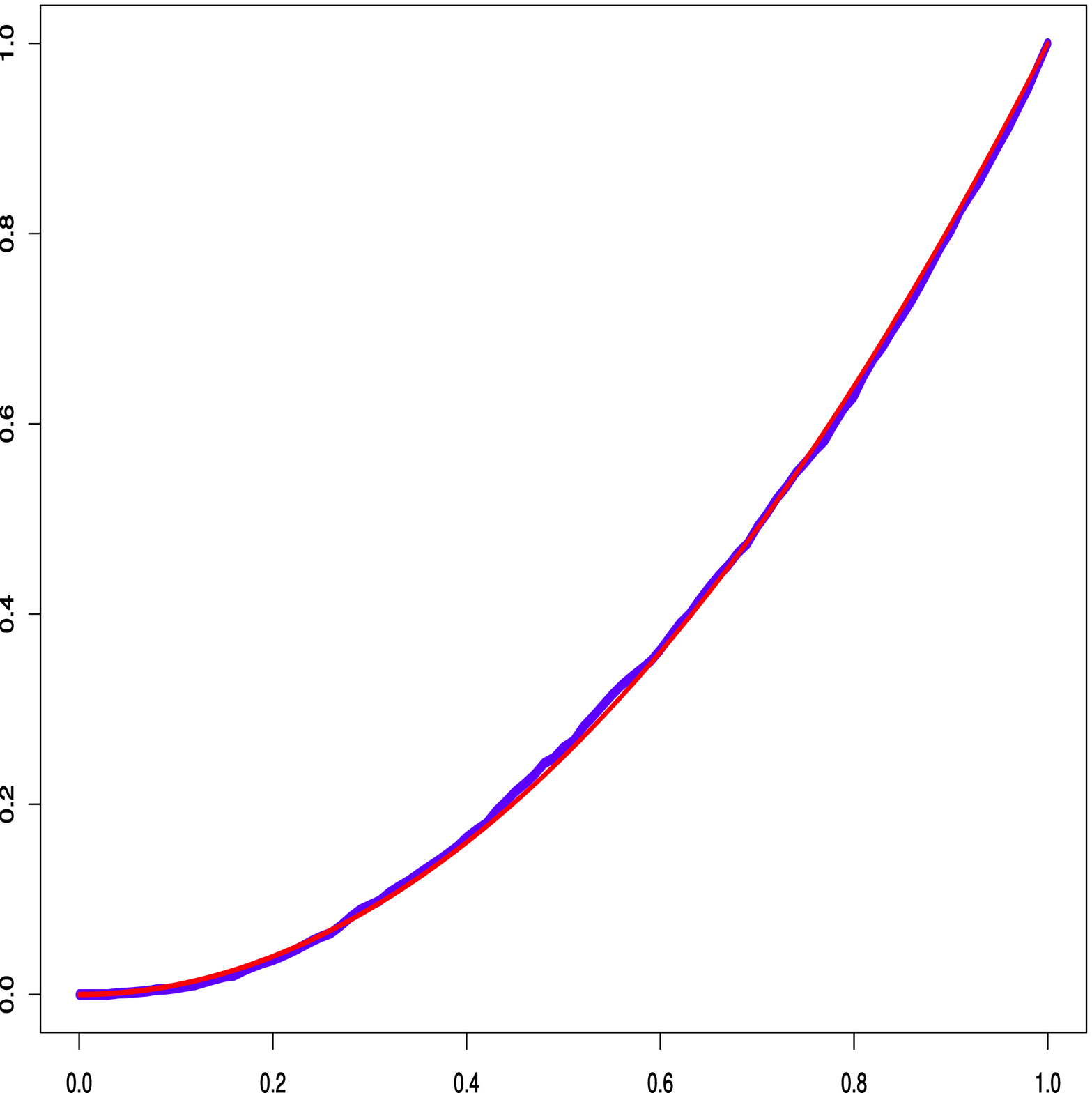}\\
\caption{Simulations illustrating  Theorem \ref{thm:uniformFromAway}. $Y_s$ was simulated 10,000 times for $d=3$ and $s=50$. The  images in the left column are showing the empirical distribution of $Y_s$ in the unit disk and the empirical cumulative distribution function of $|Y_s|$.
The images in the right column are analogous except that they represent $|Y_s|$ conditioned on $\{s-S_{N_s}\geq 3\}$. The red curves in the lower images are graphs of the function $y=x^2$, i.e., the theoretical asymptotic cumulative distribution function in the conditional case, when $s\to \infty$.}\label{simulation}
\end{center}
\end{figure}

\begin{proof}
Recall that $\lambda_{d-1}(A)=0$ if and only if $\tau_{\infty}(A)=0$, for all 
$A\in \mathcal{B}(\D)$. Assume that $A\in \mathcal{B}(\D)$ is such that 
$\lambda_{d-1}(\partial A)=0$. We have 
\begin{align*}
 \P&(Y_s\in A,S_{N_s-1} \leq s - \beta(s))\\
 &=\sum_{k=1}^{\infty}\P(Y_s\in A,S_{k-1} \leq s - \beta(s),N_s=k)\\
&=\sum_{k=1}^{\infty}\P(Y_s\in A,S_k>s, S_{k-1} \leq s - \beta(s), S_{k-2}\leq s,\ldots, S_1\leq s).
\end{align*}
Recall from $(\ref{def:ZUSigma})$ that a light ray started at $(-u,
\sig_1,\ldots,\sig_{d-2},\sig_{d-1})$, where $y_k$'s satisfy $\sig_1^2+\ldots+\sig_{d-2}^2+\sig_{d-1}^2=1$, and
whose direction is goverened by angles $\Theta^k$, $\Phi_1^k,\ldots, \Phi_{d-2}^k$ intersects the plane $\{x: x_1=0\}$ at $(0,\widetilde{Z}^{k}_{u,\sig})$. This and our  construction of the process yield
\begin{align*}
\P&(Y_s\in A,S_{N_s-1} \leq s - \beta(s))\\
&=\sum_{k=1}^{\infty}\P(Z_{s-S^{x_1}_{k-1},S_{k-1}^{x_{2\ldots d}}}^k\in A, S_{k-1}^{x_1} \leq s - \beta(s), S_{k-2}^{x_1}\leq s,\ldots, S_1^{x_1}\leq s)\\
&=\sum_{k=1}^{\infty}\int_{\Sph}\int_{\beta(s)}^{\infty}\P(Z_{u,\sig}^k\in A)\P(S_{k-1}^{x_{2\ldots d}}\in d\sig, s-S_{k-1}^{x_1}\in du , S_{k-2}\leq s,\ldots, S_1\leq s).
\end{align*}
Using Lemma \ref{prop:markovChainRandomWalk} (c) for $T=k-1$, we obtain
\begin{align*}
\P&(Y_s\in A,S_{N_s-1} \leq s - \beta(s))\\
&=\sum_{k=1}^{\infty}
\int_{\Sph}\int_{\beta(s)}^{\infty}
\P(Z_{u,\sig}^k\in A\mid Z_{u,\sig}^k\in \D)\P(Z_{u,\sig}^k\in \D)\P(S_{k-1}^{x_{2\ldots d}}\in d\sig) \\
& \qquad \qquad \times 
\P(s-S_{k-1}^{x_1}\in du, S_{k-2}\leq s,\ldots, S_1\leq s).
\end{align*}
Let $\Sigma$ be a random vector uniformly distributed  on $\Sph$.
By Proposition \ref{prop:unifrom:ratio} and \eqref{eq:sphere:integral:tau}, for $s\to \infty$,
\begin{align*}
\P&(Y_s\in A,S_{N_s-1} \leq s - \beta(s))\\
&\sim \sum_{k=1}^{\infty}\int_{\Sph}\int_{\beta(s)}^{\infty}\tau_{\infty}(U^{\sig *}(A))\P(Z_{u}^k\in \D)\P(\Sigma\in d\sig) \P(s-S_{k-1}^{x_1}\in du, S_{k-2}\leq s,\ldots, S_1\leq s)\\
&=\int_{\Sph}\tau_{\infty}(U^{\sig *}(A))\P(\Sigma\in d\sig)\sum_{k=1}^{\infty}\int_{\beta(s)}^{\infty}\P(Z_{u}^k\in \D)\P(s-S_{k-1}^{x_1}\in du, S_{k-2}\leq s,\ldots, S_1\leq s)\\
&=\lambda_{d-1}(A)/\lambda_{d-1}(\D)\P(S_{N_s-1} \leq s - \beta(s)).
\end{align*}
\end{proof}

\section{Brightness singularity}\label{sec:bright}
It was shown in \cite[Thm.~5.10]{laser} that there is an apparent brightness singularity at the center of the tube opening
when the dimension is $d=3$. We will now  generalize that result to all dimensions $d\geq 3$.

Let $\bv_s = (S_{N_s} - S_{N_s-1})/|S_{N_s} - S_{N_s-1}| $ be the unit
vector representing the direction of the light ray at the exit time.
Let $\cB(r) = \{(x_1,\ldots, x_d): x_1^2 + x_2^2 +\ldots+ x_d^2 = 1, x_2^2+\ldots +x_d^2 \leq r^2 , x>0 \}$ denote a ball on the unit sphere and recall that 
$B_r(0)=\{(x_2,\ldots, x_d)\in \D\ :\ x_2^2+\ldots+x_d^2\leq r^2\}$.

\begin{thm}\label{thm:u20_1}
For any $0 < r_1 < r_2 < 1$,
\begin{align*}
&\lim_{s\to \infty} \lim_{\delta\to 0}
\frac {s^{d-2}} {\delta^{d-1}} \P\left(\bv_s \in \cB\left( \frac{r_2}{s}\right) \setminus \cB\left( \frac{r_1}{s}\right) , Y_s \in B_\delta(0)\right) 
=\frac{\pi^{-\frac{d-3}{2}}}{4\Gamma(\frac{d+1}{2})\E X_1^2}\left(\frac{r_2^{d-2}-r_1^{d-2}}{d-2}\right).
\end{align*}
\end{thm}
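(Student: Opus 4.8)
The plan is to compute the joint probability by conditioning on the last reflection being far from the exit, then pass to the two nested limits. First I would argue that the event $\{\bv_s \in \cB(r_2/s)\setminus\cB(r_1/s)\}$ forces the last step $X_{N_s} = S^{x_1}_{N_s} - S^{x_1}_{N_s-1}$ to be of order $s$; more precisely, since $\bv_s$ makes a small angle with the tube axis, the ratio of the transverse speed to $X_{N_s}$ is essentially $\sqrt{x_2^2+\ldots+x_d^2}$ for the exit direction, so $\bv_s \in \cB(r/s)$ corresponds (after accounting for the overshoot/undershoot geometry and the fact that the ray travels a further distance proportional to $R_{N_s}$) to the transverse displacement of the final flight being $\le r/s$ times its longitudinal displacement. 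Because this transverse-to-longitudinal ratio is bounded by $r/s \to 0$, the undershoot $U_s = s - S^{x_1}_{N_s-1}$ must be large, of order $s$: hence for the purposes of the asymptotics we may condition on $\{S^{x_1}_{N_s-1} \le s(1-\eps)\}$ for small fixed $\eps$, up to errors that vanish. This is exactly the regime where Theorem \ref{thm:overUnderShoot} and Corollary \ref{cor:rotationExitingPosition} / Theorem \ref{lem:tauInftyFromAway} apply.

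Next, working on the event $\{S^{x_1}_{N_s-1} \le s(1-\eps)\}$, I would use the explicit representation of $Y_s^0$ and of $\bv_s$ in terms of $\Theta^{(N_s)}, \Phi_1^{(N_s)},\ldots,\Phi_{d-2}^{(N_s)}$ together with the undershoot $U_s$, and decompose the probability as a sum over $k=N_s$, integrating against the law of $(U_s, S^{x_{2,\ldots,d}}_{k-1})$ just as in the proof of Theorem \ref{lem:tauInftyFromAway}. The event $\{Y_s\in B_\delta(0)\}$ with $\delta\to 0$ pins the exit point near the axis; inspecting \eqref{eq:planeIntersectionCoordinates}, for the ray to exit within $B_\delta(0)$ the angles $\Phi_j$ must be $O(\delta/u)$ and $\cot\Theta$ must be $O(\delta/u)$ where $u = U_s$, and on this set all the $\cos\Phi_j \to 1$. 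Then the constraint $\bv_s \in \cB(r/s)\setminus$(smaller ball) becomes, to leading order, a constraint that $\sin^2\Theta \cdot(\text{something}\to 1)$ lies in an annulus of radii $r_1/s, r_2/s$ — i.e. $|\sin\Theta| \in [r_1/(cs), r_2/(cs)]$ for an explicit constant $c$ coming from the length $R_{N_s}$ of the final flight and the undershoot. Using $\sin\Theta = V \sim U(-1,1)$ and the density of $\cot\Theta$ near $0$ computed in the proof of Lemma \ref{lemma:cubeSubsetConditionalProbability}, the local probability factorizes into (i) a factor $\propto \delta^{d-1}/u^{?}$ from the $B_\delta(0)$ constraint on the $\Phi_j$'s, and (ii) a factor $\propto (r_2^{d-2}-r_1^{d-2})/s^{?}$ from the annulus constraint; taking $\delta\to 0$ first extracts the density $f_u(0,\ldots,0)$-type asymptotics (Proposition \ref{propo:densityAsymptotics}), and then summing over $k$ and integrating over $u \ge \beta$ replaces the sum by $\P(S^{x_1}_{N_s-1}\le s(1-\eps))$ weighted by the renewal-type density of $U_s$, whose relevant moment is controlled by $\E X_1^2$ via the renewal theorem (this is where $\E X_1^2 < \infty$, Corollary \ref{cor:finiteMoments}, enters, giving the $1/\E X_1^2$ in the constant).

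Assembling the constants: the $\delta\to 0$ limit produces a factor matching $\frac{1}{4u^d\pi^{d-2}}$ from Proposition \ref{propo:densityAsymptotics} integrated over the $\delta$-ball (which contributes $\lambda_{d-1}(B_\delta(0)) = \frac{\pi^{(d-1)/2}}{\Gamma((d+1)/2)}\delta^{d-1}$, explaining the $\delta^{d-1}$ normalization and the $\Gamma((d+1)/2)$ and power-of-$\pi$ in the answer), the annulus in $|\sin\Theta|$ over a window of width $\sim (r_2^{d-2}-r_1^{d-2})/(d-2)$ after the change of variables $v \mapsto v^{d-2}$ (this explains $\frac{r_2^{d-2}-r_1^{d-2}}{d-2}$), and the renewal/overshoot sum over $k$ yields $1/\E X_1^2$ after noting $\sum_k \P(\cdot) \int \ldots du$ telescopes against the elementary renewal theorem with mean step, using symmetry of the walk and $\E X_1 = 0$ so that the correct normalization is the second moment. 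I would double-check the exponent of $s$: the annulus constraint gives $s^{-(d-2)}$ and combined with the undershoot being $\Theta(s)$ (so $u \sim s$, and $1/u^d \sim 1/s^d$ cancels against $s^{d-1}$ from somewhere plus the $\int du$ over a range of length $\Theta(s)$) one lands on the advertised $s^{-(d-2)}$, i.e. $s^{d-2}\P(\cdots) \to $ const.

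The main obstacle I anticipate is the bookkeeping of the final-flight geometry: carefully expressing $\bv_s$ and the exit point $Y_s^0$ as explicit functions of $(\Theta^{(N_s)},\Phi^{(N_s)}_j, U_s)$, linearizing the annulus condition $\bv_s\in\cB(r_2/s)\setminus\cB(r_1/s)$ simultaneously with $Y_s\in B_\delta(0)$ and $U_s$ large, and showing the cross-terms and the dependence between ``$\bv_s$ in the annulus'' and ``exit in $B_\delta(0)$'' decouple in the limit — together with verifying that the contributions from $\{S^{x_1}_{N_s-1} > s(1-\eps)\}$ and from the non-leading-order terms in the expansions of $R_{N_s}$ and the $\cos\Phi_j$ are genuinely negligible. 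The renewal-theorem input for the $u$-integral is standard given $\E X_1^2<\infty$, but I would need to state it carefully (e.g. via the key renewal theorem applied to the pair undershoot/overshoot, as already used implicitly for Proposition \ref{prop:limit3dU/O}), and the interchange of the $s\to\infty$ and $\delta\to 0$ limits must be justified by a uniformity estimate of the type in Propositions \ref{propo:densityAsymptotics} and \ref{prop:unifrom:ratio}.
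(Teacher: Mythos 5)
Your high-level strategy matches the paper's: translate the condition $\bv_s\in\cB(r_2/s)\setminus\cB(r_1/s)$ into a condition on the longitudinal position $S^{x_1}_{N_s-1}$ of the last reflection, extract the $\delta^{d-1}$ factor from $\{Y_s\in B_\delta(0)\}$ via the density asymptotics of Proposition \ref{propo:densityAsymptotics}, and close with a renewal-measure estimate that produces $1/\E X_1^2$. The paper implements exactly this through Lemma \ref{lemma:aproxProbability}, which computes the limiting value of $\frac{s^{d-2}}{\delta^{d-1}}\P\bigl(S^{x_1}_{N_s-1}\in(s(1-\alpha_2)+\beta_2,\,s(1-\alpha_1)+\beta_1),\, Y_s\in B_\delta(0)\bigr)$ using Proposition \ref{propo:densityAsymptotics} together with the renewal Lemma \ref{u22.2}, and then through the geometric equivalence \eqref{j22.1}--\eqref{j22.2}, which shows that, given $Y_s\in B_\delta(0)$, the annulus condition on $\bv_s$ is exactly an interval condition on $S^{x_1}_{N_s-1}$, sandwiched for small $\delta$ between $s(1-1/r_j)\mp 2$.

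There are, however, two genuine problems with the proposal. First, the claim that the annulus constraint reduces to ``$|\sin\Theta|\in[r_1/(cs), r_2/(cs)]$'' is in the wrong regime: the axial coordinate of the unit exit direction is $\sin\Theta\cos\Phi_{d-2}\cdots\cos\Phi_1$, and for $\bv_s$ to lie within angle $O(1/s)$ of the axis this quantity must be close to $1$, so $\sin\Theta$ must be near $1$ and it is $\cos\Theta$ (equivalently $\cot\Theta$) that is of order $1/s$; your statement is also internally inconsistent with your own later use of ``the density of $\cot\Theta$ near $0$.'' Second, Theorem \ref{thm:overUnderShoot} is not the right tool here: it describes the limiting law of $U_s/(U_s+O_s)$, whereas what the argument requires is the vague convergence $\frac{1}{s^2}\M_s(s\,du)\to -\frac{u}{\E X_1^2}\,du$ of Lemma \ref{u22.2}, applied inside Lemma \ref{lemma:aproxProbability}. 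Most importantly, the ``main obstacle'' you correctly flag --- the final-flight bookkeeping --- is precisely the content of the elementary but pivotal equivalence \eqref{j22.1}--\eqref{j22.2}, and your proposal does not supply it; without that identity one cannot rigorously replace the annulus condition on $\bv_s$ by an explicit interval for $S^{x_1}_{N_s-1}$, nor verify that the $\pm 2$ slack and the restriction to $S^{x_1}_{N_s-1}\le s(1-\eps)$ become negligible in the double limit. So: right route, but a concrete $\sin/\cos$ slip in the angle analysis and a missing key step.
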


 For the proof we will need a modified version of Lemma 5.11 
from \cite{laser}.
Let $M_s(A) $ be the number of $k\leq N_s -1$ such that $S^{x_1}_k-s \in A$, that is
$$M_s(A)= \sum_{k=0}^{\infty}\1(S_k^{x_1}-s\in A, \max\{S_1^{x_1},\ldots, S_k^{x_k}\}\leq 0). $$
We define $\M_s(A)=\E(M_s(A))$. It is clear that $\M_s(A)$ is a non-negative measure.

\begin{lemma}\label{u22.2}
For any $0 < a_1 < a_2 < \infty$, 
\begin{equation}\label{eq:u22.2}
\lim _{s\to \infty} \frac 1 {s^2} \M_s(-s a_2, -s a_1)
= \frac{1}{2\E X_1^2}(a^2_2 - a^2_1).
\end{equation}
Moreover, 
for every continuous function $f$ and a compact subset $K$ of $(-\infty,0)$ we have
\begin{equation}
\label{eq:vagueConv}\lim_{s\to\infty}\frac{1}{s^2}\int_{K}f(u)\M_s(s\, du)=-\frac{1}{\E X_1^2}\int_{K}f(u)u\, du.
\end{equation}
\end{lemma}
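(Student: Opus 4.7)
The plan is to identify $\M_s(-sa_2,-sa_1)$ with the Green's function at the origin of the walk $(S^{x_1}_k)$ killed on first exceeding $s$, and then to extract its leading-order behaviour via a Brownian comparison. Put $B_s := (s(1-a_2), s(1-a_1))$ and
$$
H_s(x) = \E_x\sum_{k=0}^{N_s-1}\1_{B_s}(S^{x_1}_k).
$$
Because $\E X_1=0$ (the step in \eqref{eq:X_Nd:def} is symmetric) and $\sigma^2 := \E X_1^2 < \infty$ by Corollary \ref{cor:finiteMoments}, standard potential-theoretic bounds show that $H_s$ is bounded in $x$ and satisfies the discrete Dirichlet problem $(I-P)H_s = \1_{B_s}$ on $(-\infty,s]$ with $H_s \equiv 0$ on $(s,\infty)$, where $P$ is the one-step kernel of $(S^{x_1}_k)$. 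Applying optional stopping to the martingale $H_s(S^{x_1}_{k\wedge N_s}) - H_s(0) + \sum_{j<k\wedge N_s}\1_{B_s}(S^{x_1}_j)$, letting $k\to\infty$ by bounded convergence, and using $H_s(S^{x_1}_{N_s})=0$, gives the exact identity $\M_s(-sa_2,-sa_1) = H_s(0)$.

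Next I invoke Donsker's invariance principle: the rescaled walk $s^{-1} S^{x_1}_{\lfloor s^2 t/\sigma^2\rfloor}$ converges weakly to a standard Brownian motion $B$, and $\sigma^2 N_s/s^2$ converges in distribution to $\tau_1 := \inf\{t: B_t = 1\}$. Tanaka's formula applied at $\tau_1$ shows that the expected local time at $y \in [0,1)$ of standard BM started at $0$ and killed upon first hitting $1$ equals $2(1-y)$. Integrating this density over $(1-a_2,1-a_1)$ gives the expected BM occupation of that interval before $\tau_1$, and the time-space scaling $(t,x)\mapsto (s^2 t/\sigma^2, sx)$ translates it into the claimed leading-order asymptotic for $H_s(0)/s^2$. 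The ``moreover'' statement follows by the same scaling argument applied to shifted intervals: in the rescaled coordinate $u=(y-s)/s \in (-1,0)$, the occupation density converges uniformly on compacta to a linear function of $u$ with coefficient dictated by the first part of the lemma, and integration against any continuous $f$ on a compact $K\subset(-\infty,0)$ yields the integral formula.

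The principal obstacle is to promote Donsker's weak convergence to $L^1$-convergence of $H_s(0)/s^2$. I would establish the requisite uniform integrability by combining the Wald-type identity for the martingale $(S^{x_1}_k)^2 - k\sigma^2$ stopped at $N_s\wedge T$ with the control on $O_s$ afforded by Proposition \ref{prop:limit3dU/O} and the finiteness of $\E Z_1$ recorded immediately before it (which itself follows from the regularly varying tail estimate of Theorem \ref{the:theorem}). A more direct alternative is to express $H_s$ through a Wiener--Hopf representation as an integral against the ladder-height renewal measure of $(S^{x_1}_k)$ and then apply Blackwell's renewal theorem; the required moment hypothesis $\E Z_1 < \infty$ is already in hand. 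Once the $L^1$-convergence of $H_s(0)/s^2$ is secured, the uniform convergence on compacta of translates of $H_s$ needed for \eqref{eq:vagueConv} follows from equicontinuity arguments in the spirit of Lemma \ref{lemma:continuity:convergence}.
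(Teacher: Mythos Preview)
Your route to \eqref{eq:u22.2} differs from the paper's: the paper gives no argument at all for this formula, simply citing Lemma~5.11 of \cite{laser} (which, judging from your own alternative suggestion, almost certainly proceeds via the ladder-height renewal measure and a renewal-type theorem rather than via any functional limit). The Donsker/Brownian-comparison approach you propose is viable, and you correctly isolate its one real obstacle---upgrading weak convergence of paths to convergence of \emph{expected} occupation times. Your two proposed fixes (Wald plus overshoot control, or Wiener--Hopf plus Blackwell) are only sketched; a cleaner and self-contained way to secure the needed uniform integrability is a second-moment bound: expand $\E[M_s(B_s)^2]$ as a double sum and use the Markov property to obtain $\E[M_s(B_s)^2]\le 2\,\E[M_s(B_s)]\cdot\sup_x H_s(x)$, each factor being $O(s^2)$ by the same Green-function calculation, so the normalised occupation times are $L^2$-bounded. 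You should also be aware that your Tanaka computation only covers $y\in[0,1)$, i.e.\ $a_2<1$; for $a_2>1$ the Brownian Green density on $(-\infty,1)$ is no longer linear, so the Donsker heuristic has to be handled with more care in that range.

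Where your proposal over-reaches is in the ``moreover'' part. The paper's deduction of \eqref{eq:vagueConv} from \eqref{eq:u22.2} is a single sentence: take $[A,B]\subset(-\infty,0)$ with $K\subset(A,B)$; since \eqref{eq:u22.2} gives convergence of the finite measures $s^{-2}\M_s(s\,\cdot)$ on every subinterval of $[A,B]$, these measures converge vaguely on $[A,B]$ (the paper cites \cite[Prop.~7.19]{folland}), and one simply integrates the continuous $f$ against them. There is no need for ``uniform convergence on compacta of translates of $H_s$'' or for any equicontinuity argument, and the analogy with Lemma~\ref{lemma:continuity:convergence} is off target---that lemma handles a family of probabilities indexed by a compact parameter space, whereas here one has a sequence of measures for which \eqref{eq:u22.2} already supplies exactly the interval data that characterises vague convergence.
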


\begin{proof}
 The formula $(\ref{eq:u22.2})$ can be proved just like Lemma 5.11 in \cite{laser}.
For \eqref{eq:vagueConv} first note that there exist $A<B<0$ such that $K\subset (A,B)$.
If we restrict the measures to the interval $[A,B]$ then we can apply  \cite[Prop.~7.19]{folland} to show that $\frac{1}{s^2}\M_s(s\, du)$ converges vaguely to $-\frac{1}{\E X_1^2}u\, du$.
This proves \eqref{eq:vagueConv}.
\end{proof}

\begin{lemma}\label{lemma:aproxProbability}
We have
\begin{align*}
 \lim_{s\to\infty}\lim_{\delta\to 0}\frac{s^{d-2}}{\delta^{d-1}} & \P(S^{x_1}_{N_s-1}\in (s(1-\alpha_2)+\beta_2,s(1-\alpha_1)+\beta_1),Y_s\in B_{\delta}(0))
\\
& =\frac{\pi^{-\frac{d-3}{2}}}{4\Gamma(\frac{d+1}{2})\E X_1^2}\left(\frac{\alpha_1^{-d+2}-\alpha_2^{-d+2}}{d-2}\right).
\end{align*}
 
\end{lemma}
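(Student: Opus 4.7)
My plan is to decompose $\P(S^{x_1}_{N_s-1}\in A, Y_s\in B_\delta(0))$ according to the exit step $k=N_s$. Writing $A = (s(1-\alpha_2)+\beta_2, s(1-\alpha_1)+\beta_1)$ and $u_k = s-S^{x_1}_{k-1}$, I use that the rotation $U^{S^{x_{2,\ldots,d}}_{k-1}}$ preserves $B_\delta(0)$, so $\{Y_s\in B_\delta(0)\}=\{Y_s^0\in B_\delta(0)\}=\{\widetilde{Z}^k_{u_k}\in B_\delta(0)\}$; for $\delta<1$ this event, combined with $S^{x_1}_j\leq s$ for $j<k$, automatically forces $N_s=k$. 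After applying the Markov property at step $k$ and recognizing the resulting sum as an integral against $\M_s$, I obtain
\begin{equation*}
\P(S^{x_1}_{N_s-1}\in A, Y_s\in B_\delta(0)) = \int_{s\alpha_1-\beta_1}^{s\alpha_2-\beta_2}\P(\widetilde{Z}_u\in B_\delta(0))\,\M_s(-du),
\end{equation*}
where $\M_s(-du)$ denotes the pushforward of $\M_s$ under $z\mapsto -z$.

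Next I send $\delta\to 0$. Since $\widetilde{Z}_u$ has a density $f_u$ continuous at the origin, Lebesgue differentiation gives $\P(\widetilde{Z}_u\in B_\delta(0))/\delta^{d-1}\to f_u(0)\,\pi^{(d-1)/2}/\Gamma((d+1)/2)$. For fixed $s$ the range of $u$ is bounded and the ratios are uniformly bounded for $\delta\in(0,1)$, so dominated convergence yields
\begin{equation*}
\lim_{\delta\to 0}\frac{\P(S^{x_1}_{N_s-1}\in A, Y_s\in B_\delta(0))}{\delta^{d-1}} = \frac{\pi^{(d-1)/2}}{\Gamma((d+1)/2)}\int_{s\alpha_1-\beta_1}^{s\alpha_2-\beta_2} f_u(0)\,\M_s(-du).
\end{equation*}

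Now I send $s\to\infty$. Substituting $u=-sw$ and multiplying by $s^{d-2}$ rewrites the integral as $s^{-2}\int s^d f_{-sw}(0)\,\M_s(s\,dw)$ over $w\in(-\alpha_2+\beta_2/s,-\alpha_1+\beta_1/s)$. By Proposition \ref{propo:densityAsymptotics}, $4u^d\pi^{d-2}f_u(0)\to 1$ as $u\to\infty$, and since $-sw\to\infty$ uniformly for $w\in[-\alpha_2,-\alpha_1]$, the rescaled integrand $s^d f_{-sw}(0)$ converges uniformly to $1/(4(-w)^d\pi^{d-2})$ on $[-\alpha_2,-\alpha_1]$. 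Combined with the vague convergence \eqref{eq:vagueConv}, this yields
\begin{equation*}
\lim_{s\to\infty} s^{d-2}\int_{s\alpha_1-\beta_1}^{s\alpha_2-\beta_2}f_u(0)\,\M_s(-du) = -\frac{1}{\E X_1^2}\int_{-\alpha_2}^{-\alpha_1}\frac{w\,dw}{4(-w)^d\pi^{d-2}}.
\end{equation*}
The substitution $v=-w$ evaluates the right side to $(\alpha_1^{-(d-2)}-\alpha_2^{-(d-2)})/(4\pi^{d-2}(d-2)\,\E X_1^2)$, and multiplying by $\pi^{(d-1)/2}/\Gamma((d+1)/2)$ together with $\pi^{(d-1)/2-(d-2)}=\pi^{-(d-3)/2}$ gives the stated constant.

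The main technical obstacle is the interplay between the $s$-dependent integrand $s^d f_{-sw}(0)$ and the vaguely converging measure $s^{-2}\M_s(s\,\cdot)$ in the final step. This is resolved because, for $w$ bounded away from zero, $-sw$ tends to infinity uniformly, so the pointwise asymptotic in Proposition \ref{propo:densityAsymptotics} upgrades to uniform convergence of the integrand on the relevant compact subset of $(-\infty,0)$, enabling direct use of \eqref{eq:vagueConv}.
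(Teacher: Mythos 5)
Your proof is correct and follows the paper's argument essentially step by step: the same decomposition into the measure $\M_s$, the same Lebesgue-differentiation step for $\delta\to 0$ via Proposition~\ref{propo:densityAsymptotics}, the same replacement $f_{-sw}(0)\sim 1/(4(-sw)^d\pi^{d-2})$, and the same appeal to the vague convergence~\eqref{eq:vagueConv} for $s\to\infty$. The only differences are cosmetic (your $\M_s(-du)$ versus the paper's $\M_s(s\,du)$ convention) and that you spell out the uniform-convergence justification needed to pair the $s$-dependent integrand with the vaguely converging measures, which the paper leaves implicit.
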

\begin{proof}
Recall the definition of $\widetilde{Z}_{u,\sig}$ from \eqref{def:ZUSigma}.
 We have 
\begin{align*}
 & \P(S^{x_1}_{N_s-1}-s\in (-\alpha_2s+\beta_2,-\alpha_1s+\beta_1),Y_s\in B_{\delta}(0))\\
&= \sum_{k=1}^{\infty} \P\Big(\widetilde{Z}_{s-S^{x_1}_{k-1},S^{x_{2,\ldots,d}}_{k-1}}\in B_{\delta}(0),S^{x_1}_{k-1}-s\in (-\alpha_2s+\beta_2,-\alpha_1s+\beta_1),\\
& \qquad \qquad\max\{S_1^{x_1},\ldots,S_{k-1}^{x_1}\}<0\Big).
\end{align*}
Using a similar approach as in the proof of Theorem \ref{thm:uniformFromAway} and the fact that $B_{\delta}(0)$ is an invariant set for orthogonal operators, we obtain
 \begin{align*}
 & \P(S^{x_1}_{N_s-1}-s\in (-\alpha_2s+\beta_2,-\alpha_1s+\beta_1),Y_s\in B_{\delta}(0))\\
&=\int^{-\alpha_1+\beta_1/s}_{-\alpha_2+\beta_2/s}\P(\widetilde{Z}_{-h}\in B_{\delta}(0))\sum_{k=1}^{\infty}\P(S^{x_1}_{k-1}-s\in dh,\max\{S_1^{x_1},\ldots,S_{k-1}^{x_1}\}<0)\\
&=\int^{-\alpha_1+\beta_1/s}_{-\alpha_2+\beta_2/s}\P(\widetilde{Z}_{-su}\in B_{\delta}(0))\M_s(s\, du).
\end{align*}
By Proposition \ref{propo:densityAsymptotics}, 
$$\lim_{s\to\infty}\frac{\P(\widetilde{Z}_{-su}\in B_{\delta}(0))}{\lambda^{d-1}(B_{\delta}(0))}=f_{s(-u)}(0,\ldots,0).$$
Hence, using the fact that $\lambda^{d-1}(B_{\delta}(0))=\frac{\pi^{\frac{d-1}{2}}}{\Gamma(\frac{d+1}{2})}\delta^{d-1}$
and Proposition \ref{propo:densityAsymptotics}, we have
for $s\to \infty$,
\begin{align*}
 & \lim_{\delta\to 0}\frac{s^{d-2}}{\delta^{d-1}}\P(S^{x_1}_{N_s-1}-s\in (-s/r_1+\alpha,-s/r_2+\beta),Y_s\in B_{\delta}(0))\\
&=\frac{\pi^{\frac{d-1}{2}}s^{d-2}}{\Gamma(\frac{d+1}{2})}\int^{-\alpha_1+\beta_1/s}_{-\alpha_2+\beta_2/s}f_{s(-u)}(0,\ldots,0)\M_s(s\, du)\\
&\sim\frac{\pi^{\frac{d-1}{2}}s^{d-2}}{\Gamma(\frac{d+1}{2})}\int^{-\alpha_1+\beta_1/s}_{-\alpha_2+\beta_2/s}\frac{1}{4(s(-u))^d\pi^{d-2}}\M_s(s\, du)\\
&=\frac{\pi^{\frac{d-1}{2}}}{\Gamma(\frac{d+1}{2})}\int^{-\alpha_1+\beta_1/s}_{-\alpha_2+\beta_2/s}\frac{1}{4(-u)^d s^2 \pi^{d-2}}\M_s(s\, du).
\end{align*}
By Lemma \ref{u22.2},
\begin{align*}
 \lim_{s\to \infty}\int^{-\alpha_1+\beta_1/s}_{-\alpha_2+\beta_2/s}\frac{1}{4(-u)^d}\frac{\M_s(s\, du)}{s^2}
&=\int^{-\alpha_1}_{-\alpha_2}\frac{1}{4(-u)^d}\cdot \frac{-u}{\E X_1^2}\, du\\
&= \frac{1}{4\E X_1^2}\left(\frac{\alpha_1^{-d+2}-\alpha_2^{-d+2}}{d-2}\right).
\end{align*}
The lemma follows from the last two displayed formulas.
\end{proof}

\begin{proof}[Proof of Theorem \ref{thm:u20_1}]

It is elementary to check that if $y = (y_1, y_2, \dots, y_d)= (y_1, y_{2,\ldots, d}) \in \R^d$, 
$\|y\|\leq 1$, $0<\delta<1$, and $x\leq s$ then the following conditions are equivalent,
\begin{align}\label{j22.1}
& \frac{(s,\delta y_{2,\ldots, d})-(x,0,\ldots, 0,-1)}
{\|(s,\delta y_{2,\ldots, d})-(x,0,\ldots, 0,-1)\|}
\in \cB\left( \frac{r_2}{s}\right) \setminus \cB\left( \frac{r_1}{s}\right), \\
&x\in \left[s-\|\delta y_{2,\ldots, d}-(0,\ldots, 0,-1)\|\sqrt{\frac{s^2}{r_2^2}-1},s-\|\delta y_{2,\ldots, d}-(0,\ldots, 0,-1)\|\sqrt{\frac{s^2}{r_1^2}-1}\right]. \label{j22.2}
\end{align}

By Lemma \ref{prop:markovChainRandomWalk} the random vector $S_{N_s-1}^{x_{2\ldots d}}$ is uniformly distributed on $\Sph$ and is independent of the process 
$(S_{k}^{x_1})$. Therefore, we have  
\begin{align}\label{j22.3}
&\P\left(\bv_s \in \cB\left( \frac{r_2}{s}\right) \setminus \cB\left( \frac{r_1}{s}\right) , Y_s \in B_\delta(0)\right)\\
&=\P\left(\bv_s \in \cB\left( \frac{r_2}{s}\right) \setminus \cB\left( \frac{r_1}{s}\right) , Y_s \in B_\delta(0)\, |\, S_{N_s-1}^{x_{2\ldots d}}=(0,\ldots,0,-1)\right)\nonumber\\
&=\P\Bigg(\frac{(s,Y_s)-(S_{N_s-1},0,\ldots,-1)}{\|(s-S_{N_s-1}^{x_1},Y_s-(0,\ldots,0,-1))\|} \in \cB\left( \frac{r_2}{s}\right) \setminus \cB\left( \frac{r_1}{s}\right) ,\nonumber \\
&\qquad\qquad Y_s \in B_\delta(0)\, |\, S_{N_s-1}^{x_{2\ldots d}}=(0,\ldots,0,-1)
\Bigg).\nonumber
\end{align}
We use the substitution $x=S_{N_s-1}$ and $y_{2,\ldots, d}=Y_s/\delta$ in \eqref{j22.1}-\eqref{j22.2} to see that 
\begin{align}\label{j22.4}
A_s^{\delta}:=&{\textstyle \left\{\frac{(s,Y_s)-(S_{N_s-1},0,\ldots,-1)}{\|(s-S_{N_s-1}^{x_1},Y_s-(0,\ldots,0,-1))\|} \in \cB\left( \frac{r_2}{s}\right) \setminus \cB\left( \frac{r_1}{s}\right), Y_s \in B_\delta(0)\right\}}\\
=&{\textstyle \left\{S_{N_s-2}\in \left[s-\|Y_s-(0,\ldots, 0,-1)\|\sqrt{\frac{s^2}{r_2^2}-1},\right.\right.} \nonumber\\
&\qquad {\textstyle \left.\left. s-\| Y_s-(0,\ldots, 0,-1)\|\sqrt{\frac{s^2}{r_1^2}-1}\right], Y_s\in B_{\delta}(0)\right\}.}\nonumber
\end{align}
If $Y_s\in B_{\delta}(0)$ and $\delta \to 0$ then
 $\|Y_s-(0,\ldots, 0,-1)\|\to 1$. Hence, for large $s$ and small $\delta$, 
$$s-\frac{s}{r_j}-2\leq s-\|Y_s-(0,\ldots, 0,1)\|\sqrt{\frac{s^2}{r_j^2}-1}\leq s-\frac{s}{r_j}+2.$$
Let
$$A_s^{\delta\pm}=\left(S_{N_s-1}\in \left[s\left(1-\frac{1}{r_1}\right)\mp 2,s\left(1-\frac{1}{r_2}\right)\pm 2\right],Y_s\in B_{\delta}(0)\right),$$
and note that  $A^{\delta+}_s\supset A^{\delta}_s \supset A^{\delta-}_s$
for large $s$ and small $\delta$.
By Lemma \ref{lemma:aproxProbability} 
$$\lim_{s\to\infty}\lim_{\delta\to 0}\frac{s^{d-2}}{\delta^{d-1}}\P(A^{\delta+}_s)=\lim_{s\to\infty}\lim_{\delta\to 0}\frac{s^{d-2}}{\delta^{d-1}}\P(A^{\delta-}_s)
=\frac{\pi^{-\frac{d-3}{2}}}{4\Gamma(\frac{d+1}{2})\E X_1^2}\left(\frac{r_2^{d-2}-r_1^{d-2}}{d-2}\right).$$
Combining this with \eqref{j22.3}-\eqref{j22.4} proves the theorem.
\end{proof}

\section*{Acknowledgments}
The authors would like to thank Sara Billey for very helpful advice.
The second author is grateful to Microsoft Corporation for the allowance on Azure where the simulation illustrated in Figure \ref{simulation} was performed. We are grateful to the anonymous referee for many suggestions for improvement.

\bibliographystyle{alpha}

\bibliography{refs_laser1}

\begin{thebibliography}{CPSV10b}

\bibitem[ABS13]{ABS}
Omer Angel, Krzysztof Burdzy, and Scott Sheffield.
\newblock Deterministic approximations of random reflectors.
\newblock {\em Trans. Amer. Math. Soc.}, 365(12):6367--6383, 2013.

\bibitem[AG80]{arcsine}
Barry~C. Arnold and Richard~A. Groeneveld.
\newblock Some properties of the arcsine distribution.
\newblock {\em J. Amer. Statist. Assoc.}, 75(369):173--175, 1980.

\bibitem[BGT87]{regularVariation}
N.~H. Bingham, C.~M. Goldie, and J.~L. Teugels.
\newblock {\em Regular variation}, volume~27 of {\em Encyclopedia of
  Mathematics and its Applications}.
\newblock Cambridge University Press, Cambridge, 1987.

\bibitem[BT16]{laser}
Krzysztof Burdzy and Tvrtko Tadi\'c.
\newblock Can one make a laser out of cardboard?
\newblock 2016.
\newblock to appear in {\it Ann. Appl. Probab.}, Arxiv:1507.00961.

\bibitem[CPSV09]{comets1}
Francis Comets, Serguei Popov, Gunter~M. Sch{\"u}tz, and Marina Vachkovskaia.
\newblock Billiards in a general domain with random reflections.
\newblock {\em Arch. Ration. Mech. Anal.}, 191(3):497--537, 2009.

\bibitem[CPSV10a]{comets2}
Francis Comets, Serguei Popov, Gunter~M. Sch{\"u}tz, and Marina Vachkovskaia.
\newblock Knudsen gas in a finite random tube: transport diffusion and first
  passage properties.
\newblock {\em J. Stat. Phys.}, 140(5):948--984, 2010.

\bibitem[CPSV10b]{comets3}
Francis Comets, Serguei Popov, Gunter~M. Sch{\"u}tz, and Marina Vachkovskaia.
\newblock Quenched invariance principle for the {K}nudsen stochastic billiard
  in a random tube.
\newblock {\em Ann. Probab.}, 38(3):1019--1061, 2010.

\bibitem[Don80]{doney}
R.~A. Doney.
\newblock Moments of ladder heights in random walks.
\newblock {\em J. Appl. Probab.}, 17(1):248--252, 1980.

\bibitem[Eva01]{evans}
Steven~N. Evans.
\newblock Stochastic billiards on general tables.
\newblock {\em Ann. Appl. Probab.}, 11(2):419--437, 2001.

\bibitem[Fol99]{folland}
Gerald~B. Folland.
\newblock {\em Real analysis}.
\newblock Pure and Applied Mathematics (New York). John Wiley \& Sons, Inc.,
  New York, second edition, 1999.
\newblock Modern techniques and their applications, A Wiley-Interscience
  Publication.

\bibitem[Ver77]{veraveb}
N.~Veraverbeke.
\newblock Asymptotic behaviour of {W}iener-{H}opf factors of a random walk.
\newblock {\em Stochastic Processes Appl.}, 5(1):27--37, 1977.

\end{thebibliography}

\end{document}